\documentclass[a4paper]{scrarticle}
\usepackage{amscd,amssymb,amsmath,amsthm}
\usepackage[utf8]{inputenc}
\usepackage[greek,english]{babel}
\DeclareUnicodeCharacter{2207}{$\nabla$}
\DeclareUnicodeCharacter{03C6}{$\varphi$}
\DeclareUnicodeCharacter{03D5}{$\phi$}
\usepackage{graphicx}
\usepackage{booktabs}
\usepackage{color}
\usepackage[lofdepth,lotdepth]{subfig}
\usepackage[backend=bibtex,style=ieee,giveninits=true,sorting=anyt]{biblatex}
\usepackage{hyperref}
\usepackage{todonotes}
\addbibresource{HenningBib.bib}
\newcommand{\Z}{\mathbb{Z}}
\newtheorem{thm}{Theorem}
\newtheorem{defn}{Definition}
\newtheorem{lemma}{Lemma}
\newtheorem{pro}{Proposition}
\newtheorem{rk}{Remark}

\hyphenation{mani-fold}
\DeclareMathOperator\arcosh{arcosh}
\newcommand{\footnoteremember}[2]{
\footnote{#2}
\newcounter{#1}
\setcounter{#1}{\value{footnote}}
}
\newcommand{\footnoterecall}[1]{
\footnotemark[\value{#1}]
}

\begin{document}
\title{Existence of gradient Gibbs measures on regular trees which are not translation invariant} 
\author{Florian Henning\footnoteremember{RUB}{Ruhr-Universit\"at Bochum, Fakult\"at f\"ur Mathematik, D44801 Bochum, Germany} \footnote{Florian.Henning@ruhr-uni-bochum.de  }
	\and Christof K\"ulske \footnoterecall{RUB} \footnote{Christof.Kuelske@ruhr-uni-bochum.de, 
		\newline
		\url{https://www.ruhr-uni-bochum.de/ffm/Lehrstuehle/Kuelske/kuelske.html}}\, 
	\,  
}
		
		\maketitle
	
\begin{abstract} We provide an existence theory for gradient Gibbs measures for 
$\Z$-valued spin models on regular trees which are not invariant under translations of the tree, assuming only summability 
of the transfer operator. 
The gradient states we obtain are delocalized. 
The construction we provide for them starts from a two-layer hidden Markov model representation in a setup which is not invariant under tree-automorphisms,   
involving internal $q$-spin models.  
The proofs of existence and lack of translation invariance of infinite-volume gradient states are based on properties of the local pseudo-unstable manifold of the corresponding discrete dynamical systems of these internal models, around the free state, at large $q$. 
\end{abstract}

\textbf{Key words:}  Gibbs measures, gradient Gibbs measures, 
regular tree, \\ boundary law, heavy tails, stable manifold theorem.

\section{Introduction}	
The question whether statistical mechanics models with translation-invariant 
interactions allow infinite-volume states which are not translation invariant has a long history. 
A famous example which shows that this is possible are the Dobrushin-states 
for the Ising model in zero external field on the integer lattice $\Z^d$, in dimensions
$d\geq 3$. They can be 
obtained with plus/minus boundary conditions on the upper/lower half of a sequence of cubes. 
At sufficiently low temperatures they break translation invariance, see \cite{Do73}
and also \cite{BrLePf79}. 
By contrast, in low lattice dimensions $d\leq 2$ such states do not exist in the Ising model,  for, all Gibbs states of the Ising model are necessarily translation-invariant, 
see \cite{Ai80},\cite{Hi81},\cite{CoVe12} and also \cite{CovELNRu18}.

More generally speaking, this is just an example for the following type of question: Consider an equilibrium statistical mechanics model 
on a regular graph, with a graph-automorphism invariant Hamiltonian (specification).  
When are there Gibbs measures for this Hamiltonian which break some 
symmetries of the underlying graph?   

We will be interested in the following in the case where the graph is a regular tree. 
Let us therefore start by recalling what is known for the ferromagnetic Ising model in zero external field: 
In that case there are even uncountably many automorphism non 
invariant Gibbs measures in the full low-temperature region 
(Theorem 12.31 in \cite{Ge11}). 
This region is equivalently described as the region for which 
$\mu^+\neq \mu^-$ where the latter states are the graph-automorphism invariant measures
obtained as finite-volume limits with homogeneous 
plus respectively minus boundary conditions.   
Nonhomogeneous states of different type exist in regions of even lower temperatures, see \cite{GaMaRuSh20} and also \cite{Ro13}.

In this paper we plan to investigate the analogous question 
for integer-valued \textit{gradient models} on regular trees, which are described 
in terms of a spatially homogeneous nearest neighbor interaction potential 
\begin{equation*}U:\Z \rightarrow \mathbb{R} \end{equation*}
which assigns the energetic contribution $U(\omega_x-\omega_y)$ associated to an 
edge with endpoints $x,y$ which is felt by an integer-valued spin configuration. 
For such gradient models we want to ask specifically 
for the possibility or impossibility of existence of
automorphism non invariant \textit{gradient states}.
Similarly to the situation on the lattice (\cite{FS97},\cite{Sh05}) the Gibbs property of these gradient states means compatibility with the kernels of the \textit{Gibbsian gradient specification} associated with the height-shift invariant interaction potential.

Note that gradient states, as distributions on increments of the variables,  
may exist in regimes where Gibbs states do not exist. This is the case in the class 
of translation-invariant (t.i.) states for convex nearest neighbor potentials for real variables 
on the two-dimensional integer lattice \cite{FS97}. 
Regarding the $d$-regular tree (\textit{Cayley tree}), by the term \textit{translation} we mean a tree-automorphism which is obtained by considering the tree as a group acting on itself (see Section \ref{subsec: BasicDef} for the precise definition).

So, we stress that the question about automorphism non invariant (or non-t.i.)
gradient states  has to be distinguished from the analogous question about
automorphism non invariant (or non-t.i.) Gibbs states.  

For more on gradient models on the lattice with homogeneous interactions we refer to \cite{CoDeuMu09}, \cite{DeuGiIo00}, \cite{Sh05}, \cite{BiSp11} and \cite{BoCiKu17}. 
For more on gradient models on the lattice in random environments, see \cite{BK94}, \cite{EnKu08}, \cite{CKu12},\cite{DaHaPe21}.

What is known on trees for non-trivial  
automorphism-invariant gradient states that is those with dependent increments, which do not arise as projections to the increments of the variables in any Gibbs measure? In recent work  \cite{HKLR19}, \cite{HK21} 
the existence of such states which are different from the free state, was investigated on the $d$-regular tree, $d \geq 2$. 
The authors found conditions on the transfer operator \begin{equation*}Q=e^{-\beta U},\end{equation*} formulated in terms 
of smallness of the pair of the $\frac{d+1}{2}$- and the $d+1$-norm of $Q$ on $\Z$, which ensured existence of gradient Gibbs measures (GGM) different from the free state with i.i.d.-increments. Here, the choice of norms appeared from a contraction argument based on Young's inequality for convolutions. The established condition on the smallness of $p$-norms was fullfilled when the inverse temperature $\beta$ was chosen sufficiently large. In particular, a $\frac{\log d}{d}$-asymptotics for $\beta$ as $d$ tends to infinity was proved.
\subsection{Main statement}
In the present paper we prove in full generality the following main result which informally reads: 

\textit{Gradient models for any summable strictly positive transfer operator $Q$ always possess 
non-t.i. gradient states, see Theorem \ref{thm: main}}. 

We stress that our theorem makes no assumption on 
convexity or monotonicity of the interaction, only summability and $Q(i)>0$ for all $i \in \mathbb{Z}$. 
As there is always the free state of i.i.d.- increments, which is automorphism invariant, this implies that there is 
never uniqueness of GGMs for any fixed summable $Q$, 
which seems surprising indeed at first look.  

How can one expect such a  very general existence result to hold, 
	and how to prove it, 
	without any further properties of $Q$? 
\subsection{Ideas of the proof}
The framework of the proof is the construction of integer-valued gradient Gibbs measures from the Gibbs measures of underlying $q$-spin \textit{clock models} which have a discrete rotational symmetry. Here, $q=2,3,\ldots$ takes the meaning of a period. This construction, outlined in Section \ref{subsubsec: fuzzyTogradient}, is done in terms of a two-step procedure, where for any realization of the underlying $Z_q$-valued Gibbs measure integer-valued increments are edge-wise independently sampled using an appropriate stochastic kernel built from the interaction potential (see Theorem \ref{thm: DLR-eq}). The relation between $q$-spin Gibbs measures and integer-valued gradient Gibbs measures generalizes an earlier result of \cite{HK21}, Section 3.1, which was restricted to the case of tree-automorphism invariant gradient Gibbs measures. More precisely, in \cite{HK21}, for the proof of the Gibbs property of the constructed gradient measures, we referred to \cite{KS17} where the same class of tree-automorphism invariant gradient Gibbs measures was obtained in terms of a different construction based on mixing over pinned gradient measures. The proof of the Gibbs property we give in Theorem \ref{thm: DLR-eq} of this paper is shorter, more straightforward, and also covers the case of gradient Gibbs measures which are not invariant under the automorphisms of the tree.
%
We describe in the following Subsection \ref{subsec: delocalization} 
why GGMs obtained as images in this way are necessarily 
delocalized (i.e. they do not stem from a Gibbs measure of the initial model.

In the second main part we investigate these internal $q$-spin models at fixed $q$. This is done via the description of Gibbs measures in terms of 
\textit{boundary law solutions}, based on Zachary's theory \cite{Z83}.
We look specifically for solutions which are not translation invariant but radially symmetric
(around an arbitrary root). 
The corresponding boundary law formalism can be naturally interpreted 
in terms of a  $q$-indexed family of discrete-time dynamical system 
\begin{equation*}
S_q:\Delta^q\rightarrow \Delta^q
\end{equation*}
on the simplex $\Delta^q$, see the equations \eqref{eq: RecIn} and \eqref{eq: RecAway}, where 
the non-linear map $S_q$ 
to be analyzed  depends on the interaction $Q$ and the period $q$.

\subsubsection*{Backwards trajectories, stability analysis, and GGMs.}
Automor\-phism non invariant states of the $q$-clock models are then obtained via their correspondence to backwards trajectories of the map $S_q$.
The difficulty with this argument is that such infinite backwards iterates may not always exist, 
for arbitrary model parameters, periods, and initial values. This turns the existence problem 
for such GGMs in general highly nontrivial. Indeed, a general understanding of these dynamical systems, structures of fixed points,  
and their bifurcations in their dependence on $Q$, 
other than for very small values of $q$, poses challenging model-dependent tasks. For specific work on aspects of inhomogeneity
of solutions for Ising and Potts models, see 
 \cite{GaMaRuSh20}, \cite{PeSt99}, \cite{BiEnEn17}, \cite{PePe10} and \cite{Sl11}.		
It turns out {to }be very fruitful for our model-independent approach to focus on an important common property 
of the maps $S_q$, shared by all gradient models for summable $Q$, and periods $q$.   
Indeed, for any period $q$, 
in any parameter regime there is always the free state, which for any $q$ corresponds to the trivial 
fixed point of $S_q$ provided by the equidistribution. The crucial point is, 
that depending on $q$, at fixed $Q$, the stability  
properties of this fixed point change, 
and this has useful consequences for the existence of automorphism non invariant states. Namely, we show that for any summable 
$Q$, when the  period $q\geq q_0(Q)$ is large enough, then  
there is an unstable manifold of 
positive dimension of the map $S_q$ around the equidistribution. In this case 
non-trivial backwards trajectories are obtained from starting points on this 
unstable manifold away from the fixed point, 
and they yield the desired states. 

There is a part of the argument where we need care to be able to deal also 
with the exceptional cases of non-hyperbolicity (which may occur 
at exceptional parameter values), and this is where we invoke the pseudo-unstable 
manifold theorem of \cite{Ch02}. This yields a countable family of distinct non-trivial measures indexed by $q$, see Proposition \ref{pro: Id}.
		
By contrast to this general existence theorem, obtained for large enough periods $q$, 
the other interesting related question after existence of non-trivial solutions 
at fixed periods $q$ requires specific properties of the spectrum of the transfer 
operator $Q$, see Theorem \ref{thm: existence}. Moreover, starting from Dobrushin's uniqueness Theorem we show that at fixed $q$ these non-trivial solutions indeed fail to exist at sufficiently high temperatures, see Proposition \ref{pro: smallq}.
For a quantitative discussion of this in the context of the SOS-model as well as for a particular polynomially decaying transfer operator, see Theorem \ref{thm: EB} in Section \ref{sec: Applications}. 
		
\subsubsection*{Proving lack of translation invariance of the associated GGMs.}
To complete the proof of existence of GGMs which are not invariant under translations we need in a final step 
to understand how to get from a backwards trajectory of $S_q$ to the associated GGM we are 
finally interested in.
We show via a local argument beyond linearization 
on the local pseudo-unstable manifold  (see Section \ref{Sec: non-translation})
that the automorphism non invariance 
of the radially symmetric $q$-periodic boundary law solutions we construct 
really survives the map to the corresponding gradient state.  
In fact, we obtain even non-\textit{translation-invariance} when the Cayley tree is viewed as group acting 
on itself,  which is stronger. 

The paper is organized as follows. 
Section \ref{sec: Def} contains the definitions of the model, and of the basic notions 
of Gibbs measures, gradient Gibbs measures, and tree-indexed Markov chains. 
Section \ref{sec: constr_GGM} describes the map from period-$q$ boundary
laws to GGMs, allowing for inhomogeneity.  
Section \ref{Sec: Existence theory for non-invariant GGMs} contains our existence results for non-invariant GGMs, 
which are explained in terms of backwards trajectories of the maps $S_q$.
Section \ref{sec: Applications} illustrates the theory for two prototypical models.

Finally, the proofs are given in Section \ref{sec: Proofs}.
\subsection*{Acknowledgements} 
Florian Henning is partially supported by the Research Training Group 2131 \textit{High-dimensional phenomena in probability-Fluctuations and discontinuity} of German Research Council (DFG).
\section{Definitions} \label{sec: Def}
\subsection{Height configurations, Markov chains and translations on the Cayley tree}\label{subsec: BasicDef}
We consider models on the Cayley-tree $\Gamma^d=(V,L)$ of order $d\geq2$ with the integers $\Z$ as local state space and denote the set of \textit{height-configurations} $\Z^V$ by $\Omega$. Let $\Z$ be equipped with the $\sigma$-algebra given by its power set and for any $\Lambda \subset V$ let $\sigma_\Lambda:\Z^V \rightarrow \Z^\Lambda, (\omega_x)_{x \in V} \mapsto (\omega_x)_{x \in \Lambda} $ denote the coordinate spin projection to the spins inside $\Lambda$. Then we consider the measurable space $(\Omega,\mathcal{F})$ where $\mathcal{F}:=\sigma(\sigma_{\{x\}} \mid x \in V)$ is the product-$\sigma$-algebra. For any subvolume $\Lambda \subset V$ we denote by $\mathcal{F}_\Lambda:=\sigma(\sigma_{\{x\}} \mid x \in \Lambda )$ the $\sigma$-algebra on $\Omega$ generated by the spins inside the volume $\Lambda$.

The term \textit{Cayley tree} (or $d$-regular tree) means a connected graph without cycles where each vertex has exactly $d+1$ nearest neighbors.
We call two vertices $x,y \in V$ \textit{nearest neighbors} if they are connected by an edge $b=\{x,y\} \in L$.

A collection of $n$ edges $\{x,x_1\},\{x_1,x_2\}, \ldots, \{x_{n-1},y\}$ is called a \textit{path} (of length $n$) from $x$ to $y$, whereas an infinite collection of nearest neighbor pairs will be called an \textit{infinite path}.
For two vertices $x,y$ the distance $d(x,y)$ is defined as the length of the shortest path from $x$ to $y$.
Besides the set of unoriented edges $L$ which contains two-element subsets of $V$ we also consider the set $\vec{L}$ of oriented edges which contains ordered pairs of vertices. Hence, $(x,y) \in \vec{L}$ denotes an oriented edge, while $\{x,y\} \in L$ denotes the respective unoriented edge, which we notationally distinguish to emphasize at which steps orientation of an edge is relevant.
If we restrict to a connected subset $\Lambda \subset V$ and set $L_\Lambda:=\{ \{x,y\} \in L \mid x,y \in \Lambda \}$ then $(\Lambda, L_\Lambda)$ is the subtree of $\Gamma$ with vertices inside $\Lambda$. Similarly, $\vec{L}_\Lambda:=\{ (x,y) \in \vec{L} \mid x,y \in \Lambda \}$ for any subset $\Lambda \subset V$.

Furthermore, for any $\Lambda \subset V$ we define its outer boundary by
\begin{equation*}
\partial \Lambda := \{ x \notin \Lambda : d(x,y) = 1 \mbox{ for some } y \in \Lambda\}.
\end{equation*}
If $\Lambda \subset V$ is finite then we write $\Lambda \Subset V$.
As outlined in Chapter 1.2 of \cite{Ro13}, the Cayley tree $\Gamma^d=(V,L)$ of order $d$ as a planar graph can be represented by the free product $G_d$ of $d+1$ cyclic groups of second order (i.e. groups which contain exactly two elements). Every element in $G_d$ is a finite word of symbols where each two adjacent symbols are from different groups. 

The group representation of $V$ is then obtained as follows: Fix any root $\rho \in V$. Then $\rho$ is represented by the unit $e \in G_d$. The $d+1$ nearest neighbors are enumerated counter-clockwise by the symbols $a_1,\ldots a_{d+1}$. Now for any $n \geq 1$ let $v_n$ be any vertex at distance $n$ to the root. Then $v_n$ has a unique nearest neighbor $v_{n-1}$ lying on the shortest path from $\rho$ to $v_n$. $v_{n-1}$ is represented by a word of length $n-1$. Let its rightmost symbol be $a_j$. Then the group representation of $v_n$ is obtained by adding a symbol different from $a_j$ on the right. Adding the symbol $a_j$ again one gets back to $v_{n-2}$. This enumeration of the $d+1$ nearest neighbors to $v_{n-1}$ in terms of the symbols $a_1,\ldots,a_{d+1}$ conveniently corresponds to the counter-clockwise ordering in a planar embedding of the graph.
Two words are multiplied by concatenation and reduction.

A useful concept in the context of \textit{tree-indexed Markov chains} is the notion of \textit{future} and \textit{past} of a vertex which can be also found in chapter 12 of \cite{Ge11}.
Given any vertex $v \in V$, we write
\begin{equation}
\vec{L}^v:=\{(x,y) \in \vec{L} \mid d(v,x)=d(v,y)+1\}
\end{equation}
for the set of edges pointing towards $v$ and
\begin{equation}
{}^v\vec{L}:=\{(x,y) \in \vec{L} \mid d(v,y)=d(x,v)+1\}
\end{equation}
for the set of edges pointing away from $v$.
Then 
\begin{equation}
\begin{split}
]-\infty,xy[ \ &:= \ \{v \in V \mid (x,y) \in {}^v\vec{L}\}
\end{split}
\end{equation}
denotes the past of the oriented edge $(x,y) \in \vec{L}.$
Using this notation, a \textit{tree-indexed Markov chain} on $(\Omega, \mathcal{F})$ is a probability measure $\mu$ such that for any $(x,y) \in \vec{L}$
\begin{equation*}
\mu(\sigma_{y}=i \mid  \mathcal{F}_{]-\infty,xy[})=\mu(\sigma_{y}=i \mid \mathcal{F}_x) \quad \mu-\text{a.s.}
\end{equation*}
\subsection{Gibbs measures and gradient Gibbs measures}
On the space of height-configurations we consider a symmetric nearest-neighbor interaction potential $\Phi$ with corresponding \textit{transfer operator} $Q$ defined by 

\begin{equation*}
Q_b(\zeta):=\exp\left(-\Phi_b(\zeta) \right)
\end{equation*}
for any edge $b=\{x,y\} \in L$ and $\zeta \in \mathbb{Z}^b$.

The kernels of the Gibbsian specification $(\gamma_\Lambda)_{\Lambda \Subset V}$ then read 
\begin{equation} \label{Def: Gibbs specification}
\gamma_\Lambda(\sigma_\Lambda=\omega_\Lambda \mid \omega)=Z_\Lambda(\omega_{\partial \Lambda})^{-1} \prod_{b \cap \Lambda \neq \emptyset}Q_b(\omega_b).
\end{equation}
A \textit{Gibbs measure} for the specification $\gamma$ is a probability measure $\mu$ on $(\Omega,\mathcal{F})$ such that for all finite $\Lambda \subset V$ and any $A \in \mathcal{F}$
\begin{equation}
\mu(A \mid \mathcal{F}_{\Lambda^c})=\gamma_\Lambda(A \mid \cdot ) \ \mu\text{-a.s.}	
\end{equation}
This is equivalent to $\mu \gamma_\Lambda=\mu$ for any finite $\Lambda \subset V$.

We denote the set of Gibbs measures by $\mathcal{G}(\gamma)$.

In this paper we focus on the special case of symmetric gradient interactions, i.e. for any edge $b=\{x,y \} \in L$
\begin{equation} \label{Def: GradPot}
Q_b(\omega_x,\omega_y)=Q_b(\omega_x-\omega_y)=\exp(-\beta U_b(\omega_x-\omega_y)),
\end{equation}
where the parameter $\beta>0$ will be regarded as inverse temperature and each $U_b: \Z \rightarrow [0,\infty)$ is a symmetric function.

We are insterested in the particular case of measures which are invariant under joint translations of the local state space $\Z$.
Given any height-configuration $\omega=(\omega_x)_{x \in V}$ we define the respective \textit{gradient configuration} $\nabla \omega \in \mathbb{Z}^{\vec{L}}$ by setting $(\nabla \omega)_{(x,y)}:=\omega_y-\omega_x$ for any edge $(x,y) \in \vec{L}$. Clearly, \begin{equation}\label{eq: GradientConstraint}(\nabla \omega)_{(x,y)}=-(\nabla \omega)_{(y,x)} \text{ for all } (x,y) \in \vec{L}. \end{equation} In the other direction, from connectedness of the tree and absence of loops it follows that a given gradient configuration satisfying the symmetry constraint \eqref{eq: GradientConstraint} and prescription of the height at a fixed vertex defines a unique height-configuration.
Hence the set  \[\Omega^{\nabla}=\{(\zeta_{(x,y)})_{(x,y) \in \vec{L}} \mid \zeta_{(x,y)}=-\zeta_{(y,x)} \text{ for all } (x,y) \in \vec{L} \}\] of gradient configurations bijectively corresponds to the quotient $\Z^V / \Z$, the set of relative heights.
For any subset $\Lambda \subset V$ we denote by $\eta_\Lambda: \Omega^\nabla \rightarrow \Z^{\vec{L}_\Lambda}$ the gradient spin projection to edges with both vertices inside the volume $\Lambda$. Equip $\Omega^\nabla$ with the product-$\sigma$-algebra $\mathcal{F}^\nabla=\sigma(\eta_{(x,y)} \mid (x,y) \in \vec{L})$. For any $\Lambda$, we set $\mathcal{F}_\Lambda^\nabla=\sigma(\eta_{(x,y)} \mid (x,y) \in \vec{L}_\Lambda)$.
 By construction, for any finite connected $\Lambda \subset V$ the $\sigma$-algebra $\mathcal{F}_\Lambda^\nabla$ can be identified with the set of all events in $\mathcal{F}_\Lambda$ which are invariant under joint height-shift of all spins.

In this paper we are interested in probability measures on the space of gradient configurations which are Gibbs in the sense that they are invariant under the respective gradient configuration for the transfer operator $Q$.
We note that due to the absence of cycles the complement of any (finite) subtree $(\Lambda,L_\Lambda)$ of $\Gamma$ decomposes into distinct connected components. This means that information on the gradients outside of $\Lambda$ does not determine a relative height-configuration on $\Lambda^c$, by which we understand an element of $\Z^{\Lambda^c} / \Z$. Hence an event $A \in \mathcal{F}_{\Lambda^c}$ which is invariant under joint height-shift at all sites is in general not measurable with respect to $\mathcal{F}^\nabla_{\Lambda^c}$.
Therefore we will introduce a further \textit{outer} $\sigma$-algebra $\mathcal{T}^\nabla_\Lambda$ which incorporates both the gradients outside the subtree $(\Lambda, L_\Lambda)$ and the relative heights at the boundary, by which we understand an element of $\mathbb{Z}^{\partial \Lambda} / \mathbb{Z}$.
More precisely, if we fix any $(\Lambda,L_\Lambda)$, any vertex $x \in \partial \Lambda$ and any absolute height $\omega_x \in \Z$ then any gradient configuration $\zeta \in \Omega^{\nabla}$ gives rise to a unique height configuration on $\Z^{\partial \Lambda}$ which depends only on the values of the gradient spin variables inside $\Lambda \cup \partial \Lambda$. This follows from connectedness of the subtree  $(\Lambda, L_\Lambda)$. Hence we obtain an $\mathcal{F}^\nabla_{\Lambda \cup \partial \Lambda}$-measurable function $[\eta]_\Lambda:\Omega^\nabla \rightarrow \Z^{\partial \Lambda} /\Z$. Here, the set $\Z^{\partial \Lambda}$ is endowed with the product-$\sigma$-algebra and $\Z^{\partial \Lambda} /\Z$ is endowed with the $\sigma$-algebra generated by the projection.
Then $\mathcal{T}^\nabla_\Lambda$ is given by
\begin{equation}\label{eq: OuterAlg}
\mathcal{T}^\nabla_\Lambda=\sigma((\eta_{(x,y)})_{(x,y) \in {\vec{L}_{\Lambda^c}}}, \ [\eta]_\Lambda).
\end{equation}
Now the gradient Gibbs specification $(\gamma'_\Lambda)_{\Lambda \Subset V}$ associated to a Gibbsian specification $(\gamma_\Lambda)_{\Lambda \Subset V}$ is defined as follows (see \cite{KS17}):
\begin{defn}\label{def: GradientGibbs}
	Consider the outer $\sigma$-algebra $\mathcal{T}^\nabla_\Lambda$ (see \eqref{eq: OuterAlg}) 
	Then the \textit{gradient Gibbs specification}
	is defined as the family of probability kernels $(\gamma'_\Lambda)_{\Lambda \Subset V}$ from $(\Omega^\nabla, \mathcal{T}_\Lambda^\nabla)$ to $(\Omega^\nabla, \mathcal{F}^\nabla)$ given by
	\begin{equation}\label{grad}
	\int F(\rho) \gamma'_\Lambda(\text{d}\rho \mid \zeta) = \int F(\nabla \varphi) \gamma_\Lambda(\text{d}\varphi\mid\omega)
	\end{equation}
	for all bounded $\mathcal{F}^{\nabla}$-measurable functions $F$, where $\omega \in \Omega$ is any height-configuration with $\nabla \omega = \zeta$.
\end{defn}
Having defined the gradient specification we conclude with giving the definition of a \textit{gradient Gibbs measure} (see \cite{KS17}).
\begin{defn} A measure $\nu \in \mathcal{M}_1(\Omega^\nabla)$ is called a \textit{gradient Gibbs measure (GGM)} if it satisfies the DLR equation 
\begin{equation}\label{eq: DLR}
\int \nu (d\zeta)F(\zeta)=\int \nu (\text{d}\zeta) \int \gamma'_{\Lambda}(\text{d}\tilde\zeta \mid \zeta) F(\tilde\zeta)
\end{equation}
for every finite $\Lambda \subset V$ and for all bounded continuous functions $F$ on $\Omega^\nabla$.  
\end{defn}
This is equivalent to
\begin{equation}
\nu(A \mid \mathcal{T}^\nabla_\Lambda)=\gamma^\prime_\Lambda(A \mid \cdot) \quad \nu \text{-a.s.}
\end{equation}
for all  $A \in \mathcal{F}^\nabla$ and all finite $\Lambda \subset V$.
\section{Two-layer hidden Markov model construction in automorphism non invariant setup} \label{sec: constr_GGM}
In this section we give a general construction for gradient Gibbs measures.
These gradient Gibbs measures will be constructed from (possibly spatially inhomogeneous) height-periodic functions satisfying an appropriate version of Zachary's \cite{Z83} boundary law equation. The results of this section are a generalization of the homogeneous case considered in \cite{KS17} and \cite{HK21}, where the result of \cite{KS17} is restricted to the construction of automorphism-invariant GGMs.  
Note that, while the rest of this paper is focused on spatially homogeneous interaction potentials, in this section we regard the interaction potential (the transfer operator, respectively) as a possibly spatially dependent object, as it allows to track back terms in the proofs. Furthermore this also gives - without much extra effort- the opportunity to refer to these results in future research not necessarily restricted to spatially homogeneous interactions.
\subsection{Background on relation between boundary laws and Gibbs measures}
The marginals of a Gibbs measure for a nearest-neighbor potential in a finite subtree $(\Lambda, L_\Lambda)$ can be written as the product of the associated transfer operator evaluated at the spins at the edges with at least one vertex in the subtree and some $\mathcal{F}_{\partial \Lambda}$-measurable function.
By Zachary \cite{Z83} this function can be expressed by so-called boundary laws and one obtains a one-to-one relation between boundary laws and those Gibbs measures which are also tree-indexed Markov chains, covering the class of extremal Gibbs measures. We cite the theorem in its original form allowing also for nonhomogeneous transfer operators. We will use it later only for homogeneous interactions, but nonhomogeneous boundary law solutions. 
\begin{defn}\label{def:bl}
A family of functions $\{ \lambda_{xy} \}_{( x,y ) \in \vec L}$ with $\lambda_{xy}:=\lambda_{(x,y)} \in [0, \infty)^{\Z}$ and $\lambda_{xy}  \not \equiv 0$ is called a \textbf{boundary law} for the family of transfer operators $\{ Q_b\}_{b \in L}$ if 
\begin{itemize} 
\item[i)  ]for each $( x,y ) \in \vec L$ there exists a constant  $c_{xy}>0$ such that the \textbf{boundary law equation}
\begin{equation}\label{eq: bl}
\lambda_{xy}(\omega_x) = c_{xy} \prod_{z \in \partial x \setminus \{y \}} \sum_{\omega_z \in \mathbb{Z}} Q_{zx}(\omega_x,\omega_z) \lambda_{zx}(\omega_z)
\end{equation}
holds for every $\omega_x \in \mathbb{Z}$ and 
\item[ii) ]for any $x \in V$ the \textbf{normalizability condition}
\begin{equation} \label{eq: normalizability}
\sum_{\omega_x \in \mathbb{Z}} \Big( \prod_{z \in \partial x} \sum_{\omega_z \in \mathbb{Z}} Q_{zx}(\omega_x,\omega_z) \lambda_{zx}(\omega_z) \Big) < \infty
\end{equation}
holds true.
\end{itemize}
\end{defn}
Then the associated theorem reads
\begin{thm}[Theorem 3.2 in \cite{Z83}] \label{thm: Zachary}
Let $(Q_b)_{b \in L}$ be any family of transfer operators such that there is some $\omega \in \Omega$ with \begin{equation} \label{cond: nonnull}
Q_{\{x,y \}}(i, \omega_y)>0 \text{ for all } \{x,y\} \in L \text{ and any }i \in \Z.
\end{equation}
	
Then for the Markov specification $\gamma$ associated to $(Q_b)_{b \in L}$  we have:
\begin{itemize}
\item[i)  ] Each boundary law $(\lambda_{xy})_{( x,y ) \in \vec{L}}$ for $(Q_b)_{b \in L}$ defines a unique tree-indexed Markov chain Gibbs measure $\mu \in \mathcal{G}(\gamma)$ with marginals
\begin{equation}\label{BoundMC}
\mu(\sigma_{\Lambda \cup \partial \Lambda}=\omega_{\Lambda \cup \partial \Lambda}) = (Z_\Lambda)^{-1} \prod_{y \in \partial \Lambda} \lambda_{y y_\Lambda}(\omega_y) \prod_{b \cap \Lambda \neq \emptyset} Q_b(\omega_b),
\end{equation}
for any connected set $\Lambda \Subset V$ where $y \in \partial \Lambda$, $y_\Lambda$ denotes the unique $n.n.$ of $y$ in $\Lambda$ and $Z_{\Lambda}$ is the normalization constant which turns the r.h.s. into a probability measure.
\item[ii) ]Conversely, every tree-indexed Markov chain Gibbs measure $\mu \in \mathcal{G}(\gamma)$ admits a representation of the form (\ref{BoundMC}) in terms of a boundary law (unique up to a constant positive factor).
\end{itemize}
\end{thm}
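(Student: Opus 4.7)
The plan is to handle the two directions separately. For direction (i), I would build the measure from its consistent family of finite-volume marginals; for direction (ii), I would read off the boundary law from the known Gibbs measure, using the Markov property to localize things to subtrees.

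For direction (i), I would fix a boundary law $(\lambda_{xy})$ and define, for every finite connected $\Lambda \Subset V$, a probability measure $\mu_\Lambda$ on $\Z^{\Lambda \cup \partial\Lambda}$ by the right-hand side of \eqref{BoundMC}. The normalizability condition \eqref{eq: normalizability} guarantees $Z_\Lambda<\infty$ (by a short induction along the finite tree). The key step is then \emph{Kolmogorov consistency}: for $\Lambda' = \Lambda \cup \{y^*\}$ with $y^*\in\partial\Lambda$, summing $\mu_{\Lambda'}$ over the new outer boundary $\partial\Lambda'\setminus\partial\Lambda$ factorizes over the $d$ new neighbors $z_1,\dots,z_d$ of $y^*$, producing
\begin{equation*}
\prod_{i=1}^{d}\sum_{\omega_{z_i}\in\Z} Q_{z_i y^*}(\omega_{y^*},\omega_{z_i})\lambda_{z_i y^*}(\omega_{z_i}),
\end{equation*}
which by the boundary law equation \eqref{eq: bl} at $(y^*,y^*_\Lambda)$ equals $\lambda_{y^* y^*_\Lambda}(\omega_{y^*})/c_{y^* y^*_\Lambda}$, exactly the missing factor needed to reconstruct $\mu_\Lambda$ (the edge weight $Q_{y^*_\Lambda y^*}$ is already present in both measures). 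The multiplicative constants $c_{xy}$ and the partition function differences are absorbed by renormalization. Kolmogorov's theorem then yields a unique probability measure $\mu$ with these marginals. The DLR equation for any finite $\Lambda$ follows by dividing \eqref{BoundMC} by the marginal on $\partial\Lambda$, which is a function only of the boundary spins; the resulting conditional distribution coincides with $\gamma_\Lambda(\cdot\mid\omega_{\partial\Lambda})$ from \eqref{Def: Gibbs specification}. The tree-indexed Markov property is immediate from the product structure over edges: once $\sigma_x$ is fixed, the factors of the marginal split over the subtrees hanging off $x$.

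For direction (ii), given a Markov chain Gibbs measure $\mu$, I would recover the boundary law edge by edge. For an oriented edge $(x,y)\in\vec L$ let $T_{yx}$ denote the subtree on the $x$-side of $\{x,y\}$, and define $\lambda_{xy}(\cdot)$ up to a positive constant by isolating the $\omega_x$-dependence of the marginal of $\mu$ on $T_{yx}\cap (\Lambda\cup\partial\Lambda)$ for any growing connected $\Lambda$ containing $x$ but not $y$: concretely, the Markov property tells us that conditional on $\sigma_x$, the restrictions of $\mu$ to the two sides of the edge decouple, so $\mu(\sigma_x = \cdot \mid \mathcal{F}_{]-\infty,yx[})$ is an $\mathcal{F}_{]-\infty,yx[}$-measurable multiple of a deterministic function of $\omega_x$ — this deterministic function is taken to be $\lambda_{xy}$. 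Hypothesis \eqref{cond: nonnull} prevents degenerate denominators when carrying out the extraction, and it forces strict positivity of the extracted ratios. The boundary law equation then emerges as the compatibility condition when one writes \eqref{BoundMC} both for $\Lambda=\{x\}$ and for $\Lambda = \{x\}\cup (\partial x\setminus\{y\})$, exploiting the Markov property to split the sum over spins at $\partial x\setminus\{y\}$ into the product structure of \eqref{eq: bl}. Normalizability \eqref{eq: normalizability} holds because $\mu$ is a probability measure on $\Omega$, so in particular the single-site marginal at $x$ is a probability on $\Z$. Uniqueness up to a positive constant per edge is built in: $\lambda_{xy}\mapsto c_{xy}\lambda_{xy}$ is absorbed into $Z_\Lambda$ in \eqref{BoundMC}.

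The main obstacle is in direction (ii), namely the abstract identification of the messages $\lambda_{xy}$ from the given Markov chain. On the tree, the absence of cycles makes the subtree decomposition clean, and the Markov property localizes the ``information coming from behind $x$ relative to $y$'' to a single function of $\omega_x$; nevertheless, one must handle the infinite subtree $T_{yx}$ carefully, either through a coherent choice of finite-volume representatives (as above) or through conditional probabilities against the tail $\sigma$-algebra $\mathcal{F}_{]-\infty,yx[}$. The non-null assumption \eqref{cond: nonnull} is essential to guarantee that all the conditional densities appearing during the extraction are well-defined and positive, and to rule out the pathological case in which $\lambda_{xy}\equiv 0$ would be forced. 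Once the $\lambda_{xy}$ are in hand, checking \eqref{eq: bl} is a direct reformulation of the DLR equation on $\{x\}$.
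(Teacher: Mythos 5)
The paper does not prove this theorem: it is a verbatim citation of Theorem~3.2 in Zachary \cite{Z83}, and no proof appears in Section~\ref{sec: Proofs}. So there is no ``paper's own proof'' to compare against; what follows assesses your reconstruction on its own merits, measured loosely against the argument in \cite{Z83}.

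Your sketch captures the substance of both directions. For (i), defining $\mu_\Lambda$ by the right-hand side of \eqref{BoundMC} and checking Kolmogorov consistency by summing out the $d$ new outer-boundary vertices $z_1,\dots,z_d$ of $y^*$ and invoking the boundary law equation \eqref{eq: bl} at $(y^*,y^*_\Lambda)$ is exactly the right move. One thing worth making explicit: that same computation yields the identity $Z_{\Lambda\cup\{y^*\}} = Z_\Lambda / c_{y^* y^*_\Lambda}$, so finiteness of $Z_\Lambda$ propagates by induction from the single-site case, where $Z_{\{x\}}<\infty$ \emph{is} precisely the normalizability condition \eqref{eq: normalizability}. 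For (ii), reading $\lambda_{xy}$ off the $\omega_x$-dependence of the marginal on the $x$-side subtree of the edge and recovering \eqref{eq: bl} as a compatibility condition between nested marginal representations matches Zachary's construction in substance.

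Two steps you gloss over would need real work in a complete proof. First, in (ii), the two-sided decoupling ``conditional on $\sigma_x$, the two sides of $\{x,y\}$ are independent'' is not an immediate rewrite of the one-sided tree-indexed Markov definition $\mu(\sigma_y=i\mid\mathcal{F}_{]-\infty,xy[})=\mu(\sigma_y=i\mid\mathcal{F}_x)$; you must propagate the Markov property outward along the tree to promote it to conditional independence of the entire subtree hanging off $y$ from the past of $(x,y)$. Second, your extraction of $\lambda_{xy}$ from finite-volume marginals on $T_{yx}\cap(\Lambda\cup\partial\Lambda)$ produces, for each $\Lambda$, a function defined only up to a positive multiple, and one must check that these finite-volume representatives can be coherently aligned as $\Lambda$ grows; this projective-limit step is where the equivalence-class viewpoint of \cite{Z83} does genuine work and is not automatic. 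Neither gap is structural --- your plan is essentially Zachary's --- but a reader expecting a proof rather than a sketch would want both supplied.
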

In the context of gradient potentials \eqref{Def: GradPot}, the requirement \eqref{cond: nonnull} means strict positivity of the family $(Q_b)_{b \in L}$. 
One approach in reducing complexity of the system of equations \eqref{eq: bl} for gradient potentials \eqref{Def: GradPot} is assuming that all occurring functions $\lambda_{xy} \in [0,\infty)^\Z$ are periodic functions on $\Z$ for some common period $q \in \{2,3,\ldots\}$.
\begin{rk}
In \cite{Z83}, boundary laws are formally defined as equivalence classes of families of functions, two functions being equivalent if and only if one is obtained by multiplying the other one by  a suitable edge-dependent positive constant. Equivalent representatives of a boundary law are associated to the same tree-indexed Markov chain Gibbs measure. Hence we can impose a further constraint to select a representative, e.g. by fixing the value of an arbitrary norm to be one, as it will be done in the following.   
\end{rk} 
\subsection{Height-periodic boundary laws}
In what follows we assume that the transfer operator $\{Q_b\}_{b \in L}$ is summable, i.e. $\sum_{i \in \Z} \vert Q_b(i) \vert<\infty$ for all $b \in L$.
 
We call a family  $(\lambda^q_{xy})_{(x,y) \in \vec{L}}$ of functions $\lambda^q_{xy} \in [0, \infty)^{\Z}$ a $q$-\textit{height-periodic boundary law} for the transfer operator $\{ Q_b\}_{b \in L}$ if it solves the boundary law equation \eqref{eq: bl} and for any $(x,y) \in \vec{L}$ the function $\lambda^q_{xy}: \mathbb{Z} \rightarrow (0, \infty)$ is $q$-periodic. 
Clearly, there is a one-to-one correspondence between the set of $q$-height-periodic boundary laws for a transfer operator $\{ Q_b\}_{b \in L}$ and the set of boundary laws on the finite local state space $\mathbb{Z}_q=\mathbb{Z} / q\mathbb{Z}$ for the associated \textit{fuzzy transfer operator} $(Q_b^q)_{b \in L}$ where $Q_b^q(\bar{i}):= \sum_{j \in \bar{i}=i+q\Z}Q_b(j), \bar{i} \in \mathbb{Z}_q$. 
One direction is simply given by setting $\lambda^q_{xy}(\bar{i})=\lambda^q_{xy}(j)$ for any $j \in i+q\mathbb{Z}$.
Hence, a $q$-height periodic boundary law (normalized to be an element of the unit simplex $\Delta^q$) can be computed by solving the finite-dimensional system of equations 
\begin{equation} \label{eq: perGenBL}
\lambda^q_{xy}(\bar{i})=\frac{\prod_{z \in \partial \{x\} \setminus y} \sum_{\bar{j} \in \mathbb{Z}_q}Q_{\{x,z\}}^q(\bar{i} - \bar{j})\lambda^q_{zx}(\bar{j})}{\Vert \prod_{z \in \partial \{x\} \setminus y} \sum_{\bar{j} \in \mathbb{Z}_q}Q_{\{x,z\}}^q(\bar{\cdot} - \bar{j})\lambda^q_{zx}(\bar{j}) \Vert_1}, \quad \bar{i} \in \mathbb{Z}_q.
\end{equation}
at any edge $(xy) \in \vec{L}$. 

Note that finiteness of all $Q^q_b$ is equivalent to summability of all $Q_b$ which explains the necessity of this assumption.
\subsection{A two-step construction of gradient Gibbs measures}
In what follows, we assume that $q \in \{2,3, \ldots \}$ and  $(Q_b)_{b \in L}$ is any symmetric transfer operator of the form \eqref{Def: Gibbs specification} such that for any $b \in L$ the fuzzy transfer operator $Q_b^q$ is strictly positive. 

Height-periodic boundary laws do not fulfill the normalizability condition \eqref{eq: normalizability}. This means that the measure formally given by \eqref{BoundMC} is not defined.
\subsubsection*{The $q$-spin fuzzy chain}
However, employing a two-step procedure by which we first sample a tree-indexed Markov chain, the so-called \textit{fuzzy chain} on $(\Z_q^V,\mathcal{F}^q)$ according to \eqref{BoundMC}, where $\mathcal{F}^q=\otimes_{x \in V} 2^{\Z_q}$ is the product-$\sigma$-algebra generated by the fuzzy-spin projections $\bar{\sigma}_{\{x\}}: \Z_q^V \rightarrow \Z_q$, we are able to construct gradient measures on $(\Omega^\nabla,\mathcal{F}^\nabla)$ which satisfy the DLR-equation \eqref{eq: DLR}. Here, $2^{\Z_q}$ denotes the power set on $\Z_q$. The fuzzy chain itself is an element of the set of Gibbs measures with respect to the \textit{fuzzy specification} $\gamma^q$
whose kernels are given by 
\begin{equation} 
\gamma^q_\Lambda(\bar{\sigma}_\Lambda=\bar{\omega}_\Lambda \mid \bar{\omega})=Z^q_\Lambda(\bar{\omega}_{\partial \Lambda})^{-1} \prod_{b \cap \Lambda \neq \emptyset}Q^q_b(\bar{\omega}_b).
\end{equation} for any finite $\Lambda \subset V$. After sampling a fuzzy chain, we independently apply kernels to the edges each describing a distribution of total increments (as elements of $\Z$) along an edge given the increment of the fuzzy chain (which is an element of $\Z_q$) along the respective edge. The so obtained probability  measure on the space of gradients $(\Omega^\nabla, \mathcal{F}^\nabla)$ is thus a hidden Markov model where the transition mechanism is not acting on the site-variables as it is more common, but acting on edge variables.

We will first present the two steps of construction and then give the precise definition of the respective DLR-equation and prove that the measure constructed solves them.  
\subsubsection*{From $q$-spin fuzzy increments to $\mathbb{Z}$-valued increments} \label{subsubsec: fuzzyTogradient}
For any $\{x,y \} \in L$ define a kernel $\rho_{Q_{\{x,y\}}}^{q}$ from $\Z_q$ to $\Z$ equipped with the power set $2^\mathbb{Z}$ by setting:
\begin{equation} \label{eq: RWPEtr}
\rho_{Q_{\{x,y\}}}^{q}(j \mid \bar{s})= \chi(j \in \bar{s})\frac{Q_{\{x,y\}}(j)}{Q_{\{x,y\}}^q(\bar{s})},
\end{equation} 
where $\chi$ denotes the indicator function.

Then define a map 
$T^q_Q: \mathcal{M}_1(\Z_q^V) \rightarrow \mathcal{M}_1(\Omega^\nabla, \mathcal{F}^\nabla)$ from spin-$q$ measures on vertices to gradient measures by setting
\begin{equation} \label{eq: ConstructionOfGGM}
T^q_Q(\mu^q)(\eta_\Lambda=\zeta_\Lambda)=\sum_{\bar{\omega}_\Lambda \in \Z_q^\Lambda}\mu^q(\bar{\sigma}_\Lambda=\bar{\omega}_\Lambda)\prod_{(x,y) \in {}^w\vec{L}, \, x,y \in \Lambda} \rho^q_{Q_{\{x,y\}}}(\zeta_{(x,y)} \mid \bar{\omega}_y-\bar{\omega}_x) 
\end{equation}
where $\Lambda \subset V$ is any finite connected set and $w \in V$ is an arbitrary site. This describes independent sampling over the edges with weight given by $Q$ conditional on the increment class.

We note that due to the fact that $\rho_{Q_{\{x,y\}}}^{q}(-j \mid -\bar{s})=\rho_{Q_{\{x,y\}}}^{q}(j \mid \bar{s})$ for all $\{x,y\} \in L$ and any $\bar{s} \in \Z_q$, $j \in \Z$, the definition of the map $T^q_Q$ does not depend on the concrete choice of the vertex $w$.

Given any $q$-periodic boundary law $(\lambda^q_{xy})_{(x,y) \in \vec{L}}$ we will write \begin{equation} \label{Not: BL_GGM}
\nu^{\lambda^q}:=T^q_Q(\mu^{\lambda^q})
\end{equation} where $\mu^{\lambda^q} \in \mathcal{G}(\gamma^q)$ is the fuzzy chain on $\mathbb{Z}_q^V$ associated to $(\lambda^q_{xy})_{(x,y) \in \vec{L}}$ by Theorem \ref{thm: Zachary}.

As the measures $\nu^{\lambda^q}$ will be of particular interest, we will write down a further marginals representation. 
\begin{lemma} \label{lem: oldRepOfGGM}
Let $\Gamma(x,y) \subset \vec{L}$ denote the set of edges in the shortest path between vertices $x$ and $y$ and $\bar{i} \in \Z_q$ the mod-$q$ projection of an integer $i$. Then the gradient measure $\nu^{\lambda^q}$ can be represented as
\begin{equation}
\nu^{\lambda^q}(\eta_{\Lambda \cup \partial \Lambda}=\zeta_{\Lambda \cup \partial \Lambda}) 
=Z_{\Lambda}^{-1} \left(\sum_{\bar{s} \in \Z_q} \prod_{y \in \partial \Lambda} \lambda^q_{yy_{\Lambda}}(\bar{s}+\sum_{b \in \Gamma(w,y)}\bar{\zeta_b}) \right) \prod_{b \cap \Lambda \neq \emptyset} Q_b(\zeta_b),
\end{equation}  
where $\Lambda \subset V$ is any finite connected volume, $w \in \Lambda$ is any fixed site and $Z_{\Lambda}$ is a normalization constant. Here, $y_\Lambda$ denotes the unique nearest neighbor of $y$ inside $\Lambda$.
\end{lemma}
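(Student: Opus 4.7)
The plan is to unfold the definition \eqref{eq: ConstructionOfGGM} of $T^q_Q$ applied on the slightly enlarged volume $\Lambda \cup \partial \Lambda$, then substitute the explicit marginals of the fuzzy chain $\mu^{\lambda^q}$ from Zachary's theorem, and finally collapse the sums using the identity
\begin{equation*}
Q^q_b(\bar\omega_y - \bar\omega_x)\,\rho^q_{Q_b}(\zeta_{(x,y)}\mid \bar\omega_y - \bar\omega_x) = \chi(\bar\zeta_{(x,y)} = \bar\omega_y - \bar\omega_x)\, Q_b(\zeta_{(x,y)}),
\end{equation*}
which is immediate from \eqref{eq: RWPEtr}.

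More precisely, I would first note that since $\Lambda$ is connected and $\Gamma^d$ is a tree, the set of edges $b = \{x,y\}$ with both endpoints in $\Lambda \cup \partial\Lambda$ coincides with $\{b \in L : b \cap \Lambda \neq \emptyset\}$; points in $\partial\Lambda$ are leaves attached to $\Lambda$ and hence are never adjacent to each other. Applying \eqref{eq: ConstructionOfGGM} at volume $\Lambda \cup \partial \Lambda$ together with the boundary-law representation \eqref{BoundMC} for $\mu^{\lambda^q}$ then yields
\begin{equation*}
\nu^{\lambda^q}(\eta_{\Lambda\cup\partial\Lambda}=\zeta_{\Lambda\cup\partial\Lambda}) = \tilde Z_\Lambda^{-1} \sum_{\bar\omega_{\Lambda\cup\partial\Lambda}} \prod_{y\in\partial\Lambda} \lambda^q_{yy_\Lambda}(\bar\omega_y) \prod_{b\cap\Lambda\neq\emptyset} Q^q_b(\bar\omega_b)\,\rho^q_{Q_b}(\zeta_b\mid\bar\omega_b),
\end{equation*}
where $\bar\omega_b = \bar\omega_y-\bar\omega_x$ for $b=\{x,y\}$ (the ambiguity in orientation is harmless by symmetry of $Q_b$ and $Q^q_b$ and the corresponding symmetry of $\rho^q_{Q_b}$ noted after \eqref{eq: ConstructionOfGGM}).

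Inserting the identity above pulls the factor $\prod_{b\cap\Lambda\neq\emptyset} Q_b(\zeta_b)$ outside the sum, so what remains is
\begin{equation*}
\sum_{\bar\omega_{\Lambda\cup\partial\Lambda}} \prod_{y\in\partial\Lambda} \lambda^q_{yy_\Lambda}(\bar\omega_y) \prod_{b=\{x,y\},\, b\cap\Lambda\neq\emptyset} \chi\bigl(\bar\zeta_{(x,y)} = \bar\omega_y - \bar\omega_x\bigr).
\end{equation*}
Here the tree structure is used a second time: since $(\Lambda\cup\partial\Lambda, L_{\Lambda\cup\partial\Lambda})$ is a finite subtree, fixing $\bar\omega_w = \bar s \in \mathbb{Z}_q$ at the arbitrary base point $w \in \Lambda$ together with the compatibility constraints determines $\bar\omega_y = \bar s + \sum_{b\in\Gamma(w,y)}\bar\zeta_b$ uniquely for every other $y$. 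Therefore the inner sum reduces to $\sum_{\bar s \in \mathbb{Z}_q} \prod_{y\in\partial\Lambda} \lambda^q_{yy_\Lambda}\bigl(\bar s + \sum_{b\in\Gamma(w,y)}\bar\zeta_b\bigr)$, which is exactly the claimed formula (after relabelling the normalization constant $Z_\Lambda$).

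There is no real obstacle to this proof; the only point demanding a little care is the bookkeeping between oriented and unoriented edges and the reconstruction of the fuzzy configuration from its gradients, which works cleanly precisely because loops are absent. Independence of the final expression from the choice of base point $w$ is automatic from the shift-invariance of the sum over $\bar s$, consistent with the remark after \eqref{eq: ConstructionOfGGM}.
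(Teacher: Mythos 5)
Your proof is correct and follows essentially the same path as the paper's own proof: both unfold $T^q_Q$ at the enlarged volume $\Lambda \cup \partial\Lambda$, substitute Zachary's marginal formula \eqref{BoundMC} for the fuzzy chain, cancel $Q^q_b$ against the denominator of $\rho^q_{Q_b}$ via \eqref{eq: RWPEtr}, and use the tree structure to replace the sum over $\bar\omega$ by a sum over the single mod-$q$ height $\bar s$ at the base vertex. The only cosmetic difference is the order in which Zachary's representation is inserted relative to the collapse of the $\bar\omega$-sum; your observations that edges meeting $\Lambda$ are exactly the edges inside $\Lambda\cup\partial\Lambda$ and that two boundary vertices are never adjacent (by acyclicity) are both correct and make the bookkeeping explicit.
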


The factor in parenthesis is the Radon-Nikodym derivative with respect to the free measure and indicates dependence.
Applying Lemma \ref{lem: oldRepOfGGM} to a singleton $\Lambda=\{x\}$ for any $x \in V$ and taking the boundary law equation \eqref{eq: bl} into account then gives in particular:
\begin{lemma} \label{lem: BondMarginal}
Let $(x,y) \in \vec{L}$ be an oriented edge. Then we have for the single-edge marginal of the gradient measure
\begin{equation*}
\nu^{\lambda^q}(\eta_{(x,y)}=\zeta_{(x,y)}) =Z^{-1}_{(x,y)}Q_{\{x,y\}}(\zeta_{(x,y)})\sum_{\bar{s} \in \mathbb{Z}_q}\lambda^q_{xy}(\bar{s})\lambda^q_{yx}(\bar{s}+\zeta_{(x,y)})
\end{equation*}
where $\zeta_{(x,y)} \in \mathbb{Z}$.
\end{lemma}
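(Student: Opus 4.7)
The plan is to obtain the one-edge marginal directly from the volume formula of Lemma~\ref{lem: oldRepOfGGM} by specializing to $\Lambda=\{x\}$ and then summing out all incident edges other than $(x,y)$, the computation being closed by invoking the boundary law equation \eqref{eq: bl} at $x$.

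First I would apply Lemma~\ref{lem: oldRepOfGGM} with $\Lambda=\{x\}$ and reference vertex $w=x$. Then $\partial\Lambda=\partial x$, each $z\in\partial x$ satisfies $z_\Lambda=x$, and the path $\Gamma(x,z)$ is just the single edge $(x,z)$, so the formula collapses to
\[
\nu^{\lambda^q}\bigl(\eta_{\{x\}\cup\partial x}=\zeta\bigr)=Z_x^{-1}\Bigl(\sum_{\bar s\in\mathbb Z_q}\prod_{z\in\partial x}\lambda^q_{zx}(\bar s+\bar\zeta_{(x,z)})\Bigr)\prod_{z\in\partial x}Q_{\{x,z\}}(\zeta_{(x,z)}).
\]
Next I would sum over $\zeta_{(x,z)}\in\mathbb Z$ for every $z\in\partial x\setminus\{y\}$. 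Since $\lambda^q$ is bounded and $\sum_j|Q_b(j)|<\infty$, the $\mathbb Z$-sums and the finite $\mathbb Z_q$-sum over $\bar s$ may be interchanged, and pulling the $z=y$ factors out front leaves
\[
\nu^{\lambda^q}(\eta_{(x,y)}=\zeta_{(x,y)})=Z_x^{-1}Q_{\{x,y\}}(\zeta_{(x,y)})\sum_{\bar s\in\mathbb Z_q}\lambda^q_{yx}(\bar s+\bar\zeta_{(x,y)})\prod_{z\in\partial x\setminus\{y\}}\sum_{\zeta\in\mathbb Z}Q_{\{x,z\}}(\zeta)\lambda^q_{zx}(\bar s+\bar\zeta).
\]

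The next step exploits $q$-periodicity: regrouping each inner $\mathbb Z$-sum by residue class modulo $q$ and using the definition $Q^q_b(\bar\jmath)=\sum_{j\in\bar\jmath}Q_b(j)$ converts the factor for $z$ into $\sum_{\bar\jmath\in\mathbb Z_q}Q^q_{\{x,z\}}(\bar\jmath)\lambda^q_{zx}(\bar s+\bar\jmath)$. By symmetry of $Q^q$, the reindexing $\bar\omega_z=\bar s+\bar\jmath$ identifies the resulting product over $z\in\partial x\setminus\{y\}$ with $c_{xy}^{-1}\lambda^q_{xy}(\bar s)$, exactly the right-hand side of the boundary law equation \eqref{eq: bl} for the outgoing edge from $x$ to $y$. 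Substituting back, the positive constants $Z_x^{-1}c_{xy}^{-1}$ are absorbed into a single edge-dependent normalization $Z^{-1}_{(x,y)}$ chosen so that the result is a probability on $\mathbb Z$, and the claimed formula emerges.

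There is no real obstacle here: the proof is essentially bookkeeping. The only substantive ingredients are the $\mathbb Z\to\mathbb Z_q$ collapse, which rests on $q$-periodicity of $\lambda^q$ together with summability of $Q_b$, and the boundary law equation which then closes the expression. Symmetry of $Q$ is what makes the reindexing match the conventional direction of the boundary law equation.
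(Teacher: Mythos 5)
Your proof is correct and follows essentially the same path as the paper: specialize Lemma~\ref{lem: oldRepOfGGM} to $\Lambda=\{x\}$, sum out the edges other than $(x,y)$, and recognize the remaining product as the boundary law equation evaluated at $\bar s$. The only (cosmetic) difference is that you convert the inner $\mathbb Z$-sums to $\mathbb Z_q$-sums via $Q^q$ before invoking \eqref{eq: perGenBL}, whereas the paper applies the $\mathbb Z$-form \eqref{eq: bl} directly to close the product; both steps are equivalent under $q$-periodicity.
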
 

Lemma \ref{lem: BondMarginal} will be used later to prove lack of translation invariance.
\subsection{The gradient Gibbs property}
The following result on the gradient Gibbs property of the image of the map $T_Q^q$ applied to Gibbs measures on $\Z_q$ is an extension of earlier results of \cite{KS17} and \cite{HK21} to non-homogeneous gradient Gibbs measures.
\begin{thm} \label{thm: DLR-eq}
The map $T^q_Q$ maps Gibbs measures on $\Z_q^V$ for the fuzzy specification $\gamma^q$ to gradient Gibbs measures for the gradient Gibbs specification $\gamma'$ \eqref{grad}.	
\end{thm}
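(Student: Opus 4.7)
The plan is to verify the DLR identity \eqref{eq: DLR} directly for $\nu := T^q_Q(\mu^q)$. By a monotone class argument it suffices to test against indicator functions $F = \mathbf{1}\{\eta_{\Lambda'} = \zeta^*_{\Lambda'}\}$ on a sufficiently large finite connected volume $\Lambda' \supset \Lambda \cup \partial\Lambda$. Choosing an anchor $w \in \Lambda$, the construction \eqref{eq: ConstructionOfGGM} expresses $\nu(F)$ as a sum over fuzzy configurations $\bar\omega_{\Lambda'}$ of $\mu^q$-marginals multiplied by the product $\prod_b \rho^q_{Q_b}(\zeta^*_b \mid \bar\omega_y - \bar\omega_x)$ over the relevant edges of $\Lambda'$ pointing away from $w$.

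The central step is to apply the fuzzy Gibbs property $\mu^q \gamma^q_\Lambda = \mu^q$ and evaluate the local sum
\begin{equation*}
\sum_{\bar\omega_\Lambda} \gamma^q_\Lambda(\bar\omega_\Lambda \mid \bar\omega_{\partial\Lambda}) \prod_{b \cap \Lambda \neq \emptyset} \rho^q_{Q_b}(\zeta^*_b \mid \bar\omega_y - \bar\omega_x).
\end{equation*}
Substituting the explicit forms of $\gamma^q_\Lambda$ and of $\rho^q_{Q_b}$ from \eqref{eq: RWPEtr}, the factors $Q^q_b(\bar\omega_b)$ appearing as denominators in $\rho^q_{Q_b}$ cancel exactly with the numerators $Q^q_b(\bar\omega_b)$ in the fuzzy kernel, leaving behind $Z^q_\Lambda(\bar\omega_{\partial\Lambda})^{-1} \prod_{b \cap \Lambda \neq \emptyset} Q_b(\zeta^*_b)$ together with indicators $\chi(\zeta^*_b \equiv \bar\omega_y - \bar\omega_x \bmod q)$. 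Since $\Lambda$ is connected and the tree is acyclic, given $\bar\omega_{\partial\Lambda}$ these indicators pin down $\bar\omega_\Lambda$ uniquely by propagation along the tree, so the sum collapses to a single surviving term.

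To match this with $\nu(\gamma'_\Lambda F)$ the right-hand side is computed analogously: the kernel $\gamma'_\Lambda(d\tilde\zeta \mid \zeta)$ produced from Definition \ref{def: GradientGibbs} via any lift $\omega$ of $\zeta$ factors into the constraint $\tilde\zeta_b = \zeta_b$ on edges disjoint from $\Lambda$ and the local weight $Z_\Lambda(\omega_{\partial\Lambda})^{-1} \prod_{b \cap \Lambda \neq \emptyset} Q_b(\zeta^*_b)$ on edges intersecting $\Lambda$. Consistency of the two partition functions is then obtained by grouping the integer sum defining $Z_\Lambda(\omega_{\partial\Lambda})$ according to residue classes modulo $q$ and using $Q^q_b(\bar i) = \sum_{j \in \bar i} Q_b(j)$; on a tree this regrouping yields exactly $Z^q_\Lambda$ evaluated on the fuzzy image of the boundary, confirming $\nu(F) = \nu(\gamma'_\Lambda F)$.

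The main technical obstacle is the careful treatment of the outer $\sigma$-algebra $\mathcal{T}^\nabla_\Lambda$: whereas the fuzzy Gibbs property is formulated by conditioning on the full fuzzy configuration on $\Lambda^c$, the gradient Gibbs specification $\gamma'_\Lambda$ only sees boundary spins modulo a common global shift. Because the weights $Q_b$ and $\rho^q_{Q_b}$ depend only on increments, every quantity entering the above collapse is invariant under a joint shift of all fuzzy spins, so the final identity descends to the height-shift quotient, which is exactly the content of $\mathcal{T}^\nabla_\Lambda$. A secondary point is to verify that the expansion, although expressed through edges oriented away from the anchor $w$, is in fact $w$-independent throughout; this follows from the symmetry $\rho^q_{Q_b}(-j \mid -\bar s) = \rho^q_{Q_b}(j \mid \bar s)$ noted immediately after \eqref{eq: ConstructionOfGGM}.
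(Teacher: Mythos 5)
Your direct route---applying the fuzzy Gibbs property of $\mu^q$ inside the expansion of $T^q_Q(\mu^q)$---is a genuinely different argument from the paper's, which first reduces (via extremal decomposition and the fact that extremal tree Gibbs measures are tree-indexed Markov chains) to boundary-law form via Theorem~\ref{thm: Zachary}, and then works with the explicit marginal representation of Lemma~\ref{lem: oldRepOfGGM}. Your two intermediate observations are correct: the factors $Q^q_b(\bar\omega_b)$ in the fuzzy kernel $\gamma^q_\Lambda$ cancel exactly against the denominators of $\rho^q_{Q_b}$, and the surviving indicators pin $\bar\omega_\Lambda$ down to at most one configuration given $\bar\omega_{\partial\Lambda}$ (possibly none, encoding a mod-$q$ boundary-consistency constraint).

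The gap is in the final partition-function step. The claim that grouping the integer sum defining
$Z_\Lambda(\omega_{\partial\Lambda})=\sum_{\varphi_\Lambda\in\Z^\Lambda}\prod_{b\cap\Lambda\neq\emptyset}Q_b(\varphi_b)$
into residue classes modulo $q$ yields $Z^q_\Lambda(\bar\omega_{\partial\Lambda})$ is false: the interior variable $\varphi_x$ enters several edge factors at once, so the product of class-sums is not the class-sum of the products. Already for a single site $\Lambda=\{x\}$ with all boundary heights zero one has $Z_\Lambda=\sum_{t\in\Z}Q(t)^{d+1}$ whereas $Z^q_\Lambda=\sum_{\bar t\in\Z_q}Q^q(\bar t)^{d+1}$, and these differ. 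The two normalizations therefore do not match termwise, and your proof as written does not close.

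What actually makes the DLR identity true is subtler: when $\nu(\gamma'_\Lambda F)$ is expanded, the factor $Z_\Lambda(\omega_{\partial\Lambda})^{-1}$ appears \emph{inside} the $\nu$-integral, and the residual integer degree of freedom coming from $\nu=T^q_Q(\mu^q)$ (the global shift $c$ of compatible lifts, combined with the sum over the interior fuzzy spins) regenerates precisely $Z_\Lambda(\zeta^*)$ as a factor, which then cancels. There is no moment at which $Z_\Lambda$ equals $Z^q_\Lambda$; the cancellation is a statement about the whole integral, not a termwise identity of partition functions. This is exactly the delicacy the paper's route avoids: by passing to boundary laws, Lemma~\ref{lem: oldRepOfGGM} packages the normalization into a single $\Z_q$-sum multiplying a product of $Q$-terms, and the normalizations then cancel transparently in the ratio of conditional probabilities. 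If you want to keep your direct approach, you need to replace the partition-function claim by the bookkeeping of the free integer shift and show that the sum over $c\in\Z$ reproduces $Z_\Lambda$.
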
	
\subsection{Delocalization of possibly nonhomogeneous gradient Gibbs measures for homogeneous specifications}\label{subsec: delocalization}
We show that the sum of increments of gradient Gibbs measures for spatially homogeneous transfer operators along infinite paths diverge, and thus the heights delocalize. 
The following result is an extension of our result (Thm.4 in \cite{HK21}) for the special case of tree-automorphism invariant gradient Gibbs measures to nonhomogeneous GGMs.
\begin{pro} \label{pro: deloc}
If $(\lambda_{xy}^q)_{(x,y) \in \vec{L}}$ is a $q$-height-periodic boundary law solution for a spatially homogeneous family of positive transfer operators $Q$ then the possibly nonhomogeneous gradient Gibbs measure $\nu^{\lambda^q}$ associated to it via \eqref{eq: ConstructionOfGGM} \textit{delocalizes} in the sense that $\nu^{\lambda^q}(W_n=k) \stackrel{n \rightarrow \infty}{\rightarrow} 0$ for any total increment $W_n$ along a path of length $n$ and any $k \in \mathbb{Z}$. 
\end{pro}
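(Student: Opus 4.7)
The plan is to exploit the two-layer structure of $\nu^{\lambda^q} = T^q_Q(\mu^{\lambda^q})$ from \eqref{eq: ConstructionOfGGM}: conditional on the underlying fuzzy chain $\bar\omega$, the integer increments on distinct edges are independent, with the increment on $(x,y)$ drawn from $\rho^q_Q(\cdot \mid \bar\omega_y - \bar\omega_x)$. Fix any path $x_0, x_1, \ldots, x_n$ and set $\bar\zeta_i := \bar\omega_{x_{i+1}} - \bar\omega_{x_i}$. Writing $W_n = \sum_{i=0}^{n-1}\eta_{(x_i,x_{i+1})}$ and conditioning on the fuzzy increments, the claim reduces to showing that
\[
\bigl(\rho^q_Q(\cdot \mid \bar\zeta_0) * \cdots * \rho^q_Q(\cdot \mid \bar\zeta_{n-1})\bigr)(k)
\]
vanishes as $n\to\infty$, uniformly in $k$ and in the fuzzy realization; the bound then passes to $\nu^{\lambda^q}(W_n=k)$ by taking expectation against the fuzzy chain $\mu^{\lambda^q}$.

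The tool I would use is Fourier analysis on $\mathbb{Z}$. For $\bar s \in \mathbb{Z}_q$, define the characteristic function
\[
\phi_{\bar s}(t) := \sum_{j \in \bar s + q\mathbb{Z}} e^{itj}\,\rho^q_Q(j \mid \bar s), \quad t \in [-\pi, \pi],
\]
and set $\beta(t) := \max_{\bar s \in \mathbb{Z}_q} |\phi_{\bar s}(t)|$. Fourier inversion together with conditional independence then gives
\[
\bigl|\bigl(\rho^q_Q(\cdot \mid \bar\zeta_0) * \cdots * \rho^q_Q(\cdot \mid \bar\zeta_{n-1})\bigr)(k)\bigr| \;\le\; \frac{1}{2\pi}\int_{-\pi}^{\pi} \beta(t)^n\, dt,
\]
uniformly in $k$ and in the fuzzy realization. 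Once $\beta(t) < 1$ for almost every $t$ is established, dominated convergence finishes the argument.

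The key point, and where the hypotheses enter, is precisely the a.e.\ bound $\beta(t) < 1$. Since $Q$ is strictly positive and summable, $\rho^q_Q(j \mid \bar s) = Q(j)/Q^q(\bar s) > 0$ for every $j \in \bar s + q\mathbb{Z}$, so the conditional distribution $\rho^q_Q(\cdot \mid \bar s)$ is genuinely supported on the full arithmetic progression $\bar s + q\mathbb{Z}$, not on any proper subprogression. Hence $|\phi_{\bar s}(t)| = 1$ forces $tq \in 2\pi\mathbb{Z}$; since $\mathbb{Z}_q$ is finite, the set $\{\beta(t) = 1\} \cap [-\pi,\pi]$ is a finite subset of $\{2\pi k/q : k\in\mathbb{Z}\}$, hence Lebesgue-null. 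This is the only conceptually nontrivial input.

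I expect the real work to be bookkeeping rather than analysis: carefully extracting from \eqref{eq: ConstructionOfGGM} (or equivalently from the marginal formula in Lemma~\ref{lem: oldRepOfGGM}) the statement that, conditional on the fuzzy chain, the integer increments along a path are independent with the stated single-edge laws. Once this conditional independence is made explicit, the Fourier estimate above is immediate and actually yields the stronger uniform statement $\sup_{k\in\mathbb{Z}} \nu^{\lambda^q}(W_n = k) \to 0$, which clearly implies the claim as stated.
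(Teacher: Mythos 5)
Your proposal is correct, and its first step---conditioning on the fuzzy chain, reducing the claim to a uniform-in-$\bar\omega$ and uniform-in-$k$ bound on the conditional probability $\nu^{\lambda^q}(W_n=k\mid\bar\omega)$, which by construction \eqref{eq: ConstructionOfGGM} is an $n$-fold convolution of the single-edge kernels $\rho^q_Q(\cdot\mid\bar\zeta_i)$---is exactly the reduction the paper makes. At that point the paper stops and refers to its antecedent \cite{HK21} (``the remainder of the proof is a direct generalization of the proof of Theorem 4 in \cite{HK21}''), whereas you supply a self-contained Fourier argument: bound each characteristic function $\phi_{\bar s}(t)$ strictly below $1$ off the null set $\tfrac{2\pi}{q}\Z\cap[-\pi,\pi]$ using strict positivity of $Q$ (so that the conditional law charges the full progression $\bar s + q\Z$), take the max over $\bar s\in\Z_q$ to get a single continuous envelope $\beta(t)$ with $\beta<1$ a.e., and conclude by Fourier inversion and dominated convergence. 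This is the classical ``decay of convolutions of aperiodic laws'' estimate, and your version in fact gives the stronger uniform statement $\sup_k\nu^{\lambda^q}(W_n=k)\to 0$. The upshot is that the two proofs are the same in structure; yours spells out the analytic input that the paper delegates to an external reference, which makes it more transparent but not logically different. One tiny point worth being explicit about when writing this up formally: the single-edge kernel $\rho^q_Q$ depends only on the fuzzy increment and not on the edge (spatial homogeneity of $Q$), which is what lets $\beta(t)$ be a single edge-independent envelope; this is precisely the observation the paper emphasizes before invoking \cite{HK21}, and your argument uses it implicitly in the line defining $\beta(t)$.
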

\section{Existence theory for non-invariant GGMs of arbitrarily large periods $q$ via pseudo-unstable manifold theorem} \label{Sec: Existence theory for non-invariant GGMs}
In this section we are investigating the possibility to apply backwards 
iteration of the boundary 
law equation \eqref{eq: perGenBL} for radially symmetric (with respect to an arbitrary fixed root) $q$-height-periodic boundary laws to a spatially homogeneous transfer operator $Q$.  
We will obtain the following surprising general result: 
For any summable strictly positive spatially homogeneous $Q$, for large enough period there are always non-t.i. GGMs. 
We stress that there is no assumption on the 
existence of different automorphism invariant GGMs.
\subsection{Main results}
Assuming radial symmetry of the boundary law $\lambda$ with respect to a vertex $\rho \in V$, the boundary law equation \eqref{eq: perGenBL} at any $x \in \partial \{\rho\}$ reads
\begin{equation} \label{eq: BLHadamart}
\lambda_{\rho x}=H_q(\lambda_{y\rho}):=\frac{ (Q^q\lambda_{y \rho})^{\odot d}}{\Vert (Q^q\lambda_{y \rho})^{\odot d} \Vert_1}
\end{equation}
where $y \in \partial\{\rho\} \setminus \{x\}$.
Here, we identified the fuzzy transfer operator $Q^q$ with the symmetric circulant matrix $\left(Q^q(\bar{i}-\bar{j})\right)_{\bar{i},\bar{j} \in \Z_q}$ and $Q^q\lambda_{y \rho}$ denotes the matrix product.
For any $v,w \in \Delta^q$ and any $s \in \mathbb{R}$, the vectors $v^{\odot s} \in [0,\infty)^q \setminus \{(0,\ldots, 0)\}$ and $v \odot w \in [0,\infty)^q \setminus \{(0,\ldots, 0)\}$ are defined as
\[(v^{\odot s})_i=(v_i)^s \text{ and } (v \odot w)_i=v_i \cdot w_i, \text{ where } i \in \{0,1, \ldots, q-1 \}.\]
We will refer to the conjunction $\odot$ as the \textit{Hadamard-product}. Note that by contrast to the notation e.g. used in \cite{PeSt99}, in this paper the $\odot$-conjunction does not necessarily give a normalized object.

To remove the outer exponent $d$ on the r.h.s. of \eqref{eq: BLHadamart}, we may consider the transformation $u=G_\frac{1}{d}(\lambda):=\frac{\lambda^{\odot \frac{1}{d}}}{\Vert \lambda^{\odot \frac{1}{d}} \Vert_1 }$ and define the composed map \begin{equation*} 
S_q:=G_\frac{1}{d} \circ H_q \circ G_d
\end{equation*}
from $\Delta^q$ to $\Delta^q$. 
This means that for any $ u \in \Delta^q$
\begin{equation}\label{eq: TheMapSq}
S_q(u)= \frac{Q^q u^{\odot d}}{\Vert Q^q \Vert_1 \Vert u^{\odot d} \Vert_1}.
\end{equation}
To be able to apply backwards iteration to the map $S_q$ we need to know its spectrum. The following proposition gives a full description of it in terms of the Fourier-transform \begin{equation}
\hat{Q}: [-\pi,\pi) \rightarrow \mathbb{R} \, ; \quad \hat Q(k)=\sum_{n\in \Z}Q(n)\cos(n k )
	\end{equation} of $Q$.
\begin{pro}\label{pro: Eigenvalues} 	
The differential $D S_q[eq]: T \Delta^q \rightarrow T \Delta^q$ of the map $S_q$ computed 
in the equidistribution $eq=(1/q,\ldots,1/q)$ has the $\lfloor \frac{q}{2} \rfloor$ eigenvalues 
\[\frac{d \hat Q(2\pi\frac{j}{q})}{ \hat Q (0)}\]
where $j\in \{1,\dots, \lfloor \frac{q}{2} \rfloor \}$.
\end{pro}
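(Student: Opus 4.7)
The plan is to compute the Jacobian directly by Taylor expansion of the nonlinear map $S_q$ at the equidistribution, and then to diagonalize it using the circulant structure of $Q^q$ via the discrete Fourier transform.

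First I would verify that $eq=(1/q,\ldots,1/q)$ is a fixed point: since $Q^q$ is a symmetric circulant matrix, $Q^q\mathbf{1}=\|Q^q\|_1\mathbf{1}$ and the two normalizing factors in $S_q$ cancel the $q^{-d}$ factor, giving $S_q(eq)=eq$. Next, for $v\in T\Delta^q=\{v\in\mathbb{R}^q:\sum_i v_i=0\}$ I would expand
\begin{equation*}
(eq+tv)^{\odot d}_i=\Bigl(\tfrac1q+tv_i\Bigr)^d=q^{-d}\bigl(1+dtqv_i+O(t^2)\bigr),
\end{equation*}
so $(eq+tv)^{\odot d}=q^{-d}\mathbf{1}+dtq^{1-d}v+O(t^2)$. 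The crucial observation is that the $L^1$-norm $\|(eq+tv)^{\odot d}\|_1$ only changes at order $t^2$ because $\sum_i v_i=0$. A short computation then yields
\begin{equation*}
S_q(eq+tv)=eq+\frac{t\,d}{\|Q^q\|_1}\,Q^qv+O(t^2),
\end{equation*}
which together with $\|Q^q\|_1=\sum_{\bar k\in\Z_q}Q^q(\bar k)=\sum_{n\in\Z}Q(n)=\hat Q(0)$ identifies
\begin{equation*}
DS_q[eq]\,v=\frac{d}{\hat Q(0)}\,Q^q v,\qquad v\in T\Delta^q.
\end{equation*}
I should confirm here that $T\Delta^q$ is indeed an invariant subspace for $Q^q$: the vector $\mathbf{1}$ is the eigenvector for the Perron eigenvalue $\hat Q(0)$, and since $Q^q$ is symmetric its orthogonal complement (which is $T\Delta^q$) is preserved.

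It then remains to diagonalize the restriction of $Q^q$ to $T\Delta^q$. Since $Q^q$ is circulant, the full spectrum on $\mathbb{R}^q$ is given by the discrete Fourier transform of its first row,
\begin{equation*}
\mu_j=\sum_{k=0}^{q-1}Q^q(k)\,e^{-2\pi i jk/q}=\sum_{n\in\Z}Q(n)\,e^{-2\pi i jn/q}=\hat Q\!\left(\tfrac{2\pi j}{q}\right),
\end{equation*}
for $j=0,1,\ldots,q-1$, where the last equality uses the evenness $Q(n)=Q(-n)$ of the gradient transfer operator to replace the complex exponential by a cosine. The eigenvalue $\mu_0=\hat Q(0)$ corresponds to the constant direction $\mathbf{1}\notin T\Delta^q$, while the remaining $q-1$ eigenvalues on $T\Delta^q$ are $\mu_1,\ldots,\mu_{q-1}$.

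Finally I would use the symmetry $\mu_j=\mu_{q-j}$ (again from evenness of $\hat Q$) to group eigenvalues into pairs $\{j,q-j\}$, which collapses the $q-1$ values $\mu_1,\ldots,\mu_{q-1}$ into $\lfloor q/2\rfloor$ distinct values indexed by $j\in\{1,\ldots,\lfloor q/2\rfloor\}$ (with the lone self-paired index $j=q/2$ appearing only when $q$ is even). Multiplying by $d/\hat Q(0)$ gives exactly the claimed list. I do not expect a genuine obstacle here; the only places requiring a bit of care are the cancellation of the $O(t)$ contribution from the normalization $\|u^{\odot d}\|_1$ (which relies on $v\in T\Delta^q$) and the bookkeeping that folds the Fourier spectrum into $\lfloor q/2\rfloor$ entries.
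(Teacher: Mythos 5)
Your proposal is correct and takes essentially the same route as the paper: both establish $DS_q[eq]=\frac{d}{\hat Q(0)}Q^q$ on $T\Delta^q$ and then read off the spectrum from the discrete Fourier transform of the circulant $Q^q$, folding the $q-1$ non-trivial modes into $\lfloor q/2\rfloor$ values via the symmetry $\hat Q(k)=\hat Q(-k)$. The only cosmetic difference is that you obtain the Jacobian by a direct first-order Taylor expansion, while the paper packages the same computation as the chain rule applied to $S_q=\frac{1}{\|Q^q\|_1}Q^q\circ G_d$ together with an explicit formula for $DG_d$; in both cases the $O(t)$ contribution of the normalization drops out precisely because $\langle \mathbf{1},v\rangle=0$ on the tangent space.
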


Taking into account continuity of the Fourier transform 
$k\mapsto\hat Q(k)$ in the real variable $k$ at $k=0$, Proposition \ref{pro: Eigenvalues} implies the following
\begin{lemma}\label{lem: minPer} Fix any summable $Q$ with strictly positive elements. 
For any degree $d$ there is a finite 
minimal period $q_0(d)$ such that for all $q\geq q_0(d)$ 
at least one eigenvalue of the linearization 
$D S_q[eq]$ on $T \Delta^q$ is larger than one in absolute value. 
	
More generally, for any degree $d$, and any finite dimension $u$ there is a minimal period $q_0(d,u)$ such that for all $q\geq q_0(d,u)$ 
at least $u$ eigenvalues of $D S_q[eq]$ on $T \Delta^q$ are larger than one in absolute value. 
\end{lemma}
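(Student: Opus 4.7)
My plan is to reduce the statement to elementary continuity of $\hat Q$ at $k=0$, using Proposition \ref{pro: Eigenvalues} which explicitly parametrizes the spectrum of $D S_q[eq]$. The key structural fact is that since $Q$ is strictly positive and summable, $\hat Q(0) = \sum_{n\in \Z} Q(n)$ is finite and strictly positive, and the trigonometric series defining $\hat Q$ converges uniformly in $k$, hence $\hat Q$ is continuous on $[-\pi,\pi)$. In particular, $\hat Q$ is continuous at $0$.

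Next, since $d\geq 2$, I fix any constant $\alpha \in (1/d,1)$ and use continuity of $\hat Q$ at $0$ to obtain some $\delta = \delta(Q,d) > 0$ such that $\hat Q(k) > \alpha\, \hat Q(0)$ for every $k\in(-\delta,\delta)$. For any index $j \in \{1,\ldots,\lfloor q/2\rfloor\}$ satisfying $2\pi j/q < \delta$, the corresponding eigenvalue supplied by Proposition \ref{pro: Eigenvalues} then obeys
\[
\Bigl| \frac{d\,\hat Q(2\pi j/q)}{\hat Q(0)} \Bigr| \;>\; d\alpha \;>\; 1,
\]
and so exceeds one in absolute value.

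It remains to count admissible indices. The number of $j\in\{1,\ldots,\lfloor q/2\rfloor\}$ with $j < q\delta/(2\pi)$ is at least $\min\bigl(\lfloor q/2\rfloor,\, \lfloor q\delta/(2\pi)\rfloor\bigr)$, which diverges as $q\to\infty$. Hence setting $q_0(d) := \lceil 2\pi/\delta\rceil + 1$ guarantees that the index $j=1$ is admissible for all $q\geq q_0(d)$, proving the first claim. For the strengthened statement, I take $q_0(d,u)$ to exceed both $2u$ and $2\pi u/\delta$, so that the $u$ distinct indices $j = 1,2,\ldots,u$ are all admissible and yield $u$ eigenvalues of $DS_q[eq]$ of modulus strictly greater than one.

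No substantial obstacle is anticipated, because Proposition \ref{pro: Eigenvalues} has already done the nontrivial spectral work (diagonalizing the circulant linearization via the Fourier basis). The entire remaining content is continuity of $\hat Q$ at a single point, together with the elementary observation that $2\pi j/q$ traces out a finer and finer equispaced grid in $[0,\pi]$ as $q$ grows, so an arbitrarily large fraction of the grid points eventually lies inside any fixed neighborhood of $0$.
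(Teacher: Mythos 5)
Your proof is correct and fleshes out precisely the argument the paper gestures at: the paper provides no separate proof of Lemma~\ref{lem: minPer}, instead noting only that it follows from Proposition~\ref{pro: Eigenvalues} together with continuity of $\hat Q$ at $k=0$, and your argument is exactly this made explicit, with the right observation that $\hat Q(0)=\|Q\|_1>0$ so a neighborhood of $0$ exists where $\hat Q(k) > \alpha\,\hat Q(0)$ for a fixed $\alpha\in(1/d,1)$, and that the grid $2\pi j/q$ eventually places any desired number of indices inside that neighborhood.
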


We would now like to apply the radially symmetric backwards iteration on the local 
unstable manifold at the equidistribution corresponding to the strictly expanding eigenvalues of $\text{D}S_q[eq]$. 
There is the problem that we may encounter also cases of neutral eigenvalues (see Figure \ref{Fig: Nonhyperbolic} below)
\[\Big\lvert \frac{d \hat Q(2\pi\frac{j}{q})}{ \hat Q (0)} \Big\rvert =1.\]
At such points the hyperbolicity of the fixed point fails, and 
the standard stable manifold theorem for discrete-time dynamical systems for hyperbolic 
fixed points is not available in general.  

We can however bypass this difficulty by employing the $\tau$-unstable manifold theorem (e.g. Thm. 1.2.2 in \cite{Ch02}) which covers the case where the spectrum of $\text{D}S_q[eq]$ is off a circle of radius $\tau>1$. This was pointed out to us by Alberto Abbondandolo.  

\begin{thm}[Theorem 1.2.2 in \cite{Ch02}\label{thm: Chaperon} applied to the $\mathcal{C}^{\infty}$-map $S_q$]
Assume there is some $\tau>1$ such that the spectrum of $\text{D}S_q[eq]$ is the disjoint union of two sets $\sigma_1 \subset \{\vert\lambda \vert<\tau\}$ and $\sigma_2 \subset \{\vert\lambda \vert>\tau\}$ and let $U \subset \text{T}_{eq}\Delta^q$ denote the $\text{D}S_q[eq]$-stable subspace such that the spectrum of the restriction $\text{D}S_q[eq]_{\vert U}$ is $\sigma_2$. 

Then there exists a unique germ of a $\mathcal{C}^\infty$-manifold $W_\tau=W_\tau(S_q)$ having the following properties:
\begin{itemize}
	\item $S_q(W_\tau) \subset W_\tau$,
	\item $\text{T}_{eq}W_\tau=U$,
	\item The restriction ${S_q}_{\vert W_\tau}$ is the germ of a $\mathcal{C}^\infty$-diffeomorphism and 
	\item for any $u \in W_\tau$, the backwards iteration $S_q^{-n}$ tends to the equidistribution not slower than $\tau^{-n}$, where distance is measured in any local chart.
\end{itemize}
A representative of $W_\tau$ will be called a local $\tau$-unstable (or pseudo-unstable) manifold for the map $\text{D}S_q$ near the equidistribution.
\end{thm}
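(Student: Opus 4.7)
The strategy is the classical Lyapunov--Perron construction of a local invariant manifold, adapted to a pseudo-hyperbolic splitting of the linearization at radius $\tau>1$ rather than at radius $1$. I would first pick a smooth local chart identifying a neighborhood of $eq$ in $\Delta^q$ with a neighborhood of $0$ in $T_{eq}\Delta^q$, write $S_q(v)=Av+R(v)$ with $A=DS_q[eq]$ and $R(v)=O(|v|^2)$, and replace $R$ by a smoothly cut-off version $\tilde R$ supported in a ball of radius $\delta$. Since $R$ vanishes together with its differential at $0$, the global Lipschitz constant of $\tilde R$ can be made as small as one wishes by shrinking $\delta$; near $0$ the modified map $\tilde S_q=A+\tilde R$ agrees with $S_q$, so any germ of invariant manifold constructed for $\tilde S_q$ works for $S_q$ as well.

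Using the $A$-invariant spectral decomposition $T_{eq}\Delta^q=U\oplus V$ with $\sigma(A|_U)=\sigma_2$ and $\sigma(A|_V)=\sigma_1$, I would fix adapted Lyapunov norms such that $\|A_V\|\leq\tau\beta$ and $\|A_U^{-1}\|\leq\tau^{-1}\alpha$ for some $\alpha,\beta\in(0,1)$. The central object is the Banach space
\begin{equation*}
\mathcal{B}_\tau=\Bigl\{v=(v_n)_{n\leq 0}:\|v\|_\tau:=\sup_{n\leq 0}\tau^{-n}|v_n|<\infty\Bigr\}
\end{equation*}
of sequences that decay backward at rate at most $\tau^n$. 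For each $u_*\in U$ in a small ball I would look for backward orbits of $\tilde S_q$ whose $U$-component at time $0$ equals $u_*$, by inverting $A_U$ to solve the recursion backward in $U$ and by a Duhamel sum from $n=-\infty$ in $V$. This yields the fixed-point equation
\begin{equation*}
v_n=\Bigl(A_U^n u_*-\sum_{k=n}^{-1}A_U^{n-k-1}\tilde R_U(v_k),\ \sum_{j=0}^{\infty}A_V^j\tilde R_V(v_{n-j-1})\Bigr),
\end{equation*}
which the gap conditions and the smallness of $\mathrm{Lip}(\tilde R)$ turn into a contraction on a small ball of $\mathcal{B}_\tau$. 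The unique fixed point $v(u_*)$ defines the parametrization $\Phi(u_*):=v_0(u_*)$, and I set $W_\tau:=\mathrm{image}(\Phi)$.

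The properties stated in the theorem now drop out of the construction. Tangency $T_{eq}W_\tau=U$ follows from $D\tilde R(0)=0$; forward invariance $S_q(W_\tau)\subset W_\tau$ is obtained by shifting the backward sequence by one step, which again solves the Perron equation but with initial datum $P_U v_{-1}$; the restriction $S_q|_{W_\tau}$ is conjugate in coordinates to $A_U+O(\|u_*\|^2)$ and thus a local diffeomorphism onto its image; and the backward decay $\|S_q^{-n}(v)\|\leq C\tau^{-n}$ is the very definition of the norm on $\mathcal{B}_\tau$. Uniqueness of $W_\tau$ as a germ follows from the uniqueness of the Perron fixed point in a ball.

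The main obstacle is not the existence but the $\mathcal{C}^\infty$-regularity of $\Phi$. Classical spectral-gap arguments show that the parametric contraction, viewed on a scale of Banach spaces of $\mathcal{C}^k$-sections indexed by $u_*$, furnishes $\Phi\in\mathcal{C}^k$ as soon as $\|A_V\|\cdot\|A_U^{-1}\|^k<1$. Here this reads $\tau\cdot\tau^{-k}=\tau^{1-k}<1$ and is satisfied for every $k\geq 2$ precisely because $\tau>1$. Bootstrapping over $k$, while controlling the composition of $\tilde R$ with higher-order differentials of $\Phi$ and applying the implicit function theorem at each order, delivers $\Phi\in\mathcal{C}^\infty$. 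This is where the choice of a spectral radius $\tau>1$ (rather than a mere separation of $\sigma(A)$ from the unit circle) is essential: it produces widening gaps $\tau^k$ at every order of differentiation and closes the hierarchy of smoothness estimates.
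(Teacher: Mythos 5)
The paper does not prove this statement: it is imported verbatim as Theorem~1.2.2 of Chaperon's text \cite{Ch02}, specialized to the $\mathcal{C}^\infty$-map $S_q$, and there is no corresponding entry in the proofs section. So there is no authors' proof to compare against; what you offer is a self-contained reconstruction of Chaperon's pseudo-unstable manifold theorem, which is a different matter from anything the paper itself argues.

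Taken on its own, your Lyapunov--Perron construction is correct in outline and is a standard route to this result. The cutoff of the nonlinearity, the adapted Lyapunov norms separating the spectrum at the circle $\vert\lambda\vert=\tau$, the weighted backward-sequence space $\mathcal{B}_\tau$, the Perron fixed-point equation, and the derivation of invariance, tangency, local-diffeomorphism and backward-decay from the parametrization $\Phi$ are all properly assembled. The one place where the write-up is looser than it should be is the smoothness bootstrap: the $\mathcal{C}^k$ gap condition $\|A_V\|\,\|A_U^{-1}\|^k<1$ is already satisfied at $k=1$, not just at $k\geq 2$, because the adapted norms you introduce give the \emph{strict} bounds $\|A_V\|\leq\tau\beta$ and $\|A_U^{-1}\|\leq\tau^{-1}\alpha$ with $\alpha,\beta<1$, hence $\|A_V\|\,\|A_U^{-1}\|^k\leq\beta\alpha^k\tau^{1-k}<1$ for every $k\geq 1$. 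This matters because the induction on smoothness needs its base case in $\mathcal{C}^1$; once that is noted, the fibre-contraction or implicit-function bootstrap you gesture at does deliver $\mathcal{C}^\infty$, and the role of $\tau>1$ (as opposed to a mere separation of the spectrum from the unit circle) is exactly that it forces the gap condition at \emph{every} order.
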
   
Employing Proposition \ref{pro: Eigenvalues} to decide when it is possible to apply Theorem \ref{thm: Chaperon} to construct a radially symmetric boundary law solution via backwards iteration of $S_q$ and then going over from boundary laws to gradient Gibbs measures via the Theorems \ref{thm: Zachary} and \ref{thm: DLR-eq} we arrive at the following Theorem \ref{thm: existence}. The details of the construction and the promised lack of translation invariance are given in the subsections below.
\begin{thm}\label{thm: existence} Fix any period $q$ and any degree $d \geq 2$. Suppose that there is a level $\tau>1$ for which the Fourier transform $\hat{Q}$ of the transfer operator $Q$ satisfies
\begin{itemize} \label{cond: existence}
\item[i)  ] $\lvert \hat Q(2\pi\frac{j}{q}) \rvert \neq \frac{\tau}{d}\hat Q (0)$ for all indices $j\in \{1,\dots, q-1 \}$ and
\item[ii) ] the strict inequality 
$\vert \hat Q(2\pi\frac{j}{q}) \vert >  \frac{\tau}{d}\hat Q (0) $
is satisfied for some index $j\in \{1,\dots, q-1 \}$.
\end{itemize}	
Then there are 
gradient Gibbs measures of period $q$ which are not translation invariant.   
They are constructed from the non-homogeneous 
radially symmetric boundary law solutions obtained 
from  backwards iteration on the local 
$\tau$-unstable manifold $W_\tau$ of the non-linear 
map $S_q$ around the equidistribution. 
\end{thm}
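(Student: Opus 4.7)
The plan is to assemble three ingredients in sequence: the spectral decomposition from Proposition \ref{pro: Eigenvalues}, the pseudo-unstable manifold theorem \ref{thm: Chaperon} applied to $S_q$, and the passage from boundary laws to gradient Gibbs measures given by Theorems \ref{thm: Zachary} and \ref{thm: DLR-eq}. Strict positivity and summability of $Q$ guarantee that $S_q$ is a smooth self-map of the relative interior of $\Delta^q$, so the smoothness hypotheses of the stable manifold machinery are automatic throughout.

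First I would verify the spectral hypotheses of Theorem \ref{thm: Chaperon} at the fixed point $eq$. By Proposition \ref{pro: Eigenvalues}, the eigenvalues of $DS_q[eq]$ on $T_{eq}\Delta^q$ are the numbers $d\hat Q(2\pi j/q)/\hat Q(0)$ for $j\in\{1,\ldots,\lfloor q/2 \rfloor\}$. Condition (i) forces each of these to have absolute value different from $\tau$, so that the spectrum splits as a disjoint union $\sigma_1 \sqcup \sigma_2$ with $\sigma_1$ inside and $\sigma_2$ outside the circle of radius $\tau$; condition (ii) ensures that $\sigma_2$ is non-empty, so the generalized eigenspace $U \subset T_{eq}\Delta^q$ associated to $\sigma_2$ has positive dimension. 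Theorem \ref{thm: Chaperon} then produces a local $\tau$-unstable manifold $W_\tau$ through $eq$, tangent to $U$, on which $S_q$ restricts to a germ of a $\mathcal{C}^\infty$-diffeomorphism.

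Next I would build a radially symmetric, strictly positive, $q$-height-periodic boundary law by backwards iteration on $W_\tau$. Fix a root $\rho \in V$ and pick $u_0 \in W_\tau \setminus \{eq\}$ close enough to $eq$ that every backwards iterate $u_n := (S_q|_{W_\tau})^{-n}(u_0)$ stays in the domain of the diffeomorphism germ; by the last clause of Theorem \ref{thm: Chaperon}, $u_n \to eq$ at rate at least $\tau^{-n}$ in a local chart. For every oriented edge $(x,y) \in \vec L$ set $\lambda^q_{xy} := G_d(u_{d(\rho,x)})$; this is a well-defined radially symmetric assignment. Using the relation $S_q = G_{1/d}\circ H_q \circ G_d$ and the recursion $u_n = S_q(u_{n+1})$, one checks directly that $(\lambda^q_{xy})_{(x,y)\in\vec L}$ solves the boundary law equation \eqref{eq: perGenBL} at every oriented edge. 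Theorem \ref{thm: Zachary} then produces an associated fuzzy Gibbs measure $\mu^{\lambda^q} \in \mathcal{G}(\gamma^q)$, and Theorem \ref{thm: DLR-eq} converts it into a gradient Gibbs measure $\nu^{\lambda^q} := T^q_Q(\mu^{\lambda^q})$ of period $q$.

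The main obstacle is to show that $\nu^{\lambda^q}$ actually fails to be translation invariant, not merely that the boundary law itself depends on the level. By Lemma \ref{lem: BondMarginal}, the single-edge marginal of $\nu^{\lambda^q}$ at $(x,y)$ is, up to the edge-independent factor $Q_{\{x,y\}}(\zeta)$, determined by the convolution-type pairing $\sum_{\bar s \in \Z_q}\lambda^q_{xy}(\bar s)\lambda^q_{yx}(\bar s + \zeta)$; in the radially symmetric setup this pairing depends on the pair of levels $(d(\rho,x),d(\rho,y))$ through the vectors $G_d(u_n)$ and $G_d(u_{n+1})$. A priori the passage from a boundary-law pair to this pairing could collapse the non-trivial variation along the trajectory $u_0, u_1, \ldots$ to a single function of $\zeta$ and thereby annihilate the radial dependence at the level of marginals. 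Working only with the linearization of $S_q$ at $eq$ does not suffice to rule this out, since the tangent directions to $W_\tau$ could in principle lie in the kernel of the derivative of the pairing. I would therefore work beyond linearization on $W_\tau$, identifying the leading non-trivial component of $u_n - eq$ in its projection onto $U$ and using strict positivity of $Q^q$ together with the explicit form given in Lemma \ref{lem: BondMarginal} to show that the single-edge marginals of $\nu^{\lambda^q}$ at two edges of different radial depth must differ for all sufficiently small non-zero choices of $u_0 \in W_\tau$. Any translation of the Cayley tree that maps one such edge to the other would have to preserve these marginals, so the resulting contradiction yields non-translation-invariance; the detailed execution of this last step is the content of Section \ref{Sec: non-translation}.
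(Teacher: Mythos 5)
Your overall architecture — spectral hypotheses from Proposition~\ref{pro: Eigenvalues}, the $\tau$-unstable manifold from Theorem~\ref{thm: Chaperon}, the passage from boundary laws to GGMs via Theorems~\ref{thm: Zachary} and~\ref{thm: DLR-eq}, and a beyond-linearization argument for non-translation-invariance — is exactly the paper's. However, the boundary law you write down is not a solution of the boundary law equation, and this is a genuine gap. You assign $\lambda^q_{xy}:=G_d(u_{d(\rho,x)})$ for \emph{every} oriented edge, so the value depends only on the radial level of the tail $x$ and not on whether $(x,y)$ points towards or away from the root. That choice solves \eqref{eq: perGenBL} only at inward edges. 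For an outward edge $(x,y)$, with $d(\rho,x)=n$ and $d(\rho,y)=n+1$, the right-hand side of \eqref{eq: perGenBL} is a Hadamard product over the $d$ neighbors $z\in\partial\{x\}\setminus\{y\}$, and these split into one near neighbor at depth $n-1$ (whose edge $(z,x)$ points \emph{away} from $\rho$) and $d-1$ far neighbors at depth $n+1$ (whose edges $(z,x)$ point \emph{towards} $\rho$). These two groups carry different boundary law values, so the equation at outward edges does not reduce to $\lambda_{xy}=H_q(\lambda_{zx})$ and cannot be verified from the relation $u_n=S_q(u_{n+1})$ alone.

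Concretely, using $Q^q G_d(v)=\Vert Q^q\Vert_1\, S_q(v)$, your ansatz would force (for the outward edge $(x,y)$ at depth $n$)
\begin{equation*}
G_d(u_n)\;\propto\; S_q(u_{n-1})\odot\bigl(S_q(u_{n+1})\bigr)^{\odot\,d-1}\;=\;u_{n-2}\odot u_n^{\odot\,d-1},
\end{equation*}
i.e.\ $u_{n-2}\propto u_n$ for all $n$, which together with $u_n\to eq$ forces $u_0=eq$: your assignment is a boundary law only in the trivial case. The paper avoids this by treating inward and outward edges separately, via the backwards iteration \eqref{eq: RecIn} for edges in $\vec{L}^\rho$ and the distinct forward recursion \eqref{eq: RecAway} (Lemma~\ref{lem: RecAway}) for outward edges; the convergence of the outward branch to $eq$ then requires the additional argument of Lemma~\ref{lem: forewardConv}, which your proposal implicitly assumes away. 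There is also an off-by-one in the inward branch (the paper's depth-$n$ inward value is $G_d(S_q^{1-n}(u))=G_d(u_{n-1})$, not $G_d(u_n)$), but this is a relabeling issue; the inward/outward asymmetry is the substantive error. Your discussion of why non-translation-invariance requires going to second order on $W_\tau$ is accurate and matches the role of Lemma~\ref{lem: TaylorExp} in the paper.
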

As a direct corollary of Theorem \ref{thm: existence} we obtain our main result. 
\begin{thm}\label{thm: main} For any summable $Q$ and any degree $d\geq 2$ there is a finite period $q_0(d)$ such that for all $q\geq q_0(d)$ there are non-t.i. gradient Gibbs measures of period $q$.  
\end{thm}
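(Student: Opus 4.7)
The plan is to read off Theorem \ref{thm: main} as a direct consequence of Theorem \ref{thm: existence}, by exhibiting, for every sufficiently large period $q$, a level $\tau > 1$ that satisfies the two spectral conditions (i)--(ii) required there. The whole analytic core of the argument (backwards iteration on the pseudo-unstable manifold, and the passage from non-invariant boundary laws to non-t.i.\ gradient Gibbs measures) is already packaged inside Theorem \ref{thm: existence}; what remains is the elementary task of choosing $\tau$ correctly.

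First I would invoke Lemma \ref{lem: minPer} (applied with $u=1$) to obtain a finite $q_0(d)$ such that for every $q \geq q_0(d)$ at least one eigenvalue of the linearization $DS_q[eq]$ exceeds $1$ in modulus. Using the explicit spectrum given by Proposition \ref{pro: Eigenvalues}, this says precisely that
\[
\mu_q \;:=\; \max_{1 \leq j \leq q-1} \frac{d\,\lvert \hat Q(2\pi j/q) \rvert}{\hat Q(0)} \;>\; 1
\]
for all $q \geq q_0(d)$ (the range $1 \leq j \leq q-1$ covers the $\lfloor q/2 \rfloor$ distinct eigenvalues by the symmetry $\hat Q(-k)=\hat Q(k)$).

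Next, given such a $q$, I would pick $\tau$ in the open interval $(1,\mu_q)$ avoiding the finite exceptional set
\[
\Sigma_q \;:=\; \Bigl\{ \tfrac{d\,\lvert \hat Q(2\pi j/q) \rvert}{\hat Q(0)} : 1 \leq j \leq q-1 \Bigr\}.
\]
Since $\Sigma_q$ has at most $q-1$ elements and $(1,\mu_q)$ has positive length, the complement $(1,\mu_q) \setminus \Sigma_q$ is non-empty; any $\tau$ drawn from it satisfies condition (i) of Theorem \ref{thm: existence} by the avoidance, and condition (ii) because some modulus $\lvert \hat Q(2\pi j/q)\rvert \cdot d/\hat Q(0)$ (namely $\mu_q$ itself) strictly exceeds $\tau$. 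Theorem \ref{thm: existence} then yields non-translation-invariant gradient Gibbs measures of period $q$, establishing the corollary.

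There is no substantial obstacle to this argument: the only subtlety is that one must admit non-hyperbolic cases in which some eigenvalue modulus equals $1$ (or equals any prescribed value), which is exactly why the $\tau$-pseudo-unstable framework of Chaperon was invoked upstream; here it just means one cannot always take $\tau=1^+$ and must instead choose $\tau$ to lie in a spectral gap of positive width, which the finiteness of $\Sigma_q$ trivially guarantees.
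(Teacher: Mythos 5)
Your proposal is correct and follows the same route the paper intends: Lemma \ref{lem: minPer} furnishes $q_0(d)$, and for each $q\geq q_0(d)$ the finiteness of the eigenvalue set $\Sigma_q$ guarantees a level $\tau\in(1,\mu_q)$ avoiding it, so that conditions (i) and (ii) of Theorem \ref{thm: existence} hold. The paper labels Theorem \ref{thm: main} a ``direct corollary'' without spelling out the choice of $\tau$; your argument supplies exactly those missing details, and correctly so.
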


\begin{center} 
\begin{figure}[h]
\centering
{\includegraphics[width=7.5cm]{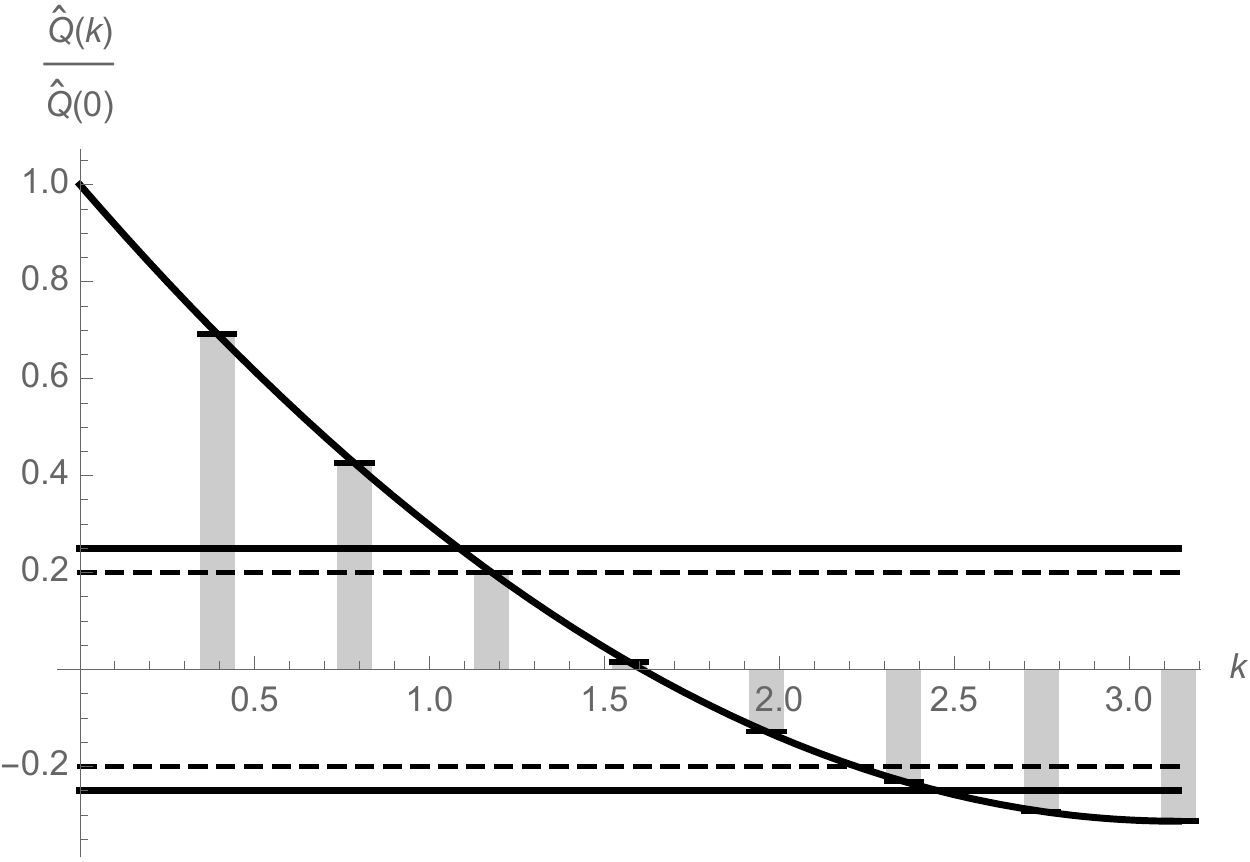}}
\caption{On the Cayley tree of order $d=5$, period $q=16$ and the inverse square model (see Section \ref{sec: Applications} below) the non-hyperbolic case occurs at an exceptional value of $a=\frac{1536}{73 \pi ^2} \approx 2.132$. The bars mark the values of $\hat{Q}(k)/\hat{Q}(0)$ at $k=j \frac{\pi}{8}$, where $j=1, \ldots,8$. The third bar from the left at $k=\frac{3\pi}{8}$ hits the upper dashed horizontal line marking the threshold $\frac{1}{d}=\frac{1}{5}$ and hence represents a neutral eigenvalue of $\text{D}S_q[eq]$. To satisfy the hypothesis of Theorem \ref{thm: existence}, we may put $\tau=\frac{5}{4}$ and slightly shift the dashed lines away from the horizontal axis to the solid horizontal lines at $\pm \frac{1}{4}$.} 
\label{Fig: Nonhyperbolic}
\end{figure}
\end{center}
\newpage
To visualize the map $S_q$ and the corresponding unstable manifold, we may consider the special case $q=2$ and represent the simplex $\Delta^2$ by the image of the map $[0,1] \ni s \mapsto (s,1-s)$. 
Then the first component of the map $S_2$ reads
\[S_2(s)=\frac{s^dQ^2(\bar{0})+(1-s)Q^2(\bar{1})}{\Vert Q^2 \Vert_1(s^d+(1-s)^d)}
\]
Dividing both the numerator and the denominator by $Q^2(\bar{1})$ we obtain the fixed point equation
\[s=\frac{s^d\tfrac{Q^2(\bar{0})}{Q^2(\bar{1})}+(1-s)^d}{(\tfrac{Q^2(\bar{0})}{Q^2(\bar{1})}+1)(s^d+(1-s)^d)}.\]
By Proposition \ref{pro: Eigenvalues}, the nontrivial eigenvalue of $S^2$ is given by $d\frac{\hat{Q}(\pi)}{\hat{Q}(0)}=d\frac{Q^2(\bar{0})-Q^2(\bar{1})}{Q^2(\bar{0})+Q^2(\bar{1})}$, so it is larger than one if and only if $\frac{Q^2(\bar{0})}{Q^2(\bar{1})}>\frac{d+1}{d-1}$.
	
\begin{figure}[h]
\centering
{\includegraphics[width=6.5cm]{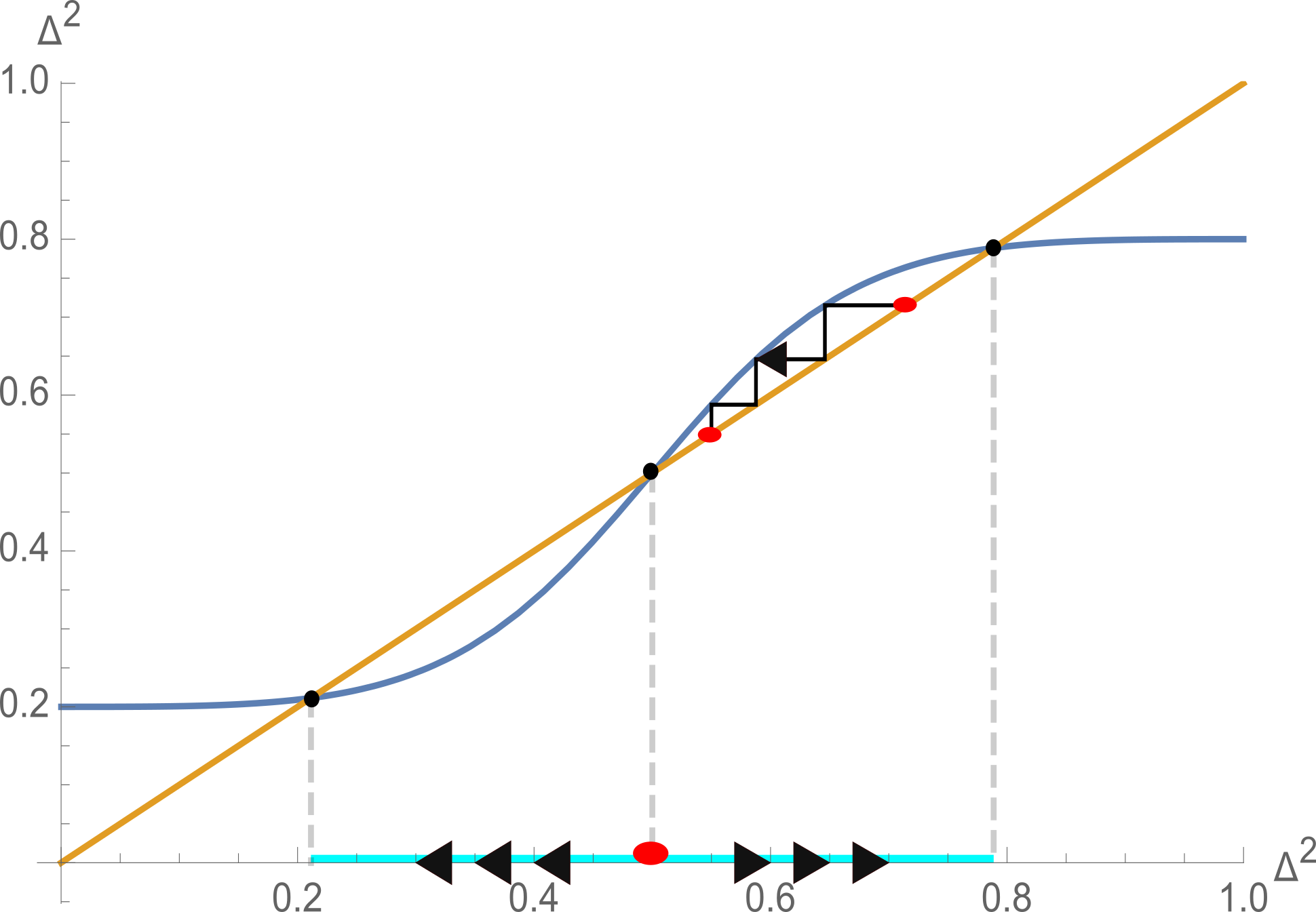}}
\caption{In the case $q=2$ the simplex is a unit interval. The picture shows the graph of the map $S_2$ where $d=3$ and $Q$ is such that $Q^2(\bar{0})/Q^2(\bar{1})=4>\frac{d+1}{d-1}$. The global unstable manifold around the equidistribution (center red point) is the cyan open interval between the two nontrivial fixed points of $S_2$. The solid line between the two graphs represents three backwards-iterations of the map $S_2$, starting at about $0.72$. }
\end{figure}

\newpage
Answering a question of a referee, we also state some result on nonexistence of non-t.i. states at fixed $q$ and sufficiently high temperatures. The proof is based on Dobrushin's uniqueness theorem.
\begin{pro}[]\label{pro: smallq}
Consider the $q$-spin model with fuzzy 
transfer operator $Q^q$ corresponding to the gradient potential $U$ and 
inverse temperature $\beta>0$.
\begin{itemize}
\item[i)] Define the $q$-variation of the gradient potential 
to be the number \[\delta_q(U):= \sup_{{1\leq a\leq q-1}\atop{ j\in \Z}}(U(|j+a|)-U(|j|)).\]
Suppose that $\beta (d+1)\delta_q(U)<2$. Then the $q$-spin model has a unique 
Gibbs measure. 
\item[ii)] Small $q$-uniqueness at sufficiently low $\beta$ holds for potentials $U$ for which there are $\alpha>0$ and $K>0$ such that $ U(j)\vert j \vert^{-\alpha} \stackrel{j \rightarrow \infty}{\rightarrow} K$ and which are additionally bounded from below. This class covers the SOS-model as well as the discrete Gaussian model ($\alpha=2$). \end{itemize}
\end{pro}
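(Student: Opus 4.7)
The plan is to verify Dobrushin's classical uniqueness criterion for the finite-state fuzzy specification $\gamma^q$ on $\Z_q^V$. Since $\Z_q$ is finite and $Q^q(\bar k)>0$ for every $\bar k$ (by summability and strict positivity of $Q$), all single-site kernels $\gamma^q_x(\cdot\mid\omega)$ are strictly positive probability measures, and Dobrushin's criterion reduces uniqueness to a condition of the form $\sum_{y \in \partial x} C(y,x) < \text{const}$, where $C(y,x)=\sup_{\omega,\omega'}\Vert\gamma^q_x(\cdot\mid\omega)-\gamma^q_x(\cdot\mid\omega')\Vert_{\mathrm{TV}}$ and $\omega,\omega'$ agree outside $y$.

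For part (i), I would first observe that $\gamma^q_x(\bar s\mid\omega)\propto \prod_{y \in \partial x} Q^q(\bar s-\omega_y)$, so changing $\omega_{y_0}$ to $\omega_{y_0}+\bar a$ alters only the factor indexed by $y_0$. The central estimate is the termwise bound
\begin{equation*}
e^{-\beta\delta_q(U)}\;\leq\;\frac{Q^q(\bar r + \bar a)}{Q^q(\bar r)}\;\leq\; e^{\beta\delta_q(U)},\qquad \bar a \in \Z_q\setminus\{\bar 0\},
\end{equation*}
obtained by comparing $Q^q(\bar r + \bar a)=\sum_n e^{-\beta U(|r+a+nq|)}$ and $Q^q(\bar r)=\sum_n e^{-\beta U(|r+nq|)}$ summand by summand, using the definition of $\delta_q(U)$ together with the symmetry $U(|j|)=U(|{-j}|)$ to cover variations of both signs. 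From here a standard total-variation calculation (using $\mathbb{E}_{\gamma^q_x(\cdot\mid\omega')}[\gamma^q_x(\cdot\mid\omega)/\gamma^q_x(\cdot\mid\omega')]=1$) yields a per-edge coefficient $C(y_0,x)$ of order $\beta\delta_q(U)$ for small argument, and summing over the $d+1$ neighbors of $x$ produces the claimed sufficient condition $\beta(d+1)\delta_q(U)<2$.

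For part (ii) this approach breaks down whenever $U$ grows strictly faster than linearly, since then $\delta_q(U)=\infty$; for instance, for the discrete Gaussian $U(|j+a|)-U(|j|)=2aj+a^2$ is unbounded in $j$. Instead I would run Dobrushin in terms of the oscillation of the \emph{fuzzy effective potential} $\tilde U^q(\bar k):=-\beta^{-1}\log Q^q(\bar k)$ on $\Z_q$, which controls the same TV distance but avoids the pathology of $\delta_q(U)$. The key input is that for fixed $q$ and $U(j)\sim K|j|^\alpha$ bounded below, a Poisson summation (or Riemann-sum) estimate gives
\begin{equation*}
Q^q(\bar k)\;=\;\frac{1}{q}\int_{\mathbb{R}} e^{-\beta U(|y|)}\,dy\;+\;\mathrm{err}(\beta,q,\bar k),
\end{equation*}
in which the leading term is $\bar k$-independent and the error is uniformly small in $\bar k$ as $\beta\to 0$, at a rate governed by the characteristic width $\beta^{-1/\alpha}$ of $e^{-\beta U}$, which diverges. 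Consequently $\mathrm{osc}_{\bar k}\log Q^q\to 0$ as $\beta\to 0$ at fixed $q$, and the Dobrushin estimate of part (i), with $\beta\delta_q(U)$ replaced by $\mathrm{osc}_{\bar k}\log Q^q(\bar k)$, closes at sufficiently low $\beta$.

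The hard part lies in making the Poisson/Riemann-sum estimate effective in part (ii): one must show that the discretization error $\sum_n e^{-\beta U(|k+nq|)}-q^{-1}\int_{\mathbb{R}}e^{-\beta U(|y|)}\,dy$ is of strictly smaller order than the integral itself, uniformly over the finitely many residues $\bar k\in\Z_q$. For the discrete Gaussian this is immediate from the Jacobi theta-function identity (the correction is super-exponentially small in $1/\beta$), for SOS it follows from a direct geometric-series computation, and in general it can be handled by an Euler--Maclaurin argument that exploits that the characteristic length scale $\beta^{-1/\alpha}$ of $e^{-\beta U}$ dwarfs the fixed lattice spacing $q$ as $\beta\to 0$. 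Part (i) is essentially a routine Dobrushin computation once the termwise bound on $Q^q$-ratios is in place, the only subtlety being the correct treatment of the signed supremum in the definition of $\delta_q(U)$ via the symmetry of $U$.
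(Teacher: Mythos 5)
Your overall strategy matches the paper's: apply Dobrushin's uniqueness criterion to the finite-state fuzzy specification, control the oscillation of the single-site kernels by the oscillation of $\log Q^q$, and for part (i) bound that oscillation termwise by $\beta\delta_q(U)$. The paper dispatches the Dobrushin step by citing Proposition~8.8 of Georgii (which for a pair potential on a graph of degree $d+1$ gives uniqueness when $(d+1)\,\delta(\log Q^q)<2$), then proves $\delta(\log Q^q)\le\beta\delta_q(U)$ by exactly the termwise ratio estimate you propose. Your hands-on TV computation would reproduce this, though you should be careful to use the sharp Dobrushin-type bound (the one involving a factor $\tfrac14$ times the oscillation of the log-ratio) rather than the cruder $1-e^{-2\delta}$ bound, or you will land on the weaker threshold $\beta(d+1)\delta_q(U)<\tfrac12$.

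For part (ii) your conceptual picture — as $\beta\downarrow 0$ the characteristic width $\beta^{-1/\alpha}$ of $e^{-\beta U}$ dwarfs the lattice spacing, so the $q$ residue sums defining $Q^q(\bar k)$ all approximate the same integral — is again exactly the paper's. But the paper's implementation is by dominated convergence, not Euler--Maclaurin: it rescales by $\gamma=\beta^{1/\alpha}$, writes $Q^q(\bar k)$ as $\int g_{a,\gamma}\,d\lambda$ for a piecewise-constant $g_{a,\gamma}$, shows $g_{a,\gamma}(x)\to e^{-q^\alpha|x|^\alpha}$ pointwise independently of $a$, and closes by exhibiting a dominating function. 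This is the more robust route: your Euler--Maclaurin (or Poisson summation) sketch would require derivative control on $e^{-\beta U(|\cdot|)}$ which the stated hypotheses — only the asymptotics $U(j)|j|^{-\alpha}\to K$ and a lower bound on $U$, with $U$ defined merely on $\Z$ — do not supply. Your model-specific shortcuts (Jacobi theta for the Gaussian, geometric series for SOS) are of course fine in those instances, but the general claim needs the DCT-style argument. So: same approach, but the paper's execution of (ii) is both simpler and the one that actually closes under the given hypotheses.
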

\begin{rk}
Note that ii) of Proposition \ref{pro: smallq}
extends i) to exponents $\alpha>1$ by a different proof, which does not give explicit bounds on $\beta$.
\end{rk}
In the following three subsections we assume $q \geq q_0(d)$ such that existence of $\tau$-unstable manifold $W_\tau$ of the non-linear 
map $S_q$ (see \eqref{eq: TheMapSq}) around the equidistribution is given.
\subsection{Boundary laws in forward and backwards direction}\label{subsec: recEq}
In this subsection we explicitly construct a radially symmetric  boundary law which is not translation invariant via backwards iteration.

Let $u=(u_{1}, \ldots,u_q)$ 
be any starting value chosen from the local unstable manifold $W_\tau$ of the non-linear 
map $S_q$ (see \eqref{eq: TheMapSq}) around the equidistribution. 
Fix any vertex $\rho \in V$  which we will refer to as the \textit{root}. 

First we define the boundary law values at edges $(x,y) \in \vec{L}^\rho$ pointing towards $\rho$, i.e. $d(\rho,x)=d(\rho,y)+1=n$ for some $n \in \mathbb{N}$ by setting \begin{equation} \label{eq: RecIn}
\lambda^{u}_{xy}=G_d(S_q^{1-n}(u))=\frac{(S_q^{1-n}(u))^{\odot d}}{\Vert (S_q^{1-n}(u))^{\odot d}\Vert_1} .
\end{equation} Here, $S_q^{1-n}$ denotes the $n-1$th iteration of the inverse $S_q^{-1}$.
By radial symmetry of the construction and definition of the function $S_q$, the boundary law equation is solved at any such edge $(x,y)$ pointing towards $\rho$. By construction they converge 
to the trivial fixed point one as the distance goes to infinity. 

For edges pointing away from the root the recursion formula for the boundary law reads as follows:
\begin{lemma}\label{lem: RecAway}
Consider an infinite path $\{\rho=x_0,x_1\},\{x_1,x_2\}, \ldots$ where $d(x_n,\rho)=n$. Then the recursion formula for the boundary law values at edges pointing away from the root reads as follows:
\begin{equation}\label{eq: RecAway} 
\begin{cases}
\lambda^{u}_{\rho x_1} &=\frac{S_q(u)^{\odot d}}{\Vert S_q(u)^{\odot d} \Vert_1 } \\
\lambda^{u}_{x_nx_{n+1}} &=\frac{Q^q(\lambda^{u}_{x_{n-1}x_n}) \odot (S_q^{1-n}(u_0))^{\odot d-1}}{\Vert Q^q(\lambda^{u}_{x_{n-1}x_n}) \odot (S_q^{1-n}(u_0))^{\odot d-1} \Vert_1}, \ n \in \mathbb{N}
\end{cases}
\end{equation}
\end{lemma}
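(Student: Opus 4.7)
The plan is a direct verification: we view the formulas \eqref{eq: RecAway} as what one is forced to write down by the boundary law equation \eqref{eq: perGenBL} once the values on inward-pointing edges have been fixed by \eqref{eq: RecIn} and radial symmetry is imposed. The only algebraic identity needed is
\[
Q^q\,G_d(v)\;=\;\lVert Q^q\rVert_1\,S_q(v),\qquad v\in\Delta^q,
\]
which is immediate from the definitions of $G_d$ and $S_q$ together with the fact that every row-sum of the circulant matrix $Q^q$ equals $\lVert Q^q\rVert_1$.

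For the base case (first line of \eqref{eq: RecAway}) I would plug the edge $(\rho,x_1)$ into \eqref{eq: perGenBL}. The product over $z\in\partial\rho\setminus\{x_1\}$ consists of $d$ equal factors, because by radial symmetry and \eqref{eq: RecIn} each sibling $z$ of $x_1$ contributes $\lambda^u_{z\rho}=G_d(S_q^{0}(u))=G_d(u)$. Using the identity above, the product collapses to a positive constant times $S_q(u)^{\odot d}$, and normalization to $\Delta^q$ yields exactly $G_d(S_q(u))$ as claimed.

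For the recursive step at $x_n$, $n\geq 1$, I split $\partial x_n\setminus\{x_{n+1}\}$ into the unique parent $x_{n-1}$ and the $d-1$ remaining neighbors, each of which is a sibling of $x_{n+1}$ at distance $n+1$ from $\rho$. Radial symmetry and \eqref{eq: RecIn} make every sibling contribution equal to $Q^q G_d(S_q^{-n}(u))$, which by the identity is proportional to $S_q^{1-n}(u)$; the parent factor is $Q^q\lambda^u_{x_{n-1}x_n}$, already defined at the previous step of the recursion. Multiplying these $d$ factors and normalizing gives exactly the second line of \eqref{eq: RecAway}.

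The main thing to be careful about is orientation bookkeeping: for edges towards $\rho$ the exponent in \eqref{eq: RecIn} encodes the distance of the initial vertex, while for edges away from $\rho$ the recursion advances one step per vertex visited, so the offsets of $S_q^{1-n}$ have to match correctly. Once this is done no further difficulty arises: by Theorem \ref{thm: Chaperon} the backwards iterates $S_q^{-n}(u)$ are well defined for $u\in W_\tau$, strict positivity of $Q^q$ keeps every occurring vector in the interior of $\Delta^q$, and by radial symmetry the verification along one ray extends to every interior vertex of the tree.
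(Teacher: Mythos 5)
Your proposal is correct and mirrors the paper's own proof: both apply the boundary law equation \eqref{eq: perGenBL} at $x_n$, split $\partial x_n\setminus\{x_{n+1}\}$ into the parent edge and the $d-1$ sibling edges pointing towards $\rho$, and use the algebraic identity $Q^q G_d(v) \propto S_q(v)$ (the paper states it equivalently as $Q^q\lambda \propto S_q(G_{1/d}(\lambda))$) to collapse the sibling contributions into a power of $S_q^{1-n}(u)$. The minor aside about row sums of $Q^q$ is not actually needed for the identity, but that does no harm.
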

\subsection{Convergence of boundary law values pointing away}\label{subsec: ConvNontrBL} 
By definition of the local unstable manifold the boundary law values \eqref{eq: RecIn} at edges pointing towards the root $\rho$ converge to the equidistribution as the distance to the root tends to infinity. The recursion formula \eqref{eq: RecAway} describing the boundary law values at edges pointing away from the root is more complicated. Nonetheless we will show that they also converge to the equidistribution as the distance to the root tends to infinity, a result which will then be employed to prove the lack of translation invariance of the associated gradient states stated in the Theorems \ref{thm: existence} and \ref{thm: main}.

Define 
$a_n:=S^{1-n}(u)^{\odot d-1}$. 
We know that for $u$ in $W_\tau$ we have 
$a_n\rightarrow eq^{\odot d-1}=C(q,d)eq$. 

It is convenient to discuss \eqref{eq: RecAway} in terms of the following map $F_a:\Delta^q\rightarrow \Delta^q$ 
where
\begin{equation*} 
\begin{split}
F_{a}(z)&=\frac{ Q^q z\odot a }{\Vert Q^q z\odot a \Vert_{1}}.
\end{split}
\end{equation*}
Then the convergence result reads
\begin{lemma}\label{lem: forewardConv} Suppose that $a_n$ are defined as above and $z:=\frac{S(u)^{\odot d}}{\Vert S(u)^{\odot d} \Vert_1}$, for $u \in W_\tau$ 
chosen in a sufficiently small neighborhood of the equidistribution $eq$ in $W_\tau$.
	
Then we have the convergence result
$F_{a_n}F_{a_{n-1}}\dots F_a(z) \stackrel{n \rightarrow \infty}{\rightarrow} eq$.
This means that the boundary law functions $\lambda^{u}_{x_nx_{n+1}}$ converge to the equidistribution along any infinite path pointing away from the root. 
\end{lemma}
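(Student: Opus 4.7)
The plan is to treat $F_{a_n}$ as an asymptotically vanishing perturbation of a fixed linear map that contracts toward $eq$. Since $u\in W_\tau$ implies $S_q^{1-n}(u)\to eq$ at geometric rate $\tau^{-n}$ by the pseudo-unstable manifold theorem, the weights $a_n=S_q^{1-n}(u)^{\odot d-1}$ converge to the constant vector $eq^{\odot d-1}=q^{1-d}\mathbf{1}$. The natural limit is therefore
\[\phi(z):=\frac{Q^q z\odot\mathbf{1}}{\lVert Q^q z\odot\mathbf{1}\rVert_1}=\frac{Q^q z}{\lVert Q^q z\rVert_1}.\]
Because $Q^q$ is symmetric and circulant, each of its row sums equals $\hat Q(0)$, so $\mathbf{1}^\top Q^q z=\hat Q(0)$ on the affine simplex and $\phi(z)=Q^q z/\hat Q(0)$ is in fact an \emph{affine linear} self-map of $\Delta^q$ with fixed point $eq$.

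Next I would carry out the spectral analysis of $\phi$. The discrete Fourier basis diagonalizes the circulant matrix $Q^q$ with eigenvalues $\hat Q(2\pi j/q)$, $j=0,\ldots,q-1$. On the tangent hyperplane $T_{eq}\Delta^q=\{v\colon \mathbf{1}^\top v=0\}$, which is the orthogonal complement of the Perron direction, the eigenvalues of $\phi$ are thus $\hat Q(2\pi j/q)/\hat Q(0)$ for $j=1,\dots,q-1$. Strict positivity of $Q$ forces the triangle inequality $|\hat Q(t)|\le \hat Q(0)$ to be strict for $t\not\equiv 0$, hence
\[c:=\max_{1\le j\le q-1}\bigl|\hat Q(2\pi j/q)/\hat Q(0)\bigr|<1,\]
and consequently $\lVert \phi(z)-eq\rVert_2\le c\,\lVert z-eq\rVert_2$ globally on $\Delta^q$.

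The last ingredient is the perturbation estimate. Since $a_n\to q^{1-d}\mathbf{1}$ at geometric rate, elementary continuity of $F_a$ in $a$ yields a sequence $\varepsilon_n\to 0$ with $\lVert F_{a_n}(z)-\phi(z)\rVert_2\le \varepsilon_n$ uniformly over any compact neighborhood of $eq$ in the interior of $\Delta^q$ (where $Q^q z$ is bounded away from $0$). Combining contraction and perturbation gives the recursion
\[\lVert z_{n+1}-eq\rVert_2\le c\,\lVert z_n-eq\rVert_2+\varepsilon_n,\]
where $z_0=z=S_q(u)^{\odot d}/\lVert S_q(u)^{\odot d}\rVert_1$ is close to $eq$ whenever $u$ is close to $eq$, thanks to $S_q(eq)=eq$ and continuity. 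The standard Grönwall-type observation that $e_{n+1}\le c e_n+\varepsilon_n$ with $c<1$ and $\varepsilon_n\to 0$ forces $e_n\to 0$ (split the telescoped sum into a burn-in and a geometric tail) then delivers $z_n\to eq$; translating back through \eqref{eq: RecAway} yields $\lambda^u_{x_n x_{n+1}}\to eq$ along any ray pointing away from the root.

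The main technical obstacle is the bootstrap: the perturbation bound $\varepsilon_n$ is only valid while $z_n$ remains in a fixed neighborhood $B_\delta(eq)$ in the positive simplex. This is handled by noting that $c<1$ means $\lVert z_n-eq\rVert_2\le\delta$ together with $\varepsilon_n\le(1-c)\delta$ keeps the next iterate inside $B_\delta(eq)$; shrinking the neighborhood of $eq$ in $W_\tau$ from which $u$ is drawn makes both the initial error and all subsequent $\varepsilon_n$ small enough, so an induction closes the argument.
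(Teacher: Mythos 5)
Your argument is correct, and it reaches the same target recursion $e_{n+1}\le \gamma\,e_n+(\text{vanishing perturbation})$ as the paper, but it packages the two ingredients in a slightly different way. The paper establishes a \emph{locally uniform contraction of the time-dependent maps themselves}: its Lemma on uniform contraction shows, via joint continuity of $(z,a)\mapsto \text{D}F_a[z]$ near $(eq,eq^{\odot d-1})$, that there is a single $\gamma<1$ working for all $F_{a_n}$ once $a_n$ and the iterates stay in suitable neighborhoods; it then compares the orbit of $z$ to the orbit of $eq$ under the same nonautonomous dynamics, telescoping the second deviation via $b_n=\Vert F_{a_n}(eq)-eq\Vert$. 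You instead factor through the \emph{limiting autonomous map} $\phi(z)=Q^q z/\hat Q(0)$, observe that it is linear on $\Delta^q$ with global contraction ratio $c=\max_{1\le j\le q-1}|\hat Q(2\pi j/q)|/\hat Q(0)<1$ (using strict positivity of $Q$, exactly the quantity the paper calls $M$), and absorb the $n$-dependence into a uniform perturbation bound $\Vert F_{a_n}-\phi\Vert\le\varepsilon_n\to 0$ on a compact neighborhood. This buys a cleaner, global contraction statement for $\phi$, at the price of needing the explicit $\varepsilon_n$-uniformity over the neighborhood rather than just at $eq$. Both routes then need the same bootstrap induction to keep iterates inside the neighborhood where the estimates are valid, and you handle this correctly by shrinking the admissible $u$'s so that $z_0$ and all $\varepsilon_n$ are below $(1-c)\delta$. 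One small point worth making explicit if you write this up: the uniformity of $\varepsilon_n$ over the relevant compact set also relies on $Q^q z$ being bounded away from $0$, which holds for all $z\in\Delta^q$ because $Q^q$ has strictly positive entries and $\Vert z\Vert_1=1$, so the denominator in $F_a$ cannot degenerate.
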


\subsection{Transfer of lack of translation invariance}\label{Sec: non-translation}
In this subsection we will show that the gradient Gibbs measures to the boundary law constructed from a starting value $u \in W_\tau$ is not translation invariant. We show that uncountably many such $u$ exist.
\begin{pro}\label{pro: non-translation} Fix $q \geq q_0(d)$. Then there exist uncountably many starting values $u$ such that for any fixed $\rho \in V$ the gradient Gibbs measure $\nu^{\lambda^{u}}$ for the boundary law $\lambda^{u}$ with values given by the equations \eqref{eq: RecIn} and \eqref{eq: RecAway} is not translation invariant.
\end{pro}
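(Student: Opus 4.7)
My plan is to exploit the single-edge marginal formula of Lemma~\ref{lem: BondMarginal} together with the convergence of $\lambda^u$ to the equidistribution $eq$ along any ray emanating from the root $\rho$. Convergence towards the root is built into \eqref{eq: RecIn} and the definition of the pseudo-unstable manifold $W_\tau$, while convergence away from the root is Lemma~\ref{lem: forewardConv}. If $\nu^{\lambda^u}$ were translation invariant, then the single-edge marginals at edges belonging to a common orbit of the translation group would coincide; combined with the boundary-law convergence at faraway edges this would force all such marginals to equal the free-state marginal $\zeta \mapsto Q(\zeta)/\sum_j Q(j)$. I will then contradict this for a curve of starting values $u \in W_\tau$ close to $eq$ via a second-order Fourier expansion.

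To make the first step precise I would fix a generator $a_j$ and let $x_1 = \rho a_j$. The group representation of the Cayley tree recalled in Section~\ref{subsec: BasicDef} shows that for any generator $a_j$ the translation group acts transitively on the set of oriented edges of the form $(x, x a_j)$, since left multiplication by $y x^{-1}$ sends $(x, x a_j)$ to $(y, y a_j)$. I would then choose an infinite non-backtracking ray $\rho = x_0, x_1, x_2, \ldots$ that revisits the generator label $a_j$ infinitely often (e.g.\ by cycling through the $d+1$ generators). Along this ray both $\lambda^u_{x_n x_{n+1}}$ and $\lambda^u_{x_{n+1} x_n}$ tend to $eq$, so by Lemma~\ref{lem: BondMarginal} the single-edge marginals converge to the free-state marginal. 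Translation invariance then forces the marginal at $(\rho, x_1)$ to equal the free-state marginal, which by Lemma~\ref{lem: BondMarginal} is equivalent to the function
\begin{equation*}
f_u(\bar\zeta) \ := \ \sum_{\bar s \in \mathbb{Z}_q} \lambda^u_{\rho x_1}(\bar s)\, \lambda^u_{x_1 \rho}(\bar s + \bar\zeta)
\end{equation*}
being independent of $\bar\zeta \in \mathbb{Z}_q$.

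To contradict this I would pick a real eigenvector $v$ of $\mathrm{D}S_q[eq]$ whose eigenvalue $\mu$ satisfies $|\mu|>\tau>1$. By Proposition~\ref{pro: Eigenvalues} the vector $v$ may be chosen as a real Fourier mode $v_{\bar s}=\cos(2\pi k \bar s/q)$ for a suitable $k\in\{1,\ldots,\lfloor q/2\rfloor\}$, so that $\hat v$ is supported on $\{\pm k\}$. Since $T_{eq}W_\tau$ equals the pseudo-unstable subspace, there is a smooth curve $\epsilon\mapsto u(\epsilon)\in W_\tau$ with $u(0)=eq$ and $u'(0)=v$. A short direct computation gives $\mathrm{D}G_d[eq]=d\cdot\mathrm{Id}$ on the tangent space to the simplex; combined with \eqref{eq: RecIn}, \eqref{eq: RecAway} and the chain rule this yields
\begin{equation*}
\lambda^{u(\epsilon)}_{x_1 \rho} \ = \ eq + d\epsilon v + O(\epsilon^2), \qquad \lambda^{u(\epsilon)}_{\rho x_1} \ = \ eq + d\mu\epsilon v + O(\epsilon^2).
\end{equation*}
Using $\sum_{\bar s} v_{\bar s}=0$ the $O(\epsilon)$ contribution to $f_{u(\epsilon)}(\bar\zeta)$ cancels, and one arrives at
\begin{equation*}
f_{u(\epsilon)}(\bar\zeta) \ = \ \frac{1}{q} + d^2 \mu \epsilon^2 \sum_{\bar s} v_{\bar s}\, v_{\bar s+\bar\zeta} + O(\epsilon^3).
\end{equation*}
The $\mathbb{Z}_q$-Fourier transform of the autocorrelation $\bar\zeta\mapsto \sum_{\bar s} v_{\bar s} v_{\bar s+\bar\zeta}$ equals $|\hat v(\cdot)|^2$, which is strictly positive at $\pm k\neq 0$. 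Hence $f_{u(\epsilon)}$ is non-constant for all sufficiently small $\epsilon\neq 0$, contradicting the conclusion of the previous paragraph.

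The curve $\epsilon\mapsto u(\epsilon)$ therefore furnishes an uncountable family of starting points on $W_\tau$ producing gradient Gibbs measures that fail to be translation invariant. The main obstacle is the bookkeeping of the second-order expansion of $f_u$: the cancellation of the first-order term hinges crucially on the simplex-tangency $\sum_{\bar s} v_{\bar s}=0$, and one must check that no $O(\epsilon)$ contribution survives from the higher-order Taylor coefficients of $G_d$, $S_q$, or from the $\ell^1$-normalization in \eqref{eq: RecIn}, \eqref{eq: RecAway}. A secondary technical point is that $W_\tau$ is only a germ, but since arbitrarily small $\epsilon$ suffice the entire argument is confined to a small neighborhood of $eq$ on which a representative of $W_\tau$ is well-defined.
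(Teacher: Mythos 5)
Your proof is correct and takes essentially the same route as the paper's: reduce translation invariance to constancy of a single-edge marginal function via Lemma~\ref{lem: BondMarginal} and the convergence of $\lambda^u$ to $eq$ along rays (Lemma~\ref{lem: forewardConv} together with \eqref{eq: RecIn}), and then break constancy by a second-order expansion of that function on the pseudo-unstable manifold. The first half of your argument mirrors the paper's Lemma~\ref{lem: originAndDistance}, which expresses the non-constancy criterion as $\langle Q^q(u^{\odot d})^{\odot d}, (T_{\bar j}u)^{\odot d}-u^{\odot d}\rangle \neq 0$ for some $\bar j$; your function $f_u$ is proportional to exactly this pairing.

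The only genuine difference is in the second half. The paper (Lemma~\ref{lem: TaylorExp}) parametrizes $W_\tau$ by an arbitrary $v\in T_{eq}W_\tau$, obtains the quadratic form $\langle Q^q v, T_{\bar j}v-v\rangle$ from the expansion, and extracts a lower bound $\max_{\bar j}|\cdot|\geq |\langle Q^q v,v\rangle|\geq\tau\|v\|^2$ by averaging over $\bar j$, restricting $v$ to the positive or negative eigenspace so that the quadratic form is sign-definite. You instead fix $v$ to be a single real Fourier-mode eigenvector, so that $Q^q v$ is a scalar multiple of $v$ and the quadratic term becomes the explicit cyclic autocorrelation $\sum_{\bar s}v_{\bar s}v_{\bar s+\bar\zeta}$, whose non-constancy is immediate because its $\Z_q$-Fourier transform is supported away from zero. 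Both routes are valid; yours buys a more explicit, computation-free-of-inequalities argument at the small cost of working with a one-parameter curve rather than an open neighborhood in $T_{eq}W_\tau$, which is entirely sufficient for the ``uncountably many $u$'' conclusion. Your identified technical caveats (cancellation of $O(\epsilon)$ terms via simplex-tangency, higher-order contributions from the normalizations, $W_\tau$ being a germ) are precisely the points one must verify, and all of them are handled as you expect: every Taylor coefficient of order $\geq 1$ of a $\Delta^q$-valued map sums to zero, so the only surviving second-order contribution is the cross term you compute.
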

We will now present the main ideas of the proof of Proposition \ref{pro: non-translation} in terms of the following two Lemmas.
To improve readability, we will use the following notation:
Let \[T_{\bar{j}}: \mathbb{R}^{\Z_q} \rightarrow  \mathbb{R}^{\Z_q} \, ; \, (T_{\bar{j}}w)(\cdot):=w(\cdot+\bar{j}) \] denote the cyclic shift of the index by $\bar{j} \in \Z_q$.

Further let $\langle \cdot, \cdot \rangle$ denote the Euclidean scalar product in $\mathbb{R}^q$.

We will first compare the marginals' distribution of the gradient Gibbs measure $\nu^{\lambda_q}$ along an edge starting from the root $\rho$ with the respective marginal along an edge at far distance from $\rho$. The convergence result for the boundary laws stated in Lemma \ref{lem: forewardConv} above then gives the following lemma.
\begin{lemma}\label{lem: originAndDistance}
Let $u \in W_\tau$ be any starting value for the recursion \eqref{eq: RecAway}. 
If there is some $\bar{j} \in \Z_q$ such that
\begin{equation} \label{eq: NotTheEquidistribution}
\langle Q^q(u^{\odot d})^{\odot d}, (T_{\bar{j}} u)^{\odot d}-u^{\odot d}\rangle \neq 0
\end{equation}
then the gradient Gibbs measure $\nu^{\lambda^u}$ is not translation invariant.
\end{lemma}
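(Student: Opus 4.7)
The plan is by contradiction: suppose $\nu^{\lambda^u}$ is translation invariant and deduce that the inner product in \eqref{eq: NotTheEquidistribution} vanishes for every $\bar j \in \Z_q$. Under the left action of $G_d$, an oriented edge $(x,y)=(x,xa_i)$ is mapped to $(gx,gxa_i)$, so its \emph{type} $x^{-1}y=a_i$ is preserved, and edges of the same type form a single $G_d$-orbit. Therefore translation invariance of $\nu^{\lambda^u}$ forces the single-edge marginals at any two edges of the same type to coincide, so it suffices to exhibit one root-incident and one far edge of the same type with different marginals.

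Fix generators $a_i,a_j$ with $j\neq i$ and consider the infinite ray $\rho=x_0,x_1,x_2,\dots$ defined by $x_{2k}=(a_ia_j)^k$ and $x_{2k+1}=(a_ia_j)^k a_i$. A direct check shows $x_{2k}^{-1}x_{2k+1}=a_i$, so every edge $(x_{2k},x_{2k+1})$ has the same type $a_i$ as $(\rho,x_1)=(\rho,a_i)$. Applying Lemma \ref{lem: BondMarginal} at the root-incident edge, and substituting $\lambda^{u}_{x_1\rho}=G_d(u)\propto u^{\odot d}$ (from \eqref{eq: RecIn} with $n=1$) together with $\lambda^{u}_{\rho x_1}=S_q(u)^{\odot d}/\Vert S_q(u)^{\odot d}\Vert_1\propto (Q^q u^{\odot d})^{\odot d}$ (from \eqref{eq: RecAway}), and using $T_\zeta(u^{\odot d})=(T_\zeta u)^{\odot d}$, yields
\begin{equation*}
\nu^{\lambda^u}(\eta_{(\rho,x_1)}=\zeta)\;\propto\;Q(\zeta)\,\bigl\langle (Q^q u^{\odot d})^{\odot d},\;(T_\zeta u)^{\odot d}\bigr\rangle.
\end{equation*}

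For the far edges of the same type, I use that $\lambda^{u}_{x_{2k+1}x_{2k}}=G_d(S_q^{-2k}(u))\to G_d(eq)=eq$ by the defining property of $W_\tau$ and continuity of $G_d$, while $\lambda^{u}_{x_{2k}x_{2k+1}}\to eq$ by Lemma \ref{lem: forewardConv}. Plugging both into Lemma \ref{lem: BondMarginal} and normalizing gives
\begin{equation*}
\nu^{\lambda^u}(\eta_{(x_{2k},x_{2k+1})}=\zeta)\;\xrightarrow[k\to\infty]{}\;\frac{Q(\zeta)}{\Vert Q\Vert_1},
\end{equation*}
whose $\zeta$-dependence is purely through the factor $Q(\zeta)$. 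If $\nu^{\lambda^u}$ were translation invariant, the left-hand side would be constant in $k\geq 0$ and equal to $\nu^{\lambda^u}(\eta_{(\rho,x_1)}=\zeta)$, so comparing with the preceding display forces $\zeta\mapsto\langle (Q^q u^{\odot d})^{\odot d},(T_\zeta u)^{\odot d}\rangle$ to be independent of $\zeta$, i.e.\ $\langle (Q^q u^{\odot d})^{\odot d},(T_{\bar j}u)^{\odot d}-u^{\odot d}\rangle=0$ for every $\bar j\in\Z_q$, contradicting the hypothesis.

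The main obstacle is the correct handling of the orbit structure of translations on oriented edges: the action is only transitive within each type-class, so one must compare the root-incident edge with far edges of the \emph{same} type, arranged via the alternating-generator path above. Once this is set up, the proof is a direct evaluation of Lemma \ref{lem: BondMarginal} at the two extreme positions along this path together with the convergence of both incident boundary laws to the equidistribution (Lemma \ref{lem: forewardConv} for the outgoing one, the defining backward-contraction property of $W_\tau$ for the incoming one).
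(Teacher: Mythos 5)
Your proof is correct and follows essentially the same route as the paper's: compare the single-edge marginal at the root-incident edge (where Lemma \ref{lem: BondMarginal} together with the explicit values $\lambda^u_{x_1\rho}\propto u^{\odot d}$ and $\lambda^u_{\rho x_1}\propto (Q^qu^{\odot d})^{\odot d}$ produce the inner product in \eqref{eq: NotTheEquidistribution}) with the marginal at far edges along a ray, where Lemma \ref{lem: forewardConv} and the backward contraction on $W_\tau$ force convergence to the free marginal $Q(\cdot)/\Vert Q\Vert_1$. Your explicit alternating-generator ray makes the relevant translation map $(\rho,x_1)$ to $(x_{2k},x_{2k+1})$ on the nose, which is a clean way to realize what the paper arranges via a path condition together with radial symmetry of the boundary law, but it is a presentational refinement rather than a different argument.
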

In the next step we may represent the ($\mathcal{C}^\infty$) local $\tau$-unstable manifold $W_\tau$ in a neighborhood of the equidistribution by the graph of a $\mathcal{C}^\infty$-function defined on a neighborhood of the equidistribution in the tangent space $\text{T}_{eq}W_\tau$.
For more details see also the proof of Thm.1.4.1 in \cite{Ch02} where existence of such a map is already shown to construct the local unstable manifold. 
Performing a second-order Taylor expansion then yields the third statement of the following Lemma.
\begin{lemma}\label{lem: TaylorExp}
	\mbox{}\\ 
\begin{enumerate}
\item We may parametrize any element $u\in W_\tau$ near the equidistribution 
in terms of $v\in \text{T}_{eq} W_\tau$ 
in the corresponding stable linear space, in the form $u(v)=eq + v + h(v)$ 
with 
$h(v)\in \text{T}_{eq}\Delta^q$ in the orthogonal space to $\text{T}_{eq}W_\tau$ in $\text{T}_{eq}\Delta^q$, 
describing the deviation of the unstable manifold from its tangent space. 
\item By Theorem \ref{thm: Chaperon} we have $\text{T}_{eq} W_\tau=T^+_{eq} W_\tau \oplus T^-_{eq} W_\tau$ where \[
T^{+(-)}_{eq} W_\tau=\text{span}\{w \in \text{T}_{eq}\Delta^q \mid w \text{ is eigenvector of } \text{D}S_q \text{ to }\lambda>\tau \, (\lambda<-\tau)\}. \]
\item Assume that $T^{+,(-)}_{eq} W_\tau$ has positive dimension. Then there is an open neighborhood $V^{+(-)}$ of $0 \in T^{+,(-)}_{eq} W_\tau$ such that 
\begin{equation}\label{eq: TaylorExp}
\max_{\bar{j} \in \Z_q}
\vert \langle Q^q(u(v)^{\odot d})^{\odot d}, (T_{\bar{j}} u(v))^{\odot d} -u(v)^{\odot d}\rangle  \vert \geq  C \Vert v \Vert^2 
\end{equation} 
holds for all $v \in V^{+(-)}$.
\end{enumerate}
\end{lemma}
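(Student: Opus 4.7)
The plan is to dispatch parts~(1) and~(2) briefly and concentrate on part~(3). For~(1) I would invoke the graph representation built in the proof of Theorem~\ref{thm: Chaperon} (see Thm.~1.4.1 of \cite{Ch02}): near $eq$ the manifold $W_\tau$ arises as the graph of a $\mathcal{C}^\infty$ map $h$ from a neighborhood of $0$ in $\text{T}_{eq}W_\tau$ into its orthogonal complement in $\text{T}_{eq}\Delta^q$, with $h(0)=0$ and $Dh(0)=0$, giving the asserted parametrization and $\|h(v)\|=O(\|v\|^2)$. For~(2), the eigenvalues $\mu_k=d\hat Q(2\pi k/q)/\hat Q(0)$ of Proposition~\ref{pro: Eigenvalues} are all real, so the $\tau$-unstable subspace splits canonically into the eigenspaces for $\mu_k>\tau$ and those for $\mu_k<-\tau$.

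For part~(3), abbreviate $f(v):=(Q^q u(v)^{\odot d})^{\odot d}$ and $g_{\bar{j}}(v):=(T_{\bar{j}}u(v))^{\odot d}-u(v)^{\odot d}$, so the quantity to bound is $\langle f(v),g_{\bar{j}}(v)\rangle$. Expanding $u(v)=eq+v+O(\|v\|^2)$ and using $eq^{\odot m}=q^{-m}\mathbf 1$ for every $m\geq 1$, one gets
\begin{equation*}
f(v)=c\,\mathbf 1+C_1\,Q^q v+O(\|v\|^2),\qquad g_{\bar{j}}(v)=d\,q^{-(d-1)}(T_{\bar{j}}v-v)+O(\|v\|^2),
\end{equation*}
with explicit positive constants $c,C_1$. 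The crucial observation is that $T_{\bar{j}}w-w$ sums to zero for every $w\in\mathbb R^q$, so $\langle\mathbf 1,g_{\bar{j}}(v)\rangle=0$ \emph{exactly} and the leading constant piece of $f$ drops out of the pairing to all orders; hence
\begin{equation*}
\langle f(v),g_{\bar{j}}(v)\rangle=C_2\,\langle Q^q v,\,T_{\bar{j}}v-v\rangle+O(\|v\|^3),\qquad C_2>0.
\end{equation*}

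I would then diagonalize the remaining quadratic form in the real Fourier basis $\{c_k,s_k\}_{k=1}^{\lfloor q/2\rfloor}$ of $\text{T}_{eq}\Delta^q$: each plane $V_k:=\mathrm{span}(c_k,s_k)$ is simultaneously invariant under $Q^q$ (acting by multiplication by $\hat Q(2\pi k/q)$) and under $T_{\bar{j}}$ (acting as planar rotation by angle $-2\pi kj/q$), the different $V_k$ are mutually orthogonal, and decomposing $v=\sum_k v_k$ yields
\begin{equation*}
\langle Q^q v,\,T_{\bar{j}}v-v\rangle=\sum_k\hat Q(2\pi k/q)\,\bigl(\cos(2\pi kj/q)-1\bigr)\,\|v_k\|^2.
\end{equation*}
Specializing to $\bar{j}=1$: on $\text{T}^+_{eq}W_\tau$ every occurring $\hat Q(2\pi k/q)$ is strictly positive while $\cos(2\pi k/q)-1<0$ for every $k\in\{1,\dots,\lfloor q/2\rfloor\}$, so every summand carries the same strict sign and no cancellation can occur; an identical argument with reversed signs of $\hat Q(2\pi k/q)$ but the same signs of $\cos-1$ works on $\text{T}^-_{eq}W_\tau$. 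Consequently $|\langle Q^q v,T_1v-v\rangle|\geq C'\|v\|^2$, and absorbing the cubic remainder on a sufficiently small neighborhood $V^{+(-)}$ of $0$ yields \eqref{eq: TaylorExp}. The main obstacle is producing a single shift $\bar{j}$ that uniformly forbids cancellation between the various Fourier modes of $v$: the choice $\bar{j}=1$ works precisely because $\cos(2\pi k/q)-1<0$ holds simultaneously for \emph{every} relevant $k$, which would fail at a generic shift, and the maximum over $\bar{j}$ on the left of \eqref{eq: TaylorExp} automatically picks up this definiteness.
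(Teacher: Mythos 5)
Your proof is correct, and the overall structure matches the paper's: you parametrize $W_\tau$ as a graph with $Dh(0)=0$, do a second-order Taylor expansion, observe that $\langle\mathbf 1,T_{\bar j}w-w\rangle=0$ kills the zeroth- and first-order terms, and reduce to bounding the quadratic form $\langle Q^q v,T_{\bar j}v-v\rangle$ from below on $T^{\pm}_{eq}W_\tau$. Where you diverge is in the final definiteness estimate. The paper uses the averaging identity $\sum_{\bar j\in\Z_q}\langle Q^q v,T_{\bar j}v-v\rangle=-q\langle Q^q v,v\rangle$ (valid because $\sum_{\bar j}T_{\bar j}v=0$ on $\text{T}_{eq}\Delta^q$) to conclude $\max_{\bar j}|\langle Q^q v,T_{\bar j}v-v\rangle|\geq|\langle Q^q v,v\rangle|$, and then invokes the sign condition on the eigenvalues of $Q^q$ restricted to $T^\pm$. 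You instead single out the concrete shift $\bar j=1$, diagonalize in the real Fourier planes $V_k$ (on which $Q^q$ is scalar and $T_{\bar j}$ a rotation), and observe that $\hat Q(2\pi k/q)(\cos(2\pi k/q)-1)$ has a fixed sign uniformly over the relevant modes $k$. Both routes are valid; the paper's averaging trick is shorter and avoids the Fourier bookkeeping, while your version produces an explicit witness $\bar j=1$ for the maximum and makes the absence of cancellation between modes visible. One minor remark: on $T^+$ the definiteness constant from your argument is $\min_k\hat Q(2\pi k/q)(1-\cos(2\pi k/q))>\tfrac{\tau\hat Q(0)}{d}(1-\cos(2\pi/q))$, which is a slightly weaker constant than the paper's $\tau$, but both are strictly positive and that is all that is needed to absorb the $O(\|v\|^3)$ remainder on a small neighborhood.
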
 
\subsection{Identifiability of the period $q$}
\begin{pro} \label{pro: Id}
Assume that $s$ and $t$ are coprime natural numbers. Then any of the $s$-height-periodic gradient Gibbs measures constructed in the Theorems \ref{thm: existence} and \ref{thm: main} is different from all of the $t$-height-periodic gradient Gibbs measures constructed in these Theorems.	
\end{pro}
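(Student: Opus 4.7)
My plan is to distinguish the two candidate gradient measures already at the level of a single-edge marginal on the edge $(\rho,x_1)$ adjacent to an arbitrarily fixed root. For any $u\in W_\tau\setminus\{eq\}\subset\Delta^q$, Lemma \ref{lem: RecAway} and \eqref{eq: RecIn} give $\lambda^u_{x_1\rho}=u^{\odot d}/\|u^{\odot d}\|_1$ and $\lambda^u_{\rho x_1}=S_q(u)^{\odot d}/\|S_q(u)^{\odot d}\|_1$, and Lemma \ref{lem: BondMarginal} yields
\[
\nu^{\lambda^u}(\eta_{(\rho,x_1)}=\zeta)=Z^{-1}\,Q(\zeta)\,g^u(\zeta),\qquad g^u(\zeta):=\bigl\langle\lambda^u_{\rho x_1},\,T_\zeta\lambda^u_{x_1\rho}\bigr\rangle,
\]
where $g^u$ is $q$-periodic on $\Z$. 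If a constructed $s$-periodic GGM (from some $u_s\in W_\tau\subset\Delta^s$) coincides with a constructed $t$-periodic GGM (from some $u_t\in W_\tau\subset\Delta^t$), then, because $Q>0$ pointwise, the functions $g^{u_s}$ and $g^{u_t}$ must agree up to a common multiplicative constant. Being simultaneously $s$- and $t$-periodic with $\gcd(s,t)=1$, they are forced to be constant in $\zeta$.

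It thus suffices to show that for every $u\in W_\tau\setminus\{eq\}$ in a sufficiently small neighborhood of $eq$ the function $g^u$ at the edge $(\rho,x_1)$ is non-constant. Writing $u=eq+\epsilon v+o(\epsilon)$ with $0\neq v\in T_{eq}W_\tau$ and using $\sum_s v_s=0$, a direct linearization gives $\lambda^u_{x_1\rho}-eq\approx d\epsilon v$ and $\lambda^u_{\rho x_1}-eq\approx d\epsilon\,DS_q[eq]\,v$. Since both centered vectors have zero coordinate sum, the constant components drop from the inner product and
\[
g^u(\zeta)=\frac{1}{q}+d^2\epsilon^2\,\bigl\langle DS_q[eq]\,v,\,T_\zeta v\bigr\rangle+o(\epsilon^2).
\]
By Proposition \ref{pro: Eigenvalues} the eigenvectors of $DS_q[eq]$ in $T_{eq}\Delta^q$ are the real Fourier modes $v_j^c(s)=\cos(2\pi j s/q)$ and $v_j^s(s)=\sin(2\pi j s/q)$ with common eigenvalue $\mu_j=d\hat Q(2\pi j/q)/\hat Q(0)$ for $j\in\{1,\dots,\lfloor q/2\rfloor\}$. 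A short trigonometric computation shows that the bilinear form $(w_1,w_2)\mapsto\langle w_1,T_\zeta w_2\rangle$ decouples across distinct frequencies $j$ and that the sine–cosine cross terms at a common frequency cancel pairwise. Expanding $v=\sum_j(c_j^c v_j^c+c_j^s v_j^s)$ over the eigendirections with $|\mu_j|>\tau$, which by Theorem \ref{thm: Chaperon} is precisely $T_{eq}W_\tau$, one obtains
\[
g^u(\zeta)=\frac{1}{q}+\frac{d^2 q}{2}\epsilon^2\sum_j\mu_j\bigl((c_j^c)^2+(c_j^s)^2\bigr)\cos(2\pi j\zeta/q)+o(\epsilon^2).
\]
Because $v\neq 0$, some $(c_j^c)^2+(c_j^s)^2>0$, and the associated $\mu_j\neq 0$ (as $|\mu_j|>\tau>1$); linear independence of the cosines $\cos(2\pi j\zeta/q)$ for $j\in\{1,\dots,\lfloor q/2\rfloor\}$ together with the constant function then forces $g^u$ to be non-constant for all small $\epsilon>0$, yielding the desired contradiction.

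I expect the main obstacle to be the bilinear-form bookkeeping: the real Fourier modes are not orthogonal under $\langle\cdot,T_\zeta\cdot\rangle$ individually, but within a single frequency $j$ the sine–cosine cross contributions acquire opposite signs and cancel, while different frequencies decouple via elementary product-to-sum identities. One additionally needs to handle the degenerate case $j=q/2$ for even $q$, where only the cosine mode $v_{q/2}^c$ is present; the same formula then applies verbatim with $c_{q/2}^s=0$. Together these steps reduce the analysis to a diagonal sum whose non-vanishing is guaranteed by $v\neq 0$ together with the spectral constraint $|\mu_j|>\tau>1$ defining the pseudo-unstable tangent space $T_{eq}W_\tau$.
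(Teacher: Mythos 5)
Your proof is correct and follows the paper's overall strategy — compare the single-edge marginal at a root-adjacent edge via Lemma \ref{lem: BondMarginal}, use coprimality to force the periodic factor $g^u$ to be constant, then contradict this — but the technique you use to establish non-constancy is genuinely different. The paper delegates that step to Lemma \ref{lem: TaylorExp}, whose proof sums $\langle Q^q(v), T_{\bar j}v - v\rangle$ over all $\bar j \in \Z_q$ to land on the quadratic form $\langle Q^q(v), v\rangle$ and therefore requires $v$ to lie wholly in $T^{+}_{eq}W_\tau$ or $T^{-}_{eq}W_\tau$ so that sign-definiteness is guaranteed; it also first lifts both boundary laws to the common period $q=st$. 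You instead expand $g^u(\zeta)$ directly in the Fourier eigenbasis of $\text{D}S_q[eq]$, observing that the bilinear form $\langle\,\cdot\,,T_\zeta\,\cdot\,\rangle$ decouples across frequencies, that the sine–cosine cross terms within a fixed frequency cancel, and that the $\cos(2\pi j\zeta/q)$-coefficient equals (a positive factor times) $\mu_j\bigl((c_j^c)^2+(c_j^s)^2\bigr)$, which is nonzero whenever $v$ has any component at frequency $j$. This removes the restriction to $T^+$ or $T^-$, so your argument covers every nonzero $v\in T_{eq}W_\tau$ near the origin, and it dispenses with the auxiliary lifting to $\Z_{st}$. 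Two small points worth flagging: for even $q$ at $j=q/2$ the cosine mode has squared norm $q$ rather than $q/2$, so the corresponding coefficient picks up an extra factor of two (harmless for the non-vanishing); and both your proof and the paper's implicitly take the two constructions to be rooted at the same vertex $\rho$ — this is fine, since the argument only needs the $s$-periodic boundary law to have the explicit root-edge form while the $t$-periodic one need only be $t$-periodic, so it extends to differing roots without change.
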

\begin{rk}
In particular, $q$-height-periodic GGMs indexed by distinct primes are distinct. 
\end{rk}
\section{Applications: The SOS-model and a heavy-tailed scenario} \label{sec: Applications}
In this section we will apply the general results on existence of translation non-invariant gradient Gibbs measures of general period $q$ stated in Section \ref{Sec: Existence theory for non-invariant GGMs} to two concrete examples. The first one is the well known SOS-model parametrized by the inverse temperature $\beta>0$. The second one, which we will refer to as inverse square model, is described by a transfer operator fixed to the value $1$ at zero and polynomially decaying of second order with linear dependence on a parameter $a>0$ away from zero.
Smaller values of the parameter $a$ amount to higher suppression of increments.

The inverse square  model serves as a manageable example in which the differential $\text{D}S_q[eq]$ can obtain both positive and negative eigenvalues. It is also the basis of Figure \ref{Fig: Nonhyperbolic} presented in Section \ref{Sec: Existence theory for non-invariant GGMs}.

Both models allow for explicit analysis, see Theorem \ref{thm: EB}.
This works particularly well in the two cases, 
as the Fourier transform of the transfer operator has good monotonicity properties, and 
in particular an expression for its pointwise inverse in 
terms of explicit functions. For any parametrized model with summable transfer 
operator $p\mapsto Q_{p}$ the same can be done in principle, if one provides 
the necessary additional (possibly numerical) 
input for the discussion of the pointwise inverse of its Fourier 
transform $k\mapsto \hat Q_{p}(k)$.
\subsection{The models}
\begin{table}[h]
\large
\begin{tabular}{l||l|l}
Model & SOS& Inverse square \\
\hline 
$Q(j)$ &$\exp(-\beta \vert j \vert)$&$\chi(j=0)+\chi(j \neq 0) \frac{a}{j^2}$ \\
\hline $\hat{Q}(k)$ &$\frac{e^{2 \beta}-1}{e^{2 \beta}-2e^{\beta}\cos k +1 }$& $1+\frac{a}{6}(3k^2-6\pi k+2\pi^2)$ \\
 \hline
$\frac{\hat{Q}(\pi)}{\hat{Q}(0)}$& $\tanh(\frac{\beta}{2})^2$ &$(1-a\frac{\pi^2}{6})/(1+a \frac{\pi^2}{3})$ \\ \hline 
$Q^q(\bar{i})$ & $\frac{\cosh\left(\beta(i-\frac{q}{2})\right)}{\sinh(\beta \frac{q}{2})}$ & $\frac{a}{q^2}\big(\zeta(2,\frac{i}{q})+\zeta(2,-\frac{i}{q})\big)$\\
& & $+1+\chi(\bar{i}\neq \bar{0})(\frac{aq^2}{i^2}-1)$   
\end{tabular}
\caption{The two models parametrized by $\beta$ ($a$, respectively), their Fourier-transforms and their mod-$q$ fuzzy operators. For $\bar i \in \Z_q$, the representative $i \in \Z$ occurring in the last row of the table is taken from $\{0, \ldots, q-1\}$. Here $\zeta(s,w)= \sum_{n=0}^\infty \frac{1}{(n+w)^s}$ denotes the Hurwitz zeta function which is accessible via numerical methods. It is defined for $w,s \in \mathbb{C}$ where $\vert s \vert >1$ and $0<\mathcal{R}(w) \leq 1$.
}
\label{fig: theModels}
\end{table}

Note that although the gradient Gibbs measures constructed in Section \ref{Sec: Existence theory for non-invariant GGMs} are built from $q$-state clock models, the existence criterion presented in Theorem \ref{thm: existence} does not explicitly refer to the fuzzy transfer operators.
Nonetheless the fuzzy transfer operators are included in the synoptic Table \ref{fig: theModels} to give the reader an idea of their appearance and also highlight the benefits of presenting Theorem \ref{thm: existence} in terms of (the Fourier transform of) the original $\mathbb{Z}$-state transfer operator.

The spectra of $\text{D}S_q[eq]$ (up to the factor $d$) for particular choices of the parameters $\beta$ and $a$ are presented in Figure \ref{Fig: Spectra}.
\begin{center} 
\begin{figure}[h]
\centering
\subfloat[SOS-model]
{\includegraphics[width=5cm]{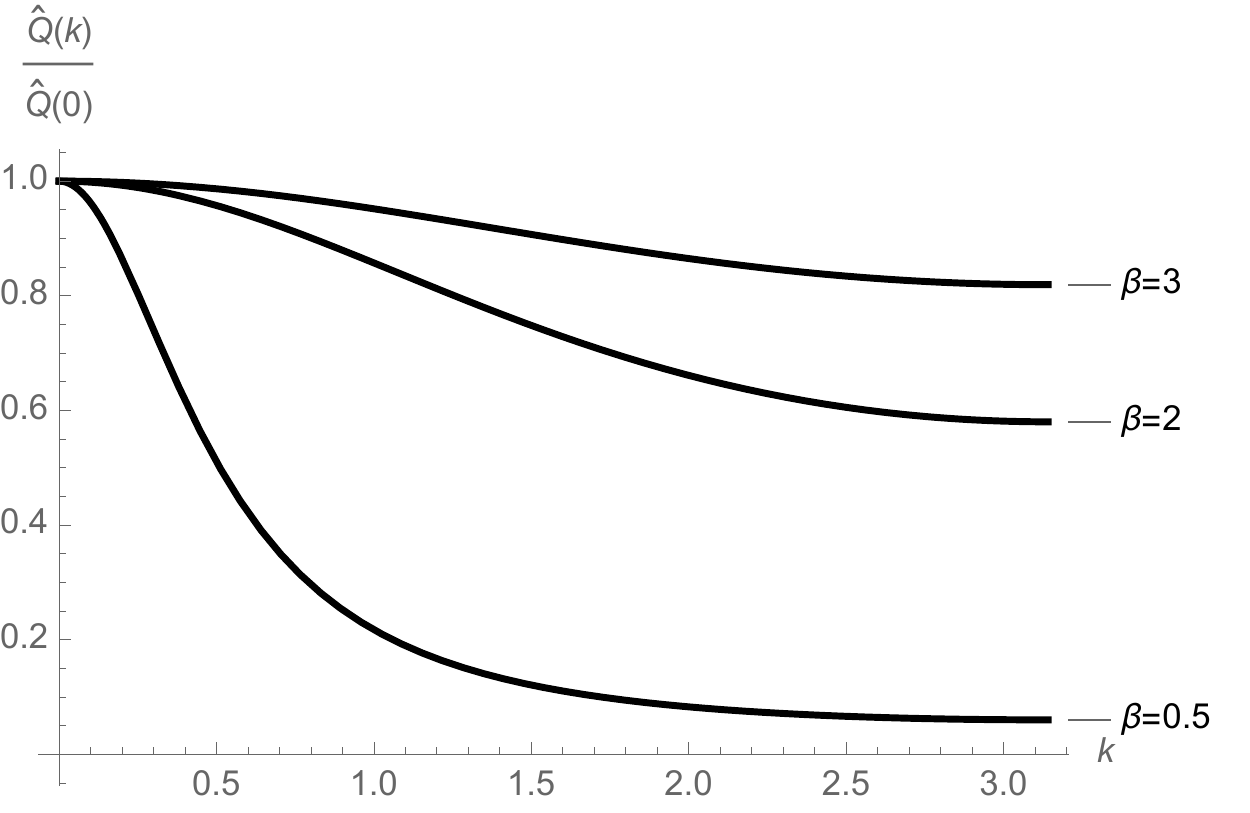}}
\quad
\subfloat[Inverse square-model ]{\includegraphics[width=5cm]{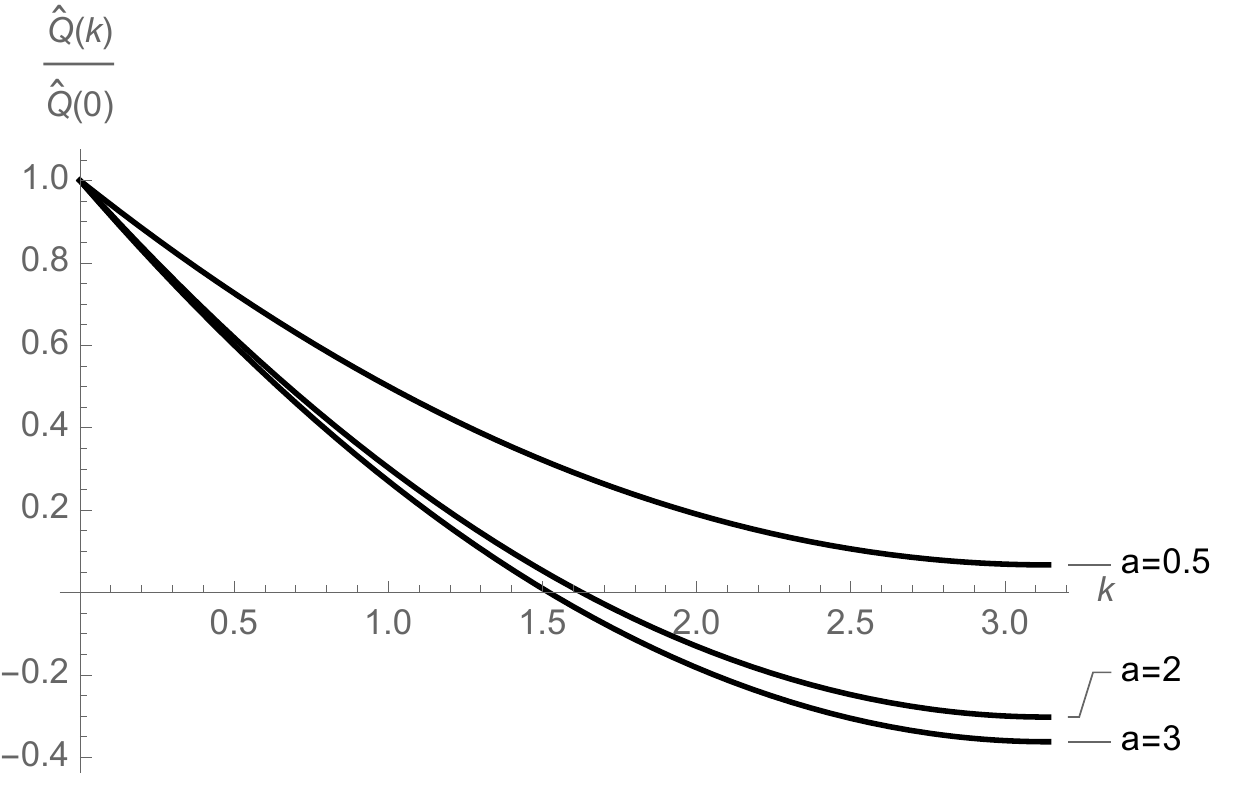}}
\caption{The graphs of the function $\hat{Q}(\cdot)/\hat{Q}(0)$ (see Theorem \ref{thm: existence}) for the two models at different parameter values.}
\label{Fig: Spectra}
\end{figure}
\end{center}
\begin{rk}
	Recall that by the Bochner-Herglotz representation theorem (eg. Thm 15.29 in \cite{Kl14}) the transfer operator $Q$ as a function on the integers is positive semidefinite (which by definition is equivalent to the positive semidefiniteness of all matrices of the form 
	$(Q(i_a-i_b))_{1\leq a,b\leq n}$ for all integers $n$ and all choices of $i_a\in \Z$)  
	if and only if it is the Fourier transform of a positive measure. In particular if $Q(0)< Q(i)$ for some $i\neq 0$ then $Q$ is not positive semidefinite and hence $\hat{Q}$ must obtain negative values in parts of its domain. The inverse square-model provides an example for this phenomenon.	
\end{rk}

\subsection{Explicit bounds}
Both models presented above have a decreasing Fourier transform at any particular choice of parameters $\beta$ and $a$. 
The following theorem gives explicit and optimal
parameter regions where Theorem \ref{thm: existence} is applicable for all $q \geq 2$. Moreover, in the spirit of Theorem \ref{thm: main}, the minimal periods for the existence of gradient Gibbs measures which are not translation invariant are presented as functions on the whole parameter domain.    
\begin{thm}\label{thm: EB}
Consider the Cayley tree of order $d \geq 2$.
\begin{enumerate}
\item  We have the following parameter regions for which  $q$-height periodic gradient Gibbs measures which are not translation invariant exist for \textbf{all} periods $q\geq 2$.
\[\begin{cases}
\beta > \arcosh(\frac{d+1}{d-1}) \quad  &\text{for the SOS-model} \\
a \in (0, \infty) \setminus [\frac{6}{\pi^2}\frac{d-1}{d+2},\frac{9}{\pi^2}\frac{d+1}{d-3}]  \quad  &\text{Inverse square model}.
\end{cases}
\] 
\item Conversely, for $0<\beta\leq \arcosh(\frac{d+1}{d-1})$ and $a \in [\frac{6}{\pi^2}\frac{d-1}{d+2},\frac{9}{\pi^2}\frac{d+1}{d-3}]$, we have the following minimal periods such that for all $q$ greater or equal to them existence of $q$-height periodic gradient Gibbs measures which are not translation invariant is guaranteed.
\[
\begin{cases}
q_{\text{SOS}}(\beta,d)=\lceil \frac{2\pi}{\arccos(d-(d-1)\cosh(\beta))} \rceil  \quad  &\text{for the SOS-model} \\
q_{\text{InvSq}}(a,d)=\lceil \frac{2\pi}{\pi-\sqrt{\frac{\pi^2}{3}(1+\frac{2}{d})-\frac{2}{a}(1-\frac{1}{d})}} \rceil \quad  &\text{Inverse square model} \\
\end{cases}
\]	
Here $\lceil \cdot \rceil$ denotes the smallest integer bounding the nonnegative argument from above.
\item If $a \in (\frac{6}{\pi^2}\frac{d+1}{d-2},\frac{9}{\pi^2}\frac{d+1}{d-3}]$ then also $2$-height-periodic gradient Gibbs measures which are not translation invariant exist for the Inverse square model.
\end{enumerate}
\end{thm}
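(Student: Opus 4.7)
The plan is to invoke Theorem \ref{thm: existence} throughout and reduce the entire statement to case analysis of the explicit Fourier transforms listed in Table \ref{fig: theModels}. By that theorem, existence of a period-$q$ non-t.i.\ GGM is ensured by exhibiting a single index $j\in\{1,\ldots,q-1\}$ with $|\hat Q(2\pi j/q)|/\hat Q(0)>1/d$; the non-degeneracy hypothesis (i) there is taken care of by choosing $\tau>1$ slightly off the finite set of borderline ratios. So I need to study this single scalar inequality in each of the two models.

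For the SOS-model, $\hat Q(k)/\hat Q(0)=(\cosh\beta-1)/(\cosh\beta-\cos k)$ is strictly positive and strictly decreasing on $[0,\pi]$. Hence the maximum over $j$ is attained at $j=1$, and the resulting value $(\cosh\beta-1)/(\cosh\beta-\cos(2\pi/q))$ is strictly increasing in $q$. The binding period is therefore always $q=2$: the condition there is $(\cosh\beta-1)/(\cosh\beta+1)>1/d$, equivalent to $\cosh\beta>(d+1)/(d-1)$, which proves the SOS part of (1). Solving the same inequality at general $q$ for the smallest admissible integer produces $q_{\text{SOS}}(\beta,d)=\lceil 2\pi/\arccos(d-(d-1)\cosh\beta)\rceil$ in (2).

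For the inverse square model I would set $A:=a\pi^2/3$ and $c(x):=6x^2-6x+1$, giving the compact form $\hat Q(2\pi j/q)/\hat Q(0)=(1+Ac(j/q))/(1+A)$, with $c(x)\in[-1/2,1]$ attained at $c(0)=c(1)=1$, $c(1/3)=-1/3$ and $c(1/2)=-1/2$. A routine case distinction on the sign of $1+Ac$ yields a closed form for the $A$-interval on which $|(1+Ac)/(1+A)|\le 1/d$; at $q=2$ (only value $c=-1/2$) this failure interval is $[2(d-1)/(d+2),2(d+1)/(d-2)]$ and at $q=3$ (only value $c=-1/3$) it is $[3(d-1)/(d+3),3(d+1)/(d-3)]$. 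After rescaling via $a=3A/\pi^2$, the lower endpoint of the first and the upper endpoint of the second are exactly the endpoints of the failure interval stated in (1). Part (3) is then the observation that the high-$A$ branch of the $q=2$ success condition reads $A>2(d+1)/(d-2)$, i.e.\ $a>6(d+1)/(\pi^2(d-2))$. Part (2) for the inverse square model follows from the $j=1$ branch: completing the square in $(k-\pi)^2$ in the inequality $\hat Q(k)>\hat Q(0)/d$ gives $(k-\pi)^2>\frac{\pi^2}{3}(1+\frac{2}{d})-\frac{2}{a}(1-\frac{1}{d})$, and substituting $k=2\pi/q$ and solving for the smallest integer $q$ produces the formula $q_{\text{InvSq}}(a,d)$.

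The main obstacle is completing (1) by showing that for $A$ outside both the $q=2$ and $q=3$ failure intervals, every period $q\ge 4$ also admits some working $j$, so that the announced condition really guarantees existence for all $q\ge 2$. For $A<2(d-1)/(d+2)$ the choice $j=1$ gives $c(1/q)\ge -1/2$, so the success threshold $A<(d-1)/(1-dc(1/q))$ is at least $2(d-1)/(d+2)$, and the condition transfers. For $A>3(d+1)/(d-3)$ I would use $j=q/2$ when $q$ is even (attaining $c=-1/2$ exactly and reducing to the $q=2$ condition), and $j=(q\pm 1)/2$ when $q$ is odd, where $c=-1/2+3/(2q^2)$ and the elementary bound $3(d+1)/(d-3)\ge 2(d+1)/((d-2)-3d/q^2)$ for $q\ge 3$ closes the argument. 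This rational comparison is where all the remaining bookkeeping sits; once it is checked, all three parts of the theorem follow at once from the above scheme.
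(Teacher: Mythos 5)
Your proof is correct and follows the paper's approach: reduce Theorem~\ref{thm: EB} to the spectral criterion of Theorem~\ref{thm: existence} and analyze the explicit Fourier transforms from Table~\ref{fig: theModels} via monotonicity in $k$ and solving the resulting scalar inequalities for $\beta$, $a$, and $q$. Your treatment of the large-$a$ inverse square regime is in fact more explicit than the paper's, which closes that case with a brief ``by monotonicity'' appeal after checking $q=2,3$, whereas you pick a concrete index $j=q/2$ (even $q$) or $j=(q\pm1)/2$ (odd $q$) and verify $3(d+1)/(d-3)\ge 2(d+1)/(d-2-3d/q^2)$ directly; this makes the ``for all $q\ge2$'' claim in the high-$a$ branch watertight.
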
 

\begin{rk}
In the situation of the first statement of Theorem \ref{thm: EB} with $\beta > \arcosh(\frac{d+1}{d-1})$ and $a<\frac{6}{\pi^2}\frac{d-1}{d+2}$ at any $q \geq 2$ all eigenvalues of $\text{D}S_q[eq]$ are strictly greater than $1$ meaning that the unstable manifold $W_\tau(S_q) \subset \Delta^q$ has the full dimension $q-1$. Hence, the set of initial values in the construction of the gradient Gibbs measures lacking translation invariance has maximal degrees of freedom. 

By contrast, in the complement of these parameter regions, for large $q$ some eigenvalues will fail to be greater than one in modulus. Hence, the unstable manifold does not possess full dimensionality in that case.  
\end{rk}
\section{Proofs}\label{sec: Proofs}
\subsection{Proofs for Section \ref{sec: constr_GGM}}
\begin{proof}[Proof of Lemma \ref{lem: oldRepOfGGM}] \label{pr: lem1}
Let $\Lambda \subset V$ be any finite connected volume and $w \in \Lambda$ any fixed site. Then for any $\zeta_{\Lambda \cup \partial \Lambda} \in \mathbb{Z}^{\Lambda \cup \partial \Lambda}$ by the   equations \eqref{eq: ConstructionOfGGM} and \eqref{eq: RWPEtr} we have
\begin{equation} \label{eq: PrGGMoldRep1st}
\begin{split}
&\nu^{\lambda^q}(\eta_{\Lambda \cup \partial \Lambda}=\zeta_{\Lambda \cup \partial \Lambda})\cr 
&=T^q_Q(\mu^{\lambda^q})(\eta_{\Lambda \cup \partial \Lambda}=\zeta_{\Lambda \cup \partial \Lambda}) \cr &=\sum_{\bar{\omega}_{\Lambda \cup \partial \Lambda} \in \Z_q^{\Lambda \cup \partial \Lambda}}\mu^{\lambda^q}(\bar{\sigma}_{\Lambda \cup \partial \Lambda}=\bar{\omega}_{\Lambda \cup \partial \Lambda})\prod_{(x,y) \in {}^w\vec{L}, \, x,y \in \Lambda \cup \partial \Lambda} \rho^q_{Q_{\{x,y\}}}(\zeta_{(x,y)} \mid \bar{\omega}_y-\bar{\omega}_x) \cr
&=\sum_{\bar{\omega}_{\Lambda \cup \partial \Lambda} \in \Z_q^{\Lambda \cup \partial \Lambda}}\mu^{\lambda^q}(\bar{\sigma}_{\Lambda \cup \partial \Lambda}=\bar{\omega}_{\Lambda \cup \partial \Lambda})\prod_{(x,y) \in {}^w\vec{L}, \, x,y \in \Lambda \cup \partial \Lambda} \chi(\bar{\omega}_y-\bar{\omega}_x=\bar{\zeta}_{(x,y)})\frac{Q_{\{x,y\}}(\zeta_{(x,y)})}{Q^q_{\{x,y\}}(\bar{\zeta}_{(x,y)})}
\end{split}
\end{equation}	

As $\Lambda$ is connected, given the gradient configuration $\zeta_{\Lambda \cup \partial \Lambda} \in \mathbb{Z}^{\Lambda \cup \partial \Lambda}$ there is a one-to-one relation between the height $\bar{\omega}_w$ at site $w$ and the set of height configurations $\bar{\omega}_{\Lambda \cup \partial \Lambda} \in \Z_q^{\Lambda \cup \partial \Lambda}$ for which all indicators in \eqref{eq: PrGGMoldRep1st} are non-vanishing. 
Hence we can write the last expression in  \eqref{eq: PrGGMoldRep1st} as
\begin{equation}\label{eq: PrGGMoldRep2nd}
\begin{split}
&=\sum_{\bar{s} \in \Z_q}\mu^{\lambda^q}(\bar{\sigma}_w=\bar{s}, \bar{(\nabla \sigma)}_{\Lambda \cup \partial \Lambda}=\bar{\zeta}_{\Lambda \cup \partial \Lambda})\prod_{\{x,y\} \in L, \, x,y \in \Lambda \cup \partial \Lambda} \frac{Q_{\{x,y\}}(\zeta_{(x,y)})}{Q^q_{\{x,y\}}(\bar{\zeta}_{(x,y)})}.
\end{split}
\end{equation}	

Now, by Theorem \ref{thm: Zachary} applied to the finite-state-space measure $\mu^{\lambda^q}$, at any $\bar{s} \in \mathbb{Z}_q$ we have
\begin{equation}
\begin{split}
&\mu^{\lambda^q}(\bar{\sigma}_w=\bar{s}, \bar{(\nabla \sigma)}_{\Lambda \cup \partial \Lambda}=\bar{\zeta}_{\Lambda \cup \partial \Lambda})\cr &= Z_\Lambda^{-1}\prod_{y \in \partial \Lambda}\lambda^q_{yy_\Lambda}(\bar{s} +\sum_{b \in \Gamma(w,y_\Lambda)}\bar{\zeta}_b) \prod_{b \cap  \Lambda \neq \emptyset}Q^q_b(\bar{\zeta}_b),
\end{split}
\end{equation}
where $Z_\Lambda$ is a normalization constant.
Inserting this into \eqref{eq: PrGGMoldRep2nd} finishes the proof of Lemma \ref{lem: oldRepOfGGM}. 
\end{proof}
\begin{proof}[Proof of Lemma \ref{lem: BondMarginal}] \label{pr: lem2}
Apply Lemma \ref{lem: oldRepOfGGM} to $\Lambda=\{x\}$. Then summing over $\zeta_{(x,z)}$ for $z \in \partial \{x\} \setminus \{y\}$ gives
\begin{equation}
\begin{split}
\nu^{\lambda^q}(\eta_{(x,y)}=\zeta_{(x,y)})
&=Z_\Lambda^{-1}\sum_{\bar{s} \in \Z_q}\lambda^q_{yx}(\bar{s}+\bar{\zeta}_{(x,y)})Q_{\{x,y\}}(\zeta_{(x,y)}) \cr 
&\qquad \times  \left(\prod_{z \in \partial \{x\} \setminus \{y\}} \sum_{\zeta_{(x,z)}}\lambda^q_{zx}(\bar{s}+\bar{\zeta}_{(x,z)})Q_{\{x,z\}}(\zeta_{(x,z)}) \right) 
\end{split}
\end{equation}	
By the boundary law equation \eqref{eq: bl} the last expression in parentheses equals $\lambda^q_{xy}(\bar{s})$ up to a positive constant. This finishes the proof of Lemma \ref{lem: BondMarginal}. 
 
\end{proof}	
\begin{proof}[Proof of Theorem \ref{thm: DLR-eq}]\label{pr: thm2} \
By linearity of the DLR-equation \eqref{grad} and the map $T^q_Q$, and by extremal decomposition of Gibbs measures (e.g. Thm. 7.26 in \cite{Ge11}) it suffices to prove the statement for extremal Gibbs measures of $\gamma^q$. All of these are tree-indexed Markov chains (see Theorem 12.6 in \cite{Ge11}). Hence by Theorem \ref{thm: Zachary} it suffices to show that all measures of the form $\nu^{\lambda^q}$ as defined in \eqref{Not: BL_GGM} are invariant under the kernels \ref{Def: Gibbs specification}.
	
This means that we have to check $\nu^{\lambda^{q}}(A \mid \mathcal{T}^\nabla_\Lambda)=\gamma^\prime_\Lambda(A \mid \cdot)$ $\nu^{\lambda^{q}}$-a.s. for all $A \in \mathcal{F}^\nabla$ and all finite subtrees $\Lambda \subset V$. 
Let $\Lambda \subset \Lambda \cup \partial \Lambda \subset \Delta$ be any finite subtrees and $\omega, \zeta \in \mathcal{F}^\nabla$ any gradient configurations with $\zeta_{V \setminus \Lambda} =\omega_{V \setminus \Lambda}$. To ease notation, in what follows we will omit the projection mappings and simply write $\nu^{\lambda^{q}}(\zeta_{\Lambda \cup \partial \Lambda})$ instead of $\nu^{\lambda^{q}}(\eta_{\Lambda \cup \partial \Lambda}=\zeta_{\Lambda \cup \partial \Lambda})$.
	
Then we have
\begin{equation}\label{eq: before_pinning}
\begin{split}
\frac{\nu^{\lambda^{q}}(\zeta_{\Lambda \cup \partial \Lambda} \mid [\omega]_{\partial \Lambda}, \omega_{(\Delta \cup \partial \Delta) \setminus \Lambda} )}{\nu^{\lambda^{q}}(\omega_{\Lambda \cup \partial \Lambda} \mid [\omega]_{\partial \Lambda}, \omega_{(\Delta \cup \partial \Delta) \setminus \Lambda} )}&= \frac{1_{[\zeta]_{\partial \Lambda}=[\omega]_{\partial \Lambda}}\nu^{\lambda^{q}}(\zeta_{\Lambda \cup \partial \Lambda}, \omega_{(\Delta \cup \partial \Delta) \setminus \Lambda} )}{\nu^{\lambda^{q}}(\omega_{\Lambda \cup \partial \Lambda}, \omega_{(\Delta \cup \partial \Delta) \setminus \Lambda})},
\end{split}
\end{equation}
where we used that $[\zeta]_{\partial \Lambda}$ is $\mathcal{F}^\nabla_{\Lambda \cup \partial \Lambda}$ measurable.
By Lemma \ref{lem: oldRepOfGGM} it follows
\begin{equation*}
\begin{split}
&\nu^{\lambda^{q}}(\zeta_{\Lambda \cup \partial \Lambda}, \omega_{(\Delta \cup \partial \Delta) \setminus \Lambda} )   
= \left( \sum_{\bar{s} \in \mathbb{Z}_q} \prod_{y \in \partial \Delta}\lambda^q_{y y_\Delta}(\bar{s} + \sum_{b \in \Gamma(v,y) }\zeta_b) \right)\prod_{b \cap \Delta \neq \emptyset}  Q_b(\zeta_b).
\end{split}
\end{equation*}
By the assumption $\zeta_{V \setminus \Lambda} =\omega_{V \setminus \Lambda}$ all factors in \eqref{eq: before_pinning} depending only on vertices in $\Lambda^c$ cancel: 
\begin{equation*}
\begin{split}
&\frac{1_{[\zeta]_{\partial \Lambda}= 
[\omega]_{\partial \Lambda}} \nu^{\lambda^{q}}(\zeta_{\Lambda \cup \partial \Lambda} \mid [\omega]_{\partial \Lambda}, \omega_{(\Delta \cup \partial \Delta) \setminus \Lambda} )}{\nu^{\lambda^{q}}(\omega_{\Lambda \cup \partial \Lambda} \mid [\omega]_{\partial \Lambda}, \omega_{(\Delta \cup \partial \Delta) \setminus \Lambda} )} \cr  	
&=\frac{1_{[\zeta]_{\partial \Lambda}= 
[\omega]_{\partial \Lambda}} \prod_{b \cap \Lambda \neq \emptyset} Q_b(\zeta_b) \sum_{\bar{s} \in \mathbb{Z}_q} \prod_{y \in \partial \Delta}\lambda^q_{y y_\Delta}(\bar{s} + \sum_{b \in \Gamma(v,y) }\zeta_b)}{\prod_{b \cap \Lambda \neq \emptyset} Q_b(\omega_b) \sum_{\bar{s} \in \mathbb{Z}_q} \prod_{y \in \partial \Delta}\lambda^q_{y y_\Delta}(\bar{s} + \sum_{b \in \Gamma(v,y) }\omega_b)}.
\end{split}
\end{equation*}
Now note that 
\begin{equation*}
\begin{split}
1_{[\zeta]_{\partial \Lambda}= 
[\omega]_{\partial \Lambda}} \sum_{\bar{s} \in \mathbb{Z}_q} \prod_{y \in \partial \Delta}\lambda^q_{y y_\Delta}(\bar{s} + \sum_{b \in \Gamma(v,y) }\zeta_b)=1_{[\zeta]_{\partial \Lambda}= 
[\omega]_{\partial \Lambda}} \sum_{\bar{s} \in \mathbb{Z}_q} \prod_{y \in \partial \Delta}\lambda^q_{y y_\Delta}(\bar{s} + \sum_{b \in \Gamma(v,y) }\omega_b)
\end{split},
\end{equation*}
as the outer summation is done over the same terms with shifted indices if the relative heights at $\partial \Lambda$ coincide.
This finally gives
\begin{equation*}
\begin{split}
\frac{\nu^{\lambda^{q}}(\zeta_{\Lambda \cup \partial \Lambda} \mid [\omega]_{\partial \Lambda}, \omega_{(\Delta \cup \partial \Delta) \setminus \Lambda} )}{\nu^{\lambda^{q}}(\omega_{\Lambda \cup \partial \Lambda} \mid [\omega]_{\partial \Lambda}, \omega_{(\Delta \cup \partial \Delta) \setminus \Lambda} )} 
&=\frac{1_{[\zeta]_{\partial \Lambda}= 
[\omega]_{\partial \Lambda}}  \prod_{b \cap \Lambda \neq \emptyset} Q_b(\zeta_b)}{ \prod_{b \cap \Lambda \neq \emptyset} Q_b(\omega_b)} \cr 
&=\frac{\gamma'_{\Lambda \cup \partial \Lambda}(\zeta_{\Lambda \cup \partial \Lambda} \mid \omega) }{\gamma'_{\Lambda  \cup \partial \Lambda}	(\omega_{\Lambda \cup \partial \Lambda} \mid \omega)}.
\end{split}
\end{equation*}
Summing over all possible $\zeta_{\Lambda \cup \partial \Lambda} \in \mathcal{F}^\nabla_{\Lambda \cup \partial \Lambda}$ and taking the limit $\Delta \rightarrow V$ (the specification $\gamma'$ is quasilocal) concludes the proof. 
\end{proof}
\begin{proof}[Proof of Proposition \ref{pro: deloc}]\label{pr: pro1}
Consider a path $(b_1, b_2, \ldots, b_n)$ of length $n$ and let $\eta_b:=\sigma_x-\sigma_y$ denote the gradient spin variable along the edge $b=(x,y) \in \vec{L}$ and set $W_n:= \sum_{i=1}^n\eta_{b_i}$. 
Finally let $\bar{\sigma}=(\bar{\sigma}_{x_i})_{i=1, \ldots, n+1}$ be the fuzzy chain on $\Z_q$ along this path.
	
For any fixed $k \in \mathbb{Z}$ we have
\begin{equation*}
\nu^{\lambda^q}(W_n=k) \, = \, \int \mu^{\lambda^q}(\text{d}\bar{\sigma}) \, \nu^{\lambda^q}(W_n=k \mid \bar{\sigma}).
\end{equation*}  
In the rest of the proof we will show that $\nu^{\lambda^q}(W_n=k \mid \bar{\sigma}) \stackrel{n \rightarrow \infty}{\rightarrow} 0$ uniformly in $\bar{\sigma}$. To start with, first recall that $\nu^{\lambda^q}(W_n=k \mid \bar{\sigma})$ is just the product measure of the measures $\rho_q^Q$ describing the marginals along the edges of the path conditioned on the increment of the fuzzy chain along the respective edge. Given the increment of the fuzzy chain along an edge, the measure $\rho_q^Q$ does not have any spatial dependence.
Hence the remainder of the proof is a direct generalization of the proof of Theorem 4 in \cite{HK21} which states delocalization of $\nu^{\lambda^q}$ in the special case of a spatially homogeneous boundary law $\lambda^q$.
\end{proof}
\subsection{Proofs for Section \ref{Sec: Existence theory for non-invariant GGMs}}
\begin{proof}[Proof of Proposition \ref{pro: Eigenvalues} ] \label{pr: pro2}
The proof consists of first proving that \begin{equation} \label{eq: DSandQq}
\text{D}S_q[eq]=\frac{d}{\Vert Q \Vert_1}Q^q
\end{equation} and afterwards expressing the spectrum of $Q^q$ by the Fourier-transform of $Q$.
	
For any $v \in \Delta^q$ we have $S_q(v)=\frac{Q^qv^{\odot d}}{\Vert Q^q \Vert_1 \Vert v^{\odot d}\Vert_1}=\frac{1}{\Vert Q^q \Vert_1}Q^q(G_d(v))$. Direct calculation gives the following
\begin{lemma}
For any $s \neq 0$ the map $G_s: \Delta^q \rightarrow \Delta^q \, ; \, z \mapsto \frac{z^{\odot s}}{\Vert z^{\odot s} \Vert_1}$ is a $\mathcal{C}^\infty$-diffeomorphism with inverse $G_{\frac{1}{s}}$. The differential at $z \in \Delta^q$ is given by
\begin{equation*}
{\text{D}  G_s}[z](v)=\frac{s}{\Vert z^{\odot s}\Vert_1}\left(z^{\odot s-1} \odot v-\frac{z^{\odot s} \langle  z^{\odot s-1},v \rangle}{\Vert z^{\odot s}\Vert_1}\right)
\end{equation*}
for any $v \in T_z\Delta^q$.
\end{lemma}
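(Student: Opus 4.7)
My plan is to interpret $\Delta^q$ as the open unit simplex (strictly positive entries summing to one), which is the natural domain here: only on this open set is $z \mapsto z^{\odot s}$ smooth for arbitrary real $s$, and this is also the regime relevant to boundary laws normalized by the $1$-norm. Once this is clear the lemma breaks into three routine steps, which I would carry out in the following order.

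First, I would verify that $G_s$ maps the open simplex into itself and is $\mathcal{C}^\infty$ there. Strict positivity of the coordinates gives smoothness of each coordinate $z_i\mapsto z_i^s$, and the denominator $\|z^{\odot s}\|_1 = \sum_j z_j^s$ is a smooth, strictly positive function on the open simplex; componentwise division preserves smoothness. Positivity and the normalization $\sum_i (G_s(z))_i = 1$ are immediate from the definition, so $G_s(\Delta^q)\subset \Delta^q$.

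Second, I would check that $G_{1/s}$ is a two-sided inverse. Setting $w = G_s(z)$, a direct calculation gives $w_i^{1/s} = z_i/\|z^{\odot s}\|_1^{1/s}$, and since $\sum_i z_i = 1$ we get $\|w^{\odot 1/s}\|_1 = 1/\|z^{\odot s}\|_1^{1/s}$, so $(G_{1/s}\circ G_s)(z)_i = z_i$. The same computation with $s$ and $1/s$ interchanged gives the other composition, establishing the diffeomorphism property.

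Third, I would compute the differential by applying the quotient rule to $G_s(z) = f(z)/g(z)$ with $f(z) = z^{\odot s}$ and $g(z) = \|z^{\odot s}\|_1$. Componentwise differentiation yields $\mathrm{D}f[z](v) = s\,z^{\odot s-1}\odot v$ and $\mathrm{D}g[z](v) = s\,\langle z^{\odot s-1}, v\rangle$, and substituting into
\begin{equation*}
\mathrm{D}G_s[z](v) = \frac{\mathrm{D}f[z](v)}{g(z)} - \frac{f(z)\,\mathrm{D}g[z](v)}{g(z)^2}
\end{equation*}
gives exactly the claimed formula. There is no real obstacle here; the only subtlety worth mentioning is that the second term in the formula guarantees $\mathrm{D}G_s[z](v)$ has coordinate sum zero, so it lies in $T_{G_s(z)}\Delta^q$ as required, which is a useful consistency check rather than an additional argument to make.
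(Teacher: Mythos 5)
Your proposal is correct and is exactly what the paper means by ``direct calculation'': the paper gives no further detail, and the quotient-rule computation you carry out, together with the composition check $G_{1/s}\circ G_s=\mathrm{id}$, is the natural way to verify the claim. The observation that the formula automatically lands in the tangent space is a good sanity check but, as you say, not an additional logical step.
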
	
Hence, applying the chain rule we arrive at
\begin{equation*}
\begin{split}
\text{D}S_q[eq](v) &=\frac{1}{\Vert Q^q \Vert_1}Q^q(\text{D}G_d[eq](v)) \cr&=\frac{1}{\Vert Q^q \Vert_1}dq^{d-1}Q^q(eq^{\odot d-1} \odot v)) \cr 
&=\frac{d}{\Vert Q^q \Vert_1}Q^q(v) \cr 
&=\frac{d}{\Vert Q \Vert_1}Q^q(v) 
\end{split}
\end{equation*}
where the second term of $\text{D}G_d[eq](v)$ vanishes as $\langle eq^{\odot{d-1}},v\rangle=0$.

The eigenvalues of the $q$-circulant matrix $Q^q$
are given by Fourier transform
\begin{equation}
\begin{split}
\lambda_j(Q^q)= \sum_{r=0}^{q-1}Q^q(r)e^{i \frac{2 \pi j r}{q}}
\end{split}
\end{equation}
where $j=0,1,\dots, q-1$. 
The corresponding
eigenvectors are $\frac{1}{\sqrt{q}}(1,\omega_j,\dots,\omega_j^{q-1})$
with $\omega_j=e^{i \frac{2 \pi j}{q}}$. 

By symmetry of $Q^q$ we are reduced to the real cosine-Fourier modes and only $1+\lfloor \frac{q}{2} \rfloor$ distinct eigenvalues for which we can choose an orthogonal basis of eigenvectors in $\mathbb{R}^q$. These are given by suitable linear combinations of the real and imaginary parts of the complex eigenvectors stated above. 
Inserting the definition of the fuzzy operator function in terms of sums 
of elements of equivalence classes we obtain
\begin{equation}
\begin{split}
\lambda_j(Q^q)&= Q^q(0)+\sum_{r=1}^{q-1}Q^q(r)\cos\left(\frac{2 \pi j r}{q} \right)\cr
&=\sum_{n\in \Z}Q(n)\cos\bigl(2 \pi n \frac{j}{q}\bigr)
\end{split}
\end{equation}
where $j=0,1,\dots,  \lfloor \frac{q}{2} \rfloor$. 

Hence, we see that the eigenvalues of the $q$-fuzzy operator 
are given by the Fourier transform of the original transfer operator 
\begin{equation} \label{eq: Fourier}[-\pi,\pi)\ni k \mapsto \hat Q(k)=\sum_{n\in \Z}Q(n)\cos(n k ) \end{equation}
sampled at the finitely many values $2 \pi\frac{j}{q}, \, j=0,1,\dots,  \lfloor \frac{q}{2} \rfloor$. 

In the last step we insert $\Vert Q \Vert_1=\hat{Q}(0)$ into \eqref{eq: DSandQq} finishing the proof of Proposition \ref{pro: Eigenvalues}.
\end{proof}
\begin{proof}[Proof of Proposition \ref{pro: smallq}]
To prove part i) note that
	Proposition 8.8 in \cite{Ge11} derived from Dobrushin uniqueness theory, and 
	specialized to our situation of pair interactions of a regular tree 
	ensures uniqueness  under 
	the condition $(d+1)\delta (\log Q^q)<2$, where 
	$\delta (\log Q^q) :=\max_{0\leq a,b\leq q-1}(\log Q^q(a)-\log Q^q(b))$ is the variation 
	of the logarithm of the fuzzy transfer operator. 
	
	To relate this condition back to our original interaction potential $U$ let us write 
	\begin{equation}\begin{split}
	&\delta (\log Q^q)=\max_{0\leq a,b\leq q-1}\Bigl(
	\log \sum_{j\in \Z}\exp\bigl(-\beta U(|a+ q j|)\bigr)- \log \sum_{j\in \Z}\exp\bigl(-\beta U(|b+ q j|)\bigr)
	\Bigr) \cr
	&\leq
	\max_{0\leq a,b\leq q-1}\log \frac{\sum_{j\in \Z}\exp\bigl(-\beta (U(|a+ q j|)-U(|b+ q j|)\bigr)   \exp\bigl(-\beta U(|b+ q j|\bigr) }
	{\sum_{j\in \Z}\exp\bigl(-\beta U(|b+ q j|)\bigr)}
	\cr
	&\leq
	\beta \delta_q(U),
	\end{split}
	\end{equation}
	where the last estimate follows as we have for all terms appearing in 
	the exponential under the expectation the uniform bound 
	$| U(|a+ q j|)-U(|b+ q j|) | \leq \delta_q(U)$. 
To prove part ii) we assume that $U(x)\geq -M$ with $M\geq 0$ finite.
We will use an integral approximation to show that $\delta(\log Q^q) \stackrel{\beta \downarrow 0}{\rightarrow}0$, i.e.,	
\begin{equation}\begin{split}
&\lim_{\beta\downarrow 0}\max_{0\leq a\leq q-1}\Bigl |
\log \sum_{j\in \Z}\exp\bigl(-\beta U(|a+ q j|)\bigr)- \log \sum_{j\in \Z}\exp\bigl(-\beta 
U(|q j|)\bigr)
\Bigr |=0. \cr
\end{split}
\end{equation}
It suffices to assume $K=1$, by rescaling of $\beta$. 	
Put $\gamma(\beta)=\beta^\frac{1}{\alpha}$ and write the term under the modulus in the last display 
as 
\begin{equation}
\begin{split}
&\log \frac{
	\sum_{j\in \Z}\exp\bigl(-\gamma^\alpha U(|a+ q j|)\bigr)\gamma}
{\sum_{j\in \Z}\exp\bigl(-\gamma^\alpha U(|a+ q j|)\bigr)\gamma
}=\log\frac{\int_{\mathbb{R}}g_{a,\gamma} d\lambda}{\int_{\mathbb{R}}g_{0,\gamma} d\lambda}
\end{split}
\end{equation}
with  the function	
\[g_{a,\gamma}(x)=\sum_{j\in \Z}\exp\Bigl(-\gamma^\alpha U(|a+ q j| )\Bigr)1_{[
	\gamma j,\gamma (j+1))}(x).\]
We would like to perform the limit $\gamma\downarrow 0$ and conclude 
that numerator and denominator under the logarithm converge 
towards the same finite limit. 	
In the first step we show that for Lebesgue-a.e. $x$ the pointwise limit holds
\[\lim_{\gamma\downarrow 0}
g_{a,\gamma}(x)=e^{-q^\alpha |x|^\alpha},\] where it is important 
that the limiting function is independent of $a$. 
To do so, fix $x\neq 0$ and let $j(x,\gamma)\in \Z$ be given by 
$x\in \gamma [j(x,\gamma), j(x,\gamma)+1)$. 
Then $\lim_{\gamma \downarrow 0}\gamma j(x,\gamma)=x$ and we conclude, for fixed $x\neq 0$, the desired pointwise limit
\begin{equation}\label{eq: Fixq}
\begin{split}
&\gamma^\alpha U(|a+ q j(x,\gamma)| )\cr
&	=\Bigr(\gamma |a+ q j(x,\gamma)| \Bigl)^\alpha\,\,\frac{U(|a+ q j(x,\gamma)| )}{ |a+ q j(x,\gamma)|^\alpha}\rightarrow q^\alpha |x|^\alpha \times 1 
\end{split}
\end{equation}	
with $\gamma \downarrow 0$, by our main assumption on the 
growth of $U$. 

Next, convergence of the integrals under the logarithm follows and the proof is finished, 
once we can construct 
a Lebesgue-integrable dominating function for small enough $\gamma$.
As $a \leq q$ the expression $a+ q j(x,\gamma)$ is non-negative if and only if $j(x,\gamma) \geq 0$. Hence an inspection of the first factor of \eqref{eq: Fixq} gives the lower bound
\[\gamma \vert a+qj(x,\gamma) \vert \geq q\vert x \vert +\gamma \min(a-q,-a) \geq q (\vert x \vert-\gamma).  \]
Combining this lower bound with the assumption on the growth rate of $U$, we conclude
that there are $R\in(0,\infty)$ and $\gamma_0>0$ such that for all $\gamma<\gamma_0$ and all $|x|\geq R$
we have $g_{a,\gamma}(x)\leq e^{-\frac{q^\alpha |x|^\alpha}{2}}$. 
For $\vert x \vert <R$ we take the lower bound $M$ on $U$ into account.
Hence, the function $e^{-\frac{q^\alpha |x|^\alpha}{2}}1_{|x|\geq R}+ e^{\gamma^\alpha_0 M}1_{|x|\leq R}$ 
can be used as Lebesgue-integrable dominating function for small enough $\gamma<\gamma_0$ 
and this finishes the proof. 
\end{proof}	
\begin{proof}[Proof of Lemma \ref{lem: RecAway}]\label{pr: lem4}
Consider an infinite path $\{\rho=x_0,x_1\},\{x_1,x_2\} \dots$.

For any $n \in \mathbb{N}$ let $y_1, \ldots y_d \in \partial \{x_n\} \setminus \{x_{n+1}\}$ such that $y_d=x_{n-1}$. Then the set of oriented edges of the form $(y_i,x_n)$, $i=1, \ldots d$ splits up into the edge $(x_{n-1}, x_n)$ pointing away from the root and the $d-1$ edges $(y_i,x_n)$, $i=1, \ldots d-1$ pointing towards the root where $\text{d}(y_i,\rho)=\text{d}(x_n,\rho)+1=n+1$. From radial symmetry of the construction and the boundary law equation \eqref{eq: perGenBL} it thus follows that
\begin{equation*} 
\begin{split}
\lambda^{u}_{x_nx_{n+1}}&=\frac{Q^q(\lambda_{x_{n-1}x_n}) \odot (Q^q(\lambda_{y_1x_n}))^{\odot d-1}}{\Vert Q^q(\lambda_{x_{n-1}x_n}) \odot (Q^q(\lambda_{y_1x_n}))^{ \odot d-1} \Vert_1} \cr 
&=\frac{Q^q(\lambda_{x_{n-1}x_n})\odot S_q(G_{\frac{1}{d}}(\lambda_{y_1x_n}))^{\odot d-1}}{\Vert Q^q(\lambda_{x_{n-1}x_n}) \odot S_q(G_{\frac{1}{d}}(\lambda_{y_1x_n}))^{\odot d-1} \Vert_1}  \cr
&=\frac{Q^q(\lambda_{x_{n-1}x_n}) \odot (S_q(S_q^{-n}(u_0)))^{\odot d-1}}{\Vert Q^q(\lambda_{x_{n-1}x_n}) \odot (S_q(S_q^{-n}(u_0)))^{\odot d-1} \Vert_1}  \cr
&=\frac{Q^q(\lambda_{x_{n-1}x_n}) \odot (S_q^{1-n}(u_0))^{\odot d-1}}{\Vert Q^q(\lambda_{x_{n-1}x_n}) \odot (S_q^{1-n}(u_0))^{\odot d-1} \Vert_1}
\end{split}
\end{equation*}
where as above the $\odot$-operator denotes the Hadamard product.

In the case $n=0$, i.e. for the edge $(\rho, x_1)$, we simply obtain \[\lambda^u_{\rho x_1}=G_d(S_q(u))=\frac{S_q(u)^{\odot d}}{\Vert S_q(u)^{\odot d} \Vert_1 } \]
as all $d$ edges of the form $(y_i,\rho)$, $y_1, \ldots, y_d \in \partial \{ \rho \} \setminus \{x_1\}$ are pointing towards the root.
\end{proof}
\begin{proof}[Proof of Lemma \ref{lem: forewardConv}] \label{pr: lemforeward}
It is convenient for the proof 
to work with distance on $\Delta^q$ provided 
by the restriction of the Euclidean norm. 
	
The proof rests on the following local uniform contraction estimate. 
\begin{lemma} \label{lem: unifContr}There is a positive 
contraction coefficient $\gamma<1$, and 
there are neighborhoods $U,W $ of the equidistribution such 
that  for all $a\in U$ and for all $w,w'\in W_\tau$ the following locally uniform 
contraction holds in two-norm. 
$$\Vert F_{a}(w)-F_a(w')\Vert_2 \leq \gamma \Vert w-w'\Vert_2
$$
\end{lemma}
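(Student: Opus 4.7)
The plan is to reduce the contraction statement to a local spectral computation at the equidistribution, and then propagate this by continuity and the mean value inequality. The starting observation is that when $a = eq^{\odot(d-1)} = C(q,d)\cdot eq$, the equidistribution $eq$ is a fixed point of $F_a$, since $Q^q eq = \Vert Q\Vert_1\, eq$ (the constant vector is an eigenvector of the symmetric circulant matrix $Q^q$) and the normalization trivializes. Writing $F_a(z) = P((Q^q z)\odot a)$ with $P(x) = x/\Vert x\Vert_1$, a direct calculation using the quotient rule would then give
\[
DF_{C(q,d)\cdot eq}(eq)(v) \;=\; \frac{Q^q v}{\Vert Q\Vert_1}
\]
for every $v \in T_{eq}\Delta^q$, where the correction term arising from the differentiation of $\Vert\cdot\Vert_1$ vanishes because $\langle \mathbf{1}, Q^q v\rangle = \langle Q^q\mathbf{1}, v\rangle = \Vert Q\Vert_1\langle \mathbf{1}, v\rangle = 0$.

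Next I would invoke Proposition \ref{pro: Eigenvalues} to identify the eigenvalues of the restriction of $Q^q/\Vert Q\Vert_1$ to $T_{eq}\Delta^q$ with the ratios $\hat Q(2\pi j/q)/\hat Q(0)$ for $j = 1,\dots,q-1$, and then establish the strict spectral bound $|\hat Q(2\pi j/q)| < \hat Q(0)$ for each such $j$. This uses the standing hypothesis that $Q$ is summable and strictly positive on all of $\Z$: equality in $|\sum_n Q(n)\cos(nk)| \le \sum_n Q(n)$ would force $\cos(nk) \in \{-1,+1\}$ for every $n \in \Z$, which is impossible for $k \in (0,2\pi)$. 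Since $Q^q$ is a symmetric circulant matrix it is self-adjoint, so its operator 2-norm coincides with its spectral radius, and consequently there is some $\gamma_0 < 1$ with $\Vert DF_{C(q,d)\cdot eq}(eq)\Vert_{2\to 2} \le \gamma_0$ on $T_{eq}\Delta^q$.

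For the final step I would use smoothness of the map $(a,z)\mapsto DF_a(z)$ in a neighborhood of $(C(q,d)\cdot eq, eq)$, where the denominator $\Vert (Q^q z)\odot a\Vert_1$ stays bounded away from zero, to select neighborhoods $U$ of $C(q,d)\cdot eq$ and a convex neighborhood $W$ of $eq$ in $\Delta^q$ on which $\Vert DF_a(z)\Vert_{2\to 2} \le \gamma$ for some $\gamma \in (\gamma_0,1)$. With a uniform operator-norm bound on the differential at hand, the mean value inequality applied along the segment between $w$ and $w'$ in $W$ (which lies entirely in $W$ by convexity) yields the desired estimate $\Vert F_a(w) - F_a(w')\Vert_2 \le \gamma\,\Vert w - w'\Vert_2$ uniformly in $a\in U$. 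The main delicate point I expect is the strict spectral bound of the middle paragraph: the weak bound $|\hat Q|\le \hat Q(0)$ is immediate, but the strict inequality at every root of unity $2\pi j/q$ is precisely the place where the full-support assumption on $Q$ enters, and where a potential gap would have to be watched carefully for degenerate transfer operators.
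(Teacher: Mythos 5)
Your argument follows essentially the same route as the paper's own proof: compute $\text{D}F_a[eq]=Q^q/\Vert Q\Vert_1$ on $T_{eq}\Delta^q$ (the paper takes $a=eq$, you take $a=eq^{\odot(d-1)}$, but these give the same map since $F_a$ is invariant under rescaling $a$), bound its $\ell^2$-operator norm by the spectral radius $M<1$ using self-adjointness of the circulant $Q^q$, and propagate the estimate by joint continuity of $(a,z)\mapsto \text{D}F_a[z]$ together with the mean value inequality on a convex neighborhood. Your explicit verification that $M<1$ from strict positivity of $Q$ is a welcome detail the paper leaves implicit; only phrase the equality case a touch more carefully, since what one actually needs is that $\cos(nk)=1$ for all $n$ simultaneously (the case $\cos(nk)=-1$ for all $n$ is already ruled out by $n=0$), which fails for every $k\in(0,2\pi)$.
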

\begin{proof}[Proof of Lemma \ref{lem: unifContr}]
Choose $\gamma$ strictly between 
$M:=\max_{j=1,\dots, q-1}|\lambda_j|$ and $1$, where $\lambda_j$ 
denote the eigenvalues of the symmetric normalized 
transfer operator $\frac{Q^q}{\Vert Q^q\Vert_1}$
acting on the tangent space $\text{T}_{eq}\Delta^q$.  
We have for $z\mapsto F_{eq}(z)=
\frac{Q^q z}{\Vert Q^q z\Vert_{1}}$ that the 
$k$-th component of the differential $\text{D}F_{eq}[eq]$ 
taken in $z=eq$ is given by   
\begin{equation*} 
\begin{split}
\text{D}F_{eq}[eq]_k&=\frac{ Q^q_k}{\Vert Q^q\Vert_{1}} 
\end{split}
\end{equation*}
where the denominator is 
the one-norm of the function $Q^q$. 
Hence $\text{D} F_{eq}[eq]|_{T \Delta^q}=
\frac{Q^q}{\Vert Q^q\Vert_1}$. 
Hence we know that the $l^2$-operator norm of 
$\text{D}F_{eq}[eq]|_{\text{T}_{eq}\Delta^q}$ is equal to $M<1$. 
But as the function $(z,a)\mapsto \text{D}F_{a}[z]$ is jointly continuous, we may 
perturb this estimate and see the following. 
For each $\gamma$ strictly bigger than $M$,  
we may find neighborhoods $U,W $ of the equidistribution such 
that  for all $a\in U$ and for all $z\in W$ the differential satisfies 
the uniform norm-estimate
\[\Vert \text{D} F_{a}[z]\Vert_{Op,l^2(\text{T}_{eq} \Delta^q)} \leq \gamma 
\] But from this follows the desired Lipschitz property for 
the two-norm on $\text{T}_{eq}\Delta^q$. This concludes the proof of the lemma. 
\end{proof}
We continue with the proof of Lemma \ref{lem: forewardConv} and split the relevant quantity 
in two terms 
\begin{equation} \label{eq: splitUC}
\begin{split}
\Vert F_{a_n}F_{a_{n-1}}\dots F_a(z)-eq\Vert_2
&\leq \Vert F_{a_n}F_{a_{n-1}}\dots F_a(z)-  
F_{a_n}F_{a_{n-1}}\dots F_a(eq)\Vert_2 \cr
& \quad +\Vert F_{a_n}F_{a_{n-1}}\dots F_a(eq)-eq \Vert_2
\end{split}
\end{equation}
To be able to apply the local uniform contraction property of Lemma \ref{lem: unifContr}, we first have to ensure that $u$ can be chosen such that for all $n \in \mathbb{N}$ both $F_{a_n}F_{a_{n-1}}\dots F_a(eq)$ and $F_{a_n}F_{a_{n-1}}\dots F_a(z)$ stay sufficiently close to the equidistribution. Afterwards we show that both terms converge to zero as $n$ tends to infinity.
We start with estimating the second term of \eqref{eq: splitUC}. 
	
Let $\varepsilon>0$. We will show by induction on $n \in \mathbb{N}$ that $\Vert F_{a_n}F_{a_{n-1}}\dots F_a(eq)-eq \Vert < \varepsilon$ for all $n \in \mathbb{N}_0$.  As $(a_n)_{n \in \mathbb{N}}$ converges exponentially fast to the equidistribution (since $a_1 \in W_\tau$), continuity of $F$ in $a$ allows to take the starting value $a=a_1$ such that $b_n:=\Vert F_{a_n}(eq) - eq\Vert< (1-\gamma)\varepsilon$ for all $n \in \mathbb{N}$. In particular, this already proves the initial step. The induction step then follows from
\begin{equation}\label{eq: induction1}
\begin{split}
&\Vert F_{a_n}F_{a_{n-1}}\dots F_a(eq)-eq \Vert_2\cr
&\leq \Vert F_{a_n}F_{a_{n-1}}\dots F_a(eq)-F_{a_n}(eq) \Vert_2
+\Vert F_{a_n}(eq) - eq\Vert_2 \cr
&\leq \gamma \Vert F_{a_{n-1}}\dots F_a(eq)-eq \Vert_2
+b_n,
\end{split}
\end{equation}
where in the second inequality we used the induction hypothesis for $n-1$ to apply Lemma \ref{lem: unifContr}.
	
Repeated application of \eqref{eq: induction1} then gives the following estimate on the second term of \eqref{eq: splitUC}:
\begin{equation} 
\begin{split}
&\Vert F_{a_n}F_{a_{n-1}}\dots F_a(eq)-eq \Vert_2 \cr
&\leq \gamma \Vert F_{a_{n-1}}\dots F_a(eq)-eq \Vert_2+b_n \cr 
&\leq \dots \leq \sum_{j=0}^n
\gamma^{n-j} b_j. 
\end{split}
\end{equation} 
Fix $\tilde{\varepsilon}>0$. As $b_n$ converges to zero as $n$ tends to infinity, there is an $m$ such that $b_j\leq \tilde{\varepsilon}$ for all 
$j\geq m$. Then 
\begin{equation} 
\begin{split}
&\sum_{j=0}^n
\gamma^{n-j} b_j =\sum_{j=0}^{m-1}
\gamma^{n-j} b_j + \sum_{j=m}^n
\gamma^{n-j} b_j \cr
&\leq \frac{\gamma^{n-m+1}}{1-\gamma}\sup_{j\geq 0}b_j
+\frac{\tilde{\varepsilon}}{1-\gamma}.
\end{split}
\end{equation} 
Hence, considering $\tilde{\varepsilon} \downarrow 0$, we have $\Vert F_{a_n}F_{a_{n-1}}\dots F_a(eq)-eq \Vert \stackrel{n \rightarrow \infty}{\rightarrow} 0$.
	
It remains to estimate the first term of \eqref{eq: splitUC}.
By the local uniform contraction property of the 
Lemma \ref{lem: unifContr} we have
\begin{equation} \label{eq: ApplUC}
\begin{split}
&\Vert F_{a_n}F_{a_{n-1}}\dots F_a(z)-  
F_{a_n}F_{a_{n-1}}\dots F_a(eq)\Vert_2 \cr
&\leq \gamma \Vert F_{a_{n-1}}\dots F_a(z)-  
F_{a_{n-1}}\dots F_a(eq)\Vert_2 \cr
&\leq \dots \leq \gamma^{n}\Vert z -eq\Vert_2 \stackrel{n \rightarrow \infty}{\rightarrow} 0.
\end{split}
\end{equation}
Here, the condition that $F_{a_n}F_{a_{n-1}}\dots F_a(z)$ stays sufficiently close to the equidistribution again follows from induction on $n$ 
provided that $z$ is taken sufficiently close to the equidistribution. This concludes the proof of Lemma \ref{lem: forewardConv}.  
\end{proof}
\begin{proof}[Proof of Lemma \ref{lem: originAndDistance}] \label{pr: lem6}
Consider an infinite path $\{\rho=x_0,x_1\},\{x_1,x_2\}, \ldots$ where $d(x_n,\rho)=n$. Further assume that in the group representation $x_n=g_1\ldots g_n$ we have $g_1 \neq g_m$ for all $1 < m \leq n$. 
For any $n \in \mathbb{N}$ define a translation $\Theta_{n}: V \rightarrow V \ ; \ \Theta_n(\cdot):=g_1 \ldots g_n \ \cdot $ where $g_1 \ldots g_n$ is the group representation of the vertex $x_n$. Then, for all $m \in \mathbb{N}$, we have $d(\rho,\Theta_{n}(x_m))=n+m$. 
Employing the fact that by Lemma \ref{lem: forewardConv} the boundary law functions $\lambda^u_{x_nx_{n+1}}$ and $\lambda^{\rho, u}_{x_{n+1},x_n}$ converge to the equidistribution as $n$ tends to infinity, we will compare the marginals' representation of the measure $\nu^{\lambda^q}$ along the edge $\{\rho,x_1\}$ to that along the edge $\{x_nx_{n+1}\}=\{\Theta_n(\rho),\Theta_{n}(x_1)\}$ for large $n$. The statement of the lemma then follows by showing that \eqref{eq: NotTheEquidistribution} implies that the marginals distribution along the edge $\{\rho,x_1\}$ is different from the equidistribution.
Take any $j_1,j_2 \in \mathbb{Z}$. Then translation invariance of the measure $\nu^{\lambda^u}$  would imply that
\begin{equation}\label{eq: NonhomQuot}
\frac{\nu^{\lambda^{u}}(\eta_{(\rho,x_1)}=j_1)}{\nu^{\lambda^u}(\eta_{(\rho,x_1)}=j_2)}=\frac{\nu^{\lambda^u}(\eta_{(x_n,x_{n+1})}=j_1)}{\nu^{\lambda^u}(\eta_{(x_n,x_{n+1})}=j_2)}
\end{equation} 
Inserting the statement of Lemma \ref{lem: BondMarginal} for the marginal along an edge 
\begin{equation*}
\begin{split}
\nu^{\lambda^u}(\eta_{(x_n,x_{n+1})}=j)&=Z^{-1}_{(x_n,x_{n+1})}Q(j) \sum_{\bar{i} \in \Z_q} \lambda^u_{x_n x_{n+1}}(\bar{i}+\bar{j})\lambda^u_{x_{n+1}x_n}(\bar{i}) \cr 
&=Z^{-1}_{(x_n,x_{n+1})}Q(j) \langle \lambda^u_{x_nx_{n+1}}, T_{\bar{j}}(\lambda^u_{x_{n+1}x_n}) \rangle
\end{split}
\end{equation*}
into \eqref{eq: NonhomQuot} and letting $n$ tend to infinity where the r.h.s converges to $1$, we arrive at the necessary criterion
\begin{equation*}
\frac{\langle \lambda^u_{\rho,x_1},T_{\bar{j}_1} \lambda^u_{x_1,\rho} \rangle}{\langle \lambda^u_{\rho,x_1},T_{\bar{j}_2} \lambda^u_{x_1,\rho}\rangle}=1.
\end{equation*}
Hence, inserting $\lambda^u_{x_1\rho}=\frac{u^{\odot d}}{\Vert u^{\odot d} \Vert_1}$ and  $\lambda^u_{\rho x_1}=\frac{S(u)^{\odot d}}{\Vert S(u)^{\odot d} \Vert_1}=\frac{Q^q(u^{\odot d})}{\Vert Q^q(u^{\odot d}) \Vert_1 }$  
the measure $\nu^{\lambda^u}$ is proven to be not invariant under $\Theta_{n}$ if there are $\bar{j}_1, \bar{j}_2 \in \Z_q$ such that
\begin{equation}\label{eq: CondCirculant} \langle Q(u^{\odot d})^{\odot d},T_{\bar{j}_1}(u)^{\odot d} \rangle \neq \langle Q(u^{\odot d})^{\odot d},T_{\bar{j}_2}(u)^{\odot d} \rangle. \end{equation}
In this case we may assume that $\bar{j}_2 = \bar{0}$ finishing the proof of the Lemma \ref{lem: originAndDistance}. \end{proof} 
\begin{proof}[Proof of Lemma \ref{lem: TaylorExp}]
\label{pr: lemTaylorExp}
As $W_\tau$ is a $\mathcal{C}^\infty$-manifold, there is a $\mathcal{C}^\infty$-function $h$ defined on a neighborhood $V  \subset \text{T}_{eq}W_\tau$ of $\vec{0} $ such that the assignment $u(v):=eq +v+h(v)$ maps $V$  onto some neighborhood $U \subset W_\tau$ of $eq$. Moreover, the differential $\text{D}h[\vec{0}]$ vanishes.
We will perform a second-order Taylor expansion of the real function \[v \mapsto \langle Q(u(v)^{\odot d})^{\odot d}, (T_{\bar{j}} u(v))^{\odot d}-u(v)^{\odot d}\rangle\] around $\vec{0} \in \text{T}_{eq}W_\tau$.
	
First note that for any vector $w \in \mathbb{R}^q$ and $\vec{1}=\begin{pmatrix} 1, & 1, &\ldots &1 \end{pmatrix}^T \in \mathbb{R}^q$ we have $\langle \vec{1},T_{\bar{j}}w-w \rangle=0$ for all $\bar{j} \in \Z_q$. 
Hence, both the constant term and the first-order term in the expansion will vanish and only the mixed term in the second derivative remains. We have
\begin{equation*}
\text{D}(u^{\odot d})[\vec{0}](v)=dq^{1-d}(v+\text{D}h[\vec{0}](v))=dq^{1-d}v
\end{equation*}
and
\begin{equation*}
\begin{split}
\text{D}(Q^q(u^{\odot d})^{\odot d})[\vec{0}](v)&=dQ^q(eq^{\odot d})^{\odot d-1} \odot Q^q(\text{D}(u^{\odot d}){[\vec{0}]}(v))
\cr 
&=d^2q^{1-d^2}\Vert Q \Vert_1 Q^q(v).
\end{split}
\end{equation*}
Thus, on the neighborhood $V \subset \text{T}_{eq}W_\tau$ of $eq$ it follows
\begin{equation}\label{eq: QuadExp}
\begin{split}
&\langle Q^q(u(v)^{\odot d})^{\odot d}, (T_{\bar{j}} u(v))^{\odot d}-u(v)^{\odot d}\rangle \cr 
&=\frac{1}{2}\text{D}^2(\langle Q^q(u(v)^{\odot d})^{\odot d} \;, \; (T_{\bar{j}} u(v))^{\odot d}-u(v)^{\odot d}\rangle)[\vec{0}](v,v)+O(\Vert v \Vert_1^3)\cr 
&=\frac{1}{2}2 \langle \  \text{D}(Q^q(u^{\odot d})^{\odot d})[\vec{0}](v) \; , \;\text{D}(T_{\bar{j}}u^{\odot d})[\vec{0}](v)- \text{D}(u^{\odot d})[\vec{0}](v) \ \rangle+O(\Vert v \Vert_1^3) \cr 
&=d^3q^{2-d(d+1)}\Vert Q \Vert_1 \langle Q^q(v),T_{\bar{j}}v-v \rangle+O(\Vert v \Vert_1^3). 
\end{split}
\end{equation} 
	
To estimate $\langle Q^q(v),T_{\bar{j}}v-v \rangle$, we may sum over all $\bar{j} \in \Z_q$. 
Then 
\begin{equation*}
\sum_{\bar{j} \in \Z_q}\langle Q^q(v),T_{\bar{j}}v-v \rangle=-q \langle Q^q(v),v \rangle,
\end{equation*}
where the first term vanishes as $v \in \text{T}_{eq}\Delta^q$.
	
Hence 
\begin{equation} \label{eq: ellipticEst}
\begin{split}
\max_{\bar{j} \in \Z_q}\vert\langle 
Q^q(v) , T_j  v- v\rangle \vert &\geq \vert \langle 
Q^q(v) , v\rangle \vert.
\end{split}
\end{equation}
	
Now assume that $\text{T}^{+(-)}_{eq}W_\tau$ has positive dimension. Then $v \in V \cap \text{T}^{+(-)}_{eq}W_\tau$ implies that the r.h.s of \eqref{eq: ellipticEst} is bounded from below by $\tau \Vert v \Vert^2_{1}$.	
Combining \eqref{eq: QuadExp} and \eqref{eq: ellipticEst} thus shows \eqref{eq: TaylorExp}.
This conludes the proof of Lemma \ref{lem: TaylorExp}.
\end{proof}

\begin{proof}[Proof of Proposition \ref{pro: Id}]\label{pr: proId}
Let $\lambda^s \in \mathbb{Z}_s^{\vec{L}}$ and $l^t \in \mathbb{Z}_t^{\vec{L}}$ denote boundary laws obeying the recursions \eqref{eq: RecIn} and \eqref{eq: RecAway} constructed via backwards iteration on the respective local unstable manifolds such that the condition of Lemma \ref{lem: TaylorExp} is fulfilled. Let $q=st$.
Define $\tilde{\lambda}^q \in {[0,\infty)^{\mathbb{Z}_q}}^{\vec{L}}$ and $\tilde{l}^q \in {[0,\infty)^{\mathbb{Z}_q}}^{\vec{L}}$ as the $q$-periodic continuations of $\lambda^s$ and $l^t$, i.e. at any edge $(x,y) \in \vec{L}$ the vector $\tilde{\lambda}_{xy}^q \in [0,\infty)^{\mathbb{Z}_q}$ is defined as the periodic continuation of the vector $\lambda_{xy}^s \in [0,\infty)^{\mathbb{Z}_s}$ and similar for the vector $\tilde{l}^q$.

By construction it follows that $\tilde{\lambda}^q$ and $\tilde{l}^q$ are boundary laws for the fuzzy transfer operator $Q^q$. Moreover,  $\nu^{\tilde{\lambda}^q}=\nu^{\lambda^s}$ and $\nu^{\tilde{l}^q}=\nu^{l^t}$.
Let $x \in \partial \{\rho\}$. If $\nu^{\tilde{\lambda}^q}=\nu^{\tilde{l}^q}$ then from the marginals' representation of Lemma \ref{lem: BondMarginal} at the edge $\{\rho,x\}$ we obtain    
\begin{equation}\label{eq: period}
\langle \tilde{\lambda}^q_{x\rho}, T_{\bar{j}}(\tilde{\lambda}^q_{\rho x}) \rangle=C\langle \tilde{l}^q_{x \rho}, T_{\bar{j}}(\tilde{l}^q_{\rho x}) \rangle
\end{equation}
for any $j \in \Z$ where the constant $C>0$ is independent of $j$ and $\bar{j} \in \Z_q$ denotes the mod-$q$ projection.
As $\tilde{\lambda}^q$ is the periodic continuation of an $s$-periodic vector, the l.h.s. of \eqref{eq: period} is an $s$-periodic function in $j \in \Z$. Similarly, the r.h.s is a $t$-periodic function in $j$. Now $s$ and $t$ were assumed to be coprime, hence the l.h.s is a constant function in $j$. 
In particular, for the boundary law $\lambda^s$ we necessarily have
\begin{equation*}
\langle \tilde{\lambda}^s_{x \rho}, T_{\bar{j}}(\tilde{\lambda}^s_{\rho x})-\tilde{\lambda}_{\rho x}^s \rangle=0
\end{equation*}
at any $\bar{j} \in \Z_s$ which is excluded by the Lemma \ref{lem: TaylorExp}. Hence the measures $\nu^{\lambda^s}$ and $\nu^{l^t}$ must have different marginals along the edge $\{\rho,x\}$ which concludes the proof of Proposition \ref{pro: Id}.
\end{proof}	
\begin{proof}[Proof of Theorem \ref{thm: existence}] \label{pr: thm4}
Recall that the map $S_q$ describes the boundary law equation \eqref{eq: perGenBL} under the assumption of radial symmetry.
	
If the condition \ref{cond: existence} holds true then Proposition \ref{pro: Eigenvalues} describing the spectrum of $\text{D}S_q[eq]$ and Theorem \ref{thm: Chaperon} ensure existence of a local unstable manifold $W_\tau$ for $S_q$ near the equidistribution $eq$. As described in Section \ref{subsec: recEq}, for any starting value $u \in W_\tau$  we then obtain a radially symmetric nonhomogeneous boundary law solution via backwards iteration of the map $S_q$.

By Theorem \ref{thm: Zachary} this boundary law solutions corresponds to a $\Z_q$-valued Markov chain Gibbs measure which by Theorem \ref{thm: DLR-eq} can be mapped to an integer valued gradient Gibbs measure.

At last, Proposition \ref{pro: non-translation} guarantees that lack of invariance under translations of the tree of the constructed boundary law solution carries over to the so-obtained gradient Gibbs measure for uncountably many choices of the starting value $u \in W_\tau$.

This concludes the proof of Theorem \ref{thm: existence}.
\end{proof}
\subsection{Proofs for Section \ref{sec: Applications}}

\begin{proof}[Calculation of Table \ref{fig: theModels}] \label{pr: Fig2}
\mbox{}\\	
\begin{description}
\item[SOS-model:] 	
The calculation of the Fourier transform is elementary as it only involves geometric series. The value of the fraction $\hat{Q}_{\text{SOS},\beta}(\pi)/\hat{Q}_{\text{SOS},\beta}(0)$ then follows immediately. 
In the next step, we calculate the fuzzy transfer operator $Q_{\text{SOS},\beta}^q$. Let $\bar{i} \in \Z_q$ and $i \in \{0, \ldots, q-1\} \cap \bar{i}$. Then we have
\begin{equation*}
\begin{split}
Q_{\text{SOS},\beta}^q(\bar{i}) &=\sum _{j \in \Z}\exp(-\beta \vert i+qj \vert) =\frac{\cosh(\beta(i-\frac{q}{2}))}{\sinh(\beta \frac{q}{2})} 
\end{split}
\end{equation*}	
where in the last step we have used decomposition into geometric series.
\item[Inverse-square model:] To verify the expression for the Fourier-transform of the inverse-square transfer operator 
we compute  its backwards Fourier-transform. For any nonzero integer $j$ 
we have the elementary integral relation $\int_0^{\pi} \cos(k j) (3 k^2 - 6 \pi k + 2 \pi^2) \text{d}k = (6 \pi)/j^2$ 
which is obtained from partially integrating twice.  
From this the result follows.   
				
It remains to calculate the fuzzy transfer operator $Q_{\text{ISq},a}^q$. 
		
Let $\bar{i} \in \Z_q$ and $i \in \{0, \ldots, q-1\} \cap \bar{i}$. Then we have
\begin{equation*}
\begin{split}
Q^q_{\text{ISq},a}(\bar{i})
&=\sum_{j \in \Z}Q_{\text{ISq},a}(i+qj) \cr
&=Q_{\text{ISq},a}(i)+\frac{\beta}{q^2}\sum_{j=1}^{\infty}{ \frac{1}{(j+\frac{i}{q})^2}}+\frac{\beta}{q^2}\sum_{j=1}^{\infty}{\frac{1}{(j-\frac{i}{q})^2}}
\end{split}
\end{equation*}	
which proves the claim.
\end{description}
\end{proof}
\newpage
\begin{proof}[Proof of Theorem \ref{thm: EB}]
\mbox{}\\
\begin{description}
\item[SOS-model:] 
For the SOS-model at any $\beta>0$ we have 
\begin{equation}\label{pr: thmEB}
\frac{\hat{Q}_{\text{SOS},\beta}(k)}{\hat{Q}_{\text{SOS},\beta}(0)}=\frac{e^{2\beta}-2e^\beta+1}{e^{2\beta}-2e^\beta \cos{k}+1}
\end{equation}
which is strictly decreasing with $k$ on $[0,\pi]$.
Moreover, the expression is strictly positive.

Now the condition $\hat{Q}_{\text{SOS},\beta}(k)>(1/d)\hat{Q}_{\text{SOS},\beta}(0)$ of Theorem \ref{thm: existence}
is equivalent to 
\begin{equation} \label{eq: ConcrBoundsSOS}
\cos{k}>d-(d-1)\cosh(\beta).
\end{equation}
From the equation \eqref{eq: ConcrBoundsSOS} we obtain the following:

First, inserting the value $k=\pi$ we arrive at the lower bound $\beta=\arcosh(\frac{d+1}{d-1})$ for existence of non-invariant gradient Gibbs measures (n-t.i. GGM) of all periods $q$.

Second, assuming $\beta$ below this threshold and substituting $k=\frac{2\pi}{q}$, we obtain the minimal period $q(\beta,d)$ for the SOS-model.
\item[Inverse-square model:]
On the other hand, for the inverse square model at parameter $a >0$ we have
\begin{equation} \label{eq: ConcrBoundsIsq}
\frac{\hat{Q}_{\text{ISq},a}(k)}{\hat{Q}_{\text{ISq},a}(0)}=\frac{\frac{a}{2}(k-\pi)^2+1-\frac{a \pi^2}{6}}{1+\frac{a \pi^2}{3}},
\end{equation}
which is strictly decreasing with $k$ on $[0,\pi]$ and has a zero if and only if $a \geq \frac{6}{\pi^2}$.

Depending on the value of the expression \eqref{eq: ConcrBoundsIsq} at $k=\pi$ there are three different regions for the parameter $a$:

If $a <\frac{6}{\pi^2}\frac{d-1}{d+2}$ then $\frac{\hat{Q}_{\text{ISq},a}(\pi)}{\hat{Q}_{\text{ISq},a}(0)}>\frac{1}{d}$. Hence, by monotonicity $\frac{\hat{Q}_{\text{ISq},a}(k)}{\hat{Q}_{\text{ISq},a}(0)}>\frac{1}{d}$ for all $k$ meaning that existence of n-t.i. GGM of any period $q$ is guaranteed by Theorem \ref{thm: existence}.

If $\frac{6}{\pi^2}\frac{d-1}{d+2} \leq a \leq \frac{6}{\pi^2}\frac{d+1}{d-2}$ then $-\frac{1}{d}  \leq \frac{\hat{Q}_{\text{ISq},a}(\pi)}{\hat{Q}_{\text{ISq},a}(0)} \leq \frac{1}{d}$. 
This means  that $\vert \frac{\hat{Q}_{\text{ISq},a}(k)}{\hat{Q}_{\text{ISq},a}(0)} \vert >\frac{1}{d}$ is only possible if $\frac{\hat{Q}_{\text{ISq},a}(k)}{\hat{Q}_{\text{ISq},a}(0)} >\frac{1}{d}$. Solving this inequality for $k\in [0,\pi]$ gives the condition 
\begin{equation} \label{eq: ISqPosEV}
k<\pi-\sqrt{\frac{\pi^2}{3}(1+\frac{2}{d})-\frac{2}{a}(1-\frac{1}{d})}.
\end{equation} Substituting $k=\frac{2\pi}{q}$, we obtain the minimal period $q(a,d)$ for the Inverse square-model in that case.

Finally, if $a>\frac{6}{\pi^2}\frac{d+1}{d-2}$ then $\frac{\hat{Q}_{\text{ISq},a}(\pi)}{\hat{Q}_{\text{ISq},a}(0)} < -\frac{1}{d}$. In this case, existence of $2$-height periodic n-t.i. GGM is guaranteed and also those (positive) parts of the spectrum of $\text{D}S_q$ satisfying \eqref{eq: ISqPosEV} will correspond to height periodic non-t.i. GGM. Moreover, $\frac{\hat{Q}_{\text{ISq},a}(2\pi/3)}{\hat{Q}_{\text{ISq},a}(0)} < -\frac{1}{d}$ if and only if $a>\frac{9}{\pi^2}\frac{d+1}{d-3}$. In this case, also $q$-height periodic n-t.i. GGM for  $q=3$ and hence, by monotonicity, for all $q \geq 3$ exist.
\end{description}	
\end{proof}
\printbibliography
\end{document}